\documentclass[11pt,twoside, a4paper, english, reqno]{amsart}
 \usepackage[foot]{amsaddr}
\usepackage[dvips]{epsfig}
\usepackage{amscd}
\usepackage{amssymb}
\usepackage{amsthm}
\usepackage{amsmath}
\usepackage{mathtools}
\usepackage{latexsym}

\usepackage{upref}
\usepackage{bm}
\usepackage{color}
\usepackage{hyperref}
\usepackage{graphicx}
\usepackage{lipsum}
\usepackage{comment}
\usepackage{mathrsfs}

\hypersetup{linkcolor=blue, colorlinks=true
,citecolor = red}

\theoremstyle{plain}
\newtheorem{Th}{Theorem}[section]
\newtheorem{Lem}[Th]{Lemma}
\newtheorem{Cor}[Th]{Corollary}

\theoremstyle{definition}
\newtheorem{Def}[Th]{Definition}

\def\@setemails{%
  \ifnum\theg@author > 1
\mbox{{\itshape E-mail addresses}:\space}{\ttfamily\emails}. \else\mbox{{\itshape E-mail address}:\space}{\ttfamily\emails}. \fi%
}

\newcommand{\be} {\begin{equation}}
\newcommand{\ee} {\end{equation}}

\newcommand{\rt}{\mathbb{R}^{2}}

\newcommand{\inte}{\int_{\rt}}

\newcommand{\F}{\mathcal{F}}

\newcommand{\dw}{\ {\rm d}\omega}

\newcommand{\R} {\mathbb{R}}

\newcommand{\s} {\mathbb{S}}

\newcommand{\authorfootnotes}{\renewcommand\thefootnote{\@fnsymbol\c@footnote}}%

\def\e{{\text{e}}}

\numberwithin{equation}{section} \allowdisplaybreaks

\title[log-HLS and flow under constraint]{A constrained log-HLS inequality and its associated Wasserstein flow}

\author[M. Ghosh]{Monideep Ghosh$^{\dagger}$}

\author[D. Karmakar]{Debabrata Karmakar$^{\ast}$}

\author[R. Sinha]{Reetaj Sinha$^{\ast}$}

\author[K. Verma]{Kartikey Verma$^{\ast}$}

\email{\ttfamily
monideep@isid.ac.in, 
debabrata@tifrbng.res.in, 
reetaj23@tifrbng.res.in}
\email{\ttfamily kartikey24@tifrbng.res.in
}
\address[$\ast$]{Tata Institute of Fundamental Research,
Centre for Applicable Mathematics,
Post Bag No.\ 6503, GKVK Post Office,
Sharada Nagar, Chikkabommasandra,
Bangalore 560065,
Karnataka, India}

\address[$\dagger$]{Theoretical Statistics and Mathematics Unit,
Indian Statistical Institute,
Delhi Centre,
S.J. Sansanwal Marg,
New Delhi, Delhi 110016,
India}

\keywords{Moser-Onofri-Aubin inequality; Duality; log-HLS inequality;  Chemotaxis; Constrained Keller-Segel system; Minimizing movement scheme; Wasserstein gradient flow}

\subjclass[2020]{35A23 35K65, 35K40,  35Q92}

\begin{document}
\begin{abstract}
We study a log-Hardy-Littlewood-Sobolev (log-HLS) inequality under constraints and a naturally induced Wasserstein flow. The constrained log-HLS inequality is dual, in a  variational sense, to the constrained Trudinger-Moser-Onofri-Aubin inequality. The corresponding gradient-flow formulation leads to a parabolic-elliptic Patlak-Keller-Segel (PKS) system whose global dynamics exhibit a dichotomy between global existence and finite-time blow-up depending on the initial mass. Remarkably, the constraint raises the critical mass threshold of the flow by a factor of two compared with the unconstrained setting. This provides a distinct mechanism for suppressing blow-up from those arising in related formulations of the PKS-system, where blow-up is mitigated through additional effects such as a forcing term, coupling with a fluid equation, or the ambient flow generated by harmonic perturbations of the Newtonian potential.\end{abstract}
\maketitle
{\small\tableofcontents}
\section{Introduction}

Let $\s^2$ denote the two-dimensional unit sphere in $\R^3$ equipped with the round metric, and let ${\rm d}\omega$ be the normalized area measure on $\s^2$ such that $\int_{\s^2} {\rm d}\omega = 1$. Let $\nabla_{\s^2}$ and $\Delta_{\s^2}$ denote the Riemannian gradient and the Laplace–Beltrami operator on $\s^2$, respectively. The classical Trudinger–Moser–Onofri \cite{TruY, M-1970, O-1982} inequality asserts that 
\begin{align} \label{TMO}
 J_{\alpha}(u) :=\alpha \int_{\s^2} |\nabla_{\s^2} u|^2 {\rm d} \omega + 2\int_{\s^2} u {\rm d} \omega -\ln{\left(\int_{\s^2}\e^{2u} {\rm d} \omega\right)} \geq 0,
\end{align}
holds for all $u \in H^1(\s^2)$ and all $\alpha \geq 1$. The boundedness from below of $J_{\alpha}$ goes back to Moser \cite{M-1970}, while Onofri \cite{O-1982}, motivated by the study of the determinant of the conformal Laplacian, established that the sharp lower bound is precisely $0$. Other proofs were also given by Hong \cite{Hong-1986} and Osgood-Phillips-Sarnak \cite{OPS-1988}. The value $\alpha = 1$ is critical in the sense that for $\alpha < 1$, the functional $J_\alpha$ is unbounded from below on $H^1(\s^2)$ \cite{M-1970}. For $\alpha > 1$, the minimum value $0$ is achieved only by constant functions. At the critical value $\alpha = 1$, however, there exist non-trivial minimizers, which are completely classified by the conformal transformations of $\s^2$ \cite{O-1982}: $J_1(u) = 0$ if and only if $u = \frac{1}{2}\ln \mathcal{J}_{\psi} + C$ for some conformal map $\psi \colon \s^2 \to \s^2$ and constant $C \in \R$, where $\mathcal{J}_{\psi}$ denotes the Jacobian determinant of $\psi$.

\medskip
In this article we focus on the logarithmic Hardy–Littlewood–Sobolev inequality of Carlen and Loss \cite{CL-1992} and Beckner \cite{B-1993}: Let $G$ be the normalized Green's function of $-\Delta_{\s^2}$ (see subsection \ref{greens}). Then the functional
\begin{align*}
\mathcal{F}_{\alpha}(\rho): = \int_{\s^2}\rho \ln \rho {\dw} - \frac{1}{\alpha}\int_{\s_2}\int_{\s^2} G(\omega,\eta)\left(\rho(\omega)-1\right)\left(\rho(\eta)-1\right)\dw {\rm d} \eta 
\end{align*}
satisfies $\mathcal{F}_{\alpha}(\rho) \geq 0$ for all probability densities $\rho \in \Gamma$, where
\begin{align*}
\Gamma := \left\{\rho \in L^1_+(\s^2) \ | \ \int_{\s^2} \rho \ln \rho \dw < \infty, \int_{\s^2} \rho \dw=1 \right\}
\end{align*}
if and only if $J_{\alpha}(u) \geq 0$ on $H^1(\s^2)$. In particular, $\inf_{\rho \in \Gamma} \mathcal{F}_{\alpha}(\rho) \geq 0$ if and only if $\alpha \geq 1$, while $\inf_{\rho \in \Gamma} \mathcal{F}_{\alpha}(\rho) = -\infty$ whenever  $\alpha < 1$. 
Carlen, Carrillo and Loss \cite{CCLHLS} gave a short proof of the sharp log-HLS inequality in $\R^2$ via the fast-diffusion flow. Recently Dolbeault and Li \cite{DolIMRN} introduced a generalized version with an external logarithmic potential, covering both the classical and a reverse regime based on an extension of the method proposed in \cite{CCLHLS}.

\medskip

The functional $J_\alpha$ may be regarded as the primal formulation on $H^1(\mathbb{S}^2)$, and $\mathcal{F}_\alpha$ as its dual formulation on $\Gamma$. Formally, the passage from the primal variable $u$ to the dual variable $\rho$ is realised by the Gibbs principle 
 $\rho = \e^{2u} / \int_{\s^2} \e^{2u} \,{\rm d}\omega$. 
 Conversely, given a density $\rho\in\Gamma$, the corresponding primal variable $u$ is recovered as the Newtonian potential
  $u(\cdot) = \int_{\s^2} G(\cdot, \omega)\big(\rho(\omega) - 1\big) \,{\rm d}\omega$ or equivalently by solving $-\Delta_{\s^2} u = \rho - 1$ (see \cite[Theorem 5]{CL-1992}, \cite[Section 3]{SW}, and Section \ref{Gibbs section} below).
  
  \medskip
  
 Another interesting duality principle was established by Agueh, Boroushaki, and Ghoussoub \cite[Theorem 3.1 and Corollary 3.5]{ABG2017} using optimal mass transport. Building on the energy-entropy duality developed by Cordero, Nazaret, and Villani \cite{ENV2004} and generalized by Agueh, Ghoussoub, and Kang \cite{GKGAFA2004}, they established in \cite{ABG2017} a duality formula on bounded balls in $\mathbb{R}^2$, relating an equivalent Onofri functional on $\mathbb{R}^2$ to a renormalized free-energy functional with reference density $\mu_2(x)=[\pi(1+|x|^2)^2]^{-1}$. An earlier attempt by Dolbeault, Esteban, and Jankowiak \cite{DEJ2015} to prove the Onofri inequality via mass transport was restricted to radial functions. As noted in \cite{ABG2017}, this duality reflects a common free-energy formulation underlying both the two-dimensional Moser-Onofri inequality and the critical Sobolev inequalities in higher dimensions. Consequently, the prescribed Gaussian curvature equation and the Yamabe equation are both linked to stationary solutions of the rescaled fast-diffusion equation, with the two-dimensional case formulated on bounded domains subject to Neumann boundary conditions.
 
\medskip

After Moser’s work, Aubin \cite{A-1979} observed that the critical threshold for $\alpha$ can be lowered under a center-of-mass (first-moment) constraint: the functional $J_\alpha$ remains bounded from below on the subspace of $H^1(\mathbb{S}^2)$ consisting of functions satisfying $\int_{\mathbb{S}^2}\omega\,e^{2u}\,d\omega=0$
if and only if $\alpha\ge\frac{1}{2}$.
Chang and Yang \cite{CY-1987} subsequently showed, in their influential work on the prescribed Gaussian curvature problem, that the same center-of-mass constraint also preserves the sharpness of the lower bound near the critical threshold: for all $\alpha$ sufficiently close to $1$, one has $\inf J_\alpha=0$ on the constrained subspace of $H^1(\mathbb{S}^2)$.

\medskip

The sharp form of this result was established nearly three decades later by Gui and Moradifam \cite{GM-2018}. Their work completed a long line of partial results and conjectures originating in Chang and Yang’s article \cite[Prop.~B]{CY-1987}, which were motivated by the search for minimizing conditions on the curvature function in the Gaussian curvature prescription problem on $\mathbb{S}^2$. Significant intermediate progress was achieved by Feldman et al.~ \cite{FFGG-1998}, Gui–Wei \cite{GW-2000}, C.~S.~Lin \cite{Lin1-2000}, and Ghoussoub–Lin \cite{GL-2010}. A comprehensive account of these developments can be found in the monograph of Ghoussoub and Moradifam \cite{GM-2013}.

\medskip

The higher-dimensional analogue of the Onofri inequality was established by Beckner \cite{B-1993}. In the spirit of Aubin’s observation \cite{A-1979}, Beckner's inequality likewise admits an improvement under a first-moment (center-of-mass) constraint \cite{BCYCMP92,CY1995AnnMath}. Determining the sharp lower bound of the associated family of functionals under this constraint has become known as the generalized Chang–Yang conjecture, motivated by the two-dimensional results of Chang and Yang \cite{CY-1987}. The conjecture has attracted considerable attention in recent years; see, for instance, \cite{WeiXu98, ImBec1, ImBec2, ImBec3, ImBec4}. In a very recent work, Gui-Li-Wei-Ye \cite{ImBec5} resolved for axially symmetric functions for every dimension bigger or equal to $ 3$.

\medskip

In this article, one of our major goals is to establish a dual formulation of the Moser-Aubin inequality. Namely, we ask whether the functional $\mathcal{F}_{\alpha}$ is bounded from below over
\begin{align*}
\Gamma_0 := \left\{\rho \in \Gamma \ | \ \int_{\s^2} \omega\rho \ {\rm d}\omega = 0\right
\}
\end{align*}

for $\alpha \geq \frac{1}{2}.$ As we will see, the answer is positive. We prove the following main theorem:

\begin{Th}\label{main1}
For every $\alpha \geq \frac{1}{2},$ $\inf_{ \rho \in \Gamma_0} \mathcal{F}_{\alpha}(\rho) = 0.$ 
Moreover,  the infimum is attained only by the constant function $\rho \equiv 1.$
\end{Th}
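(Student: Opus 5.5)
Since $\mathcal F_\alpha$ is increasing in $\alpha$ (the double integral is nonnegative because $G$ is the positive Green's function on mean-zero data), it suffices to prove $\mathcal F_{1/2}(\rho)\ge 0$ on $\Gamma_0$, with equality only for $\rho\equiv1$. Strictness for $\alpha>1/2$ then follows: equality at such $\alpha$ forces $\mathcal F_{1/2}(\rho)\le\mathcal F_\alpha(\rho)=0$, hence $\rho\equiv1$. Note that $\mathcal F_\alpha(1)=0$ trivially, so only the lower bound and the equality case need work.

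The main tool is the duality between the primal and dual functionals. Given $\rho\in\Gamma_0$, I set $u=\int G(\cdot,\eta)(\rho(\eta)-1)\,{\rm d}\eta$, so that $-\Delta u=\rho-1$ and $\int u=0$. Then
\begin{equation*}
\iint G(\rho-1)(\rho-1)=\int u(\rho-1)\,\dw=\int|\nabla u|^2\,\dw .
\end{equation*}
Applying Jensen (the Gibbs variational principle) with the probability density $\rho$ and the test function $v=2\beta u$ gives
\begin{equation*}
\int\rho\ln\rho\,\dw\ \ge\ \int \rho v\,\dw-\ln\int \e^{v}\,\dw .
\end{equation*}
I choose $\beta=1/\alpha$. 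Then $\int\rho v=2\beta\int u\rho=\frac2\alpha\int|\nabla u|^2$, and writing $w=u/\alpha$ turns the bound into
\begin{equation*}
\mathcal F_\alpha(\rho)\ \ge\ \frac1\alpha\int|\nabla u|^2-\ln\int \e^{2w}=\alpha\int|\nabla w|^2+2\int w-\ln\int \e^{2w}=J_\alpha(w),
\end{equation*}
using $\int w=0$. So the problem reduces to showing $J_\alpha(w)\ge0$. The complication is that $w$ satisfies no center-of-mass constraint $\int\omega\,\e^{2w}=0$; what we know is $\int\omega\rho=0$, which is a different condition.

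I see two ways to handle this, and the choice between them is the main obstacle.

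\textbf{Option (a): conformal renormalisation.} Replace $w$ by a conformal renormalisation, or apply the Gui–Moradifam sharp result. Their result only applies to functions whose measure $\e^{2w}$ is balanced, and $J_\alpha$ is not conformally invariant for $\alpha\neq1$, so I do not expect this route to work.

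\textbf{Option (b): work directly on the dual side (preferred).} Decompose $\rho-1=\sum_{k\ge1}\rho_k$ into spherical harmonics of degree $k$. Since $G$ acts on degree $k$ as multiplication by $1/(k(k+1))$, the constraint $\int\omega\rho=0$ kills exactly the $k=1$ component. Hence
\begin{equation*}
\iint G(\rho-1)(\rho-1)=\sum_{k\ge2}\frac{\|\rho_k\|^2}{k(k+1)}\ \le\ \frac16\,\|\rho-1\|_2^2 ,
\end{equation*}
whereas without the constraint the constant would be $\frac12$. This mirrors Aubin's argument: the gain $\frac12\to\frac16$ is comparable to the gain $\alpha=1\to1/2$.

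The pure $L^2$ bound does not control the entropy for large $\rho$, however. So I would combine it with a concentration-compactness or Aubin-type localisation. Take a partition of unity subordinate to a few caps. The zero-center-of-mass condition guarantees that the mass of $\rho$ is spread over at least two antipodal-ish regions. On each region I apply the local unconstrained log-HLS inequality, with constant halved because each region carries at most roughly half of the mass. The quadratic (Coulomb) term scales like mass squared while the entropy term scales linearly, so this splitting is what doubles the admissible range of $1/\alpha$. The delicate point is to obtain the sharp constant $0$ rather than only a lower bound. For that I plan to show that $\mathcal F_{1/2}$ is bounded below on $\Gamma_0$ and that a minimizer exists, via the compactness furnished by the localisation. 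The Euler–Lagrange equation for the minimizer reads $\rho=\e^{4u+\lambda\cdot\omega}/Z$. Using a Kazdan–Warner / Pohozaev identity, I would show $\lambda=0$, and then apply the classification of the mean-field equation $-\Delta u=\e^{4u}/Z-1$ in the Gui–Moradifam uniqueness regime ($\alpha=1/2$) to conclude $u\equiv0$, i.e.\ $\rho\equiv1$.
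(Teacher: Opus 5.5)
\textbf{There is a genuine gap.} It sits in the step where you propose to show that $\mathcal{F}_{1/2}$ is bounded below on $\Gamma_0$ and has a minimizer ``via the compactness furnished by the localisation''. At the critical value $\alpha=\frac12$ the localisation has no slack, for three reasons.

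First, the splitting estimate leaves no margin. Split $\rho=\rho_1+\rho_2$ along an equator, with masses $m_1+m_2=1$. The unconstrained log-HLS inequality on each piece only gives $2\bigl(\mathcal{E}(\rho_1)+\mathcal{E}(\rho_2)\bigr)\le 2\max\{m_1,m_2\}\,\mathcal{H}(\rho)+C$. Since $2\max\{m_1,m_2\}\ge 1$, nothing is left to absorb the cross term, which is itself only bounded by $\varepsilon\,\mathcal{H}(\rho)+C$. Note also that the constraint does not give ``at most half'' for a cap: it only bounds the mass of a cap of radius $r$ by $1/(1+\cos r)>\frac12$.

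Second, the blow-up analysis that tells you where to localise degenerates. Along a sequence with $\mathcal{F}_\alpha(\rho_n)\to-\infty$, the Gibbs principle gives $\ln\int_{\s^2}\e^{2u_n}\dw\ge(2-\frac1\alpha)\,\mathcal{E}(\rho_n)$, and this coefficient vanishes at $\alpha=\frac12$.

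Third, compactness genuinely fails at $\alpha=\frac12$. Let $\tilde\rho$ be a probability density supported in a small polar cap and concentrating like a conformal bubble. Then $\rho^*(\omega)=\frac12\tilde\rho(\omega)+\frac12\tilde\rho(-\omega)\in\Gamma_0$ and $\mathcal{F}_{1/2}(\rho^*)=\mathcal{F}_1(\tilde\rho)+O(1)$ stays bounded. So sublevel sets of $\mathcal{F}_{1/2}$ on $\Gamma_0$ contain sequences concentrating at two antipodal points. To extract a minimizer you would have to rule out this two-bubble concentration, e.g.\ by showing that $\inf_{\Gamma_0}\mathcal{F}_{1/2}$ lies strictly below the corresponding concentration level. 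Nothing in your plan provides this, and it presupposes knowledge of the very infimum you are trying to compute.

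\textbf{The repair, which is the paper's route, is to reverse your reduction:} prove everything at a fixed $\alpha>\frac12$, then let $\alpha\downarrow\frac12$.
\begin{itemize}
\item For $\alpha>\frac12$ the coefficient $2-\frac1\alpha$ is positive, so Chang--Hang's concentration--compactness applies to $u_n/\sqrt{\mathcal{E}(\rho_n)}$.
\item A Jensen estimate using $G\le-2\ln d+C$ shows that $\int\e^{2u_n}$ stays bounded near any point around which $\rho_n$ carries mass less than $\frac12-\delta$.
\item Together with $\int_{\s^2}\omega\rho_n\dw=0$, this forces $\rho_n\dw\overset{\star}{\rightharpoonup}\frac12(\delta_{\omega_0}+\delta_{-\omega_0})$ along a subsequence.
\item Only then does the hemisphere splitting work, because the local coefficient tends to $\frac{1}{2\alpha}<1$ and leaves room for the cross term.
\end{itemize}
This gives boundedness below, and by the same analysis a minimizer, for every $\alpha>\frac12$. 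Your Euler--Lagrange/Kazdan--Warner/Gui--Moradifam step then identifies the minimizer as $\rho\equiv1$, so $\mathcal{F}_\alpha\ge0$ on $\Gamma_0$. Since $\mathcal{E}(\rho)\le\mathcal{H}(\rho)<\infty$, letting $\alpha\downarrow\frac12$ gives $\mathcal{F}_{1/2}\ge0$.

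The equality case at $\alpha=\frac12$ needs no compactness. Any $\rho\in\Gamma_0$ with $\mathcal{F}_{1/2}(\rho)=0$ is automatically a minimizer, so the Euler--Lagrange equation, Kazdan--Warner and the Gui--Moradifam classification at $\alpha=\frac12$ apply directly. For $\alpha>\frac12$ your monotonicity remark then suffices: $\mathcal{F}_\alpha=\mathcal{F}_{1/2}+(2-\frac1\alpha)\mathcal{E}$ forces $\mathcal{E}(\rho)=0$. Your duality bound $\mathcal{F}_\alpha(\rho)\ge J_\alpha(u/\alpha)$ and the spherical-harmonic $L^2$ estimate are correct but, as you suspected, play no role.
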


We recently learned that Andreasson, Berman, and Svensson independently established this result in the second version of \cite{logHLS}, having originally posed it as a conjecture in the first. While we gratefully acknowledge their precedence, our work offers a distinctly different methodology and context. Specifically, whereas their proof relies on Legendre-Fenchel duality, ours employs a concentration–compactness approach in the spirit of Chang-Hang \cite{CH-2022}. A notable advantage of our method is its potential applicability to problems with higher-moment constraints, as discussed below in the present context, and already demonstrated by Chang and Hang for Moser's inequality. For the classification of optimizers, both works follow the classical framework, utilizing the Kazdan–Warner condition \cite{KW-1971} and the uniqueness theorem of Gui-Moradifam \cite{GM-2018}.

\medskip

In a recent article, Chang and Gui \cite{CG-2023} consider the following constraint-free functional

\begin{align}\label{Chang-GuiIneq}
I_{\alpha}(u) = \alpha\int_{\s^2}|\nabla_{\s^2}u|^2 {\rm d}\omega+2 \int_{\s^2} u {\rm d}\omega -\frac{1}{2}\ln\left[\left(\int_{\s^2}e^{2u}{\rm d}\omega\right)^2-\sum_{i=1}^3\left(\int_{\s^2}\omega_i e^{2u}{\rm d} \omega\right)^2\right],
\end{align}
on $H^1(\mathbb{S}^2)$, and ask for the optimal $\alpha$ such that $I_{\alpha}(u) \geq 0$ for all $u \in H^1(\mathbb{S}^2)$. In this case, the optimal $\alpha$ is actually $\frac{2}{3}$, as opposed to the  anticipated value $\frac{1}{2}.$ 
Moreover, the functional is not bounded from below if $\alpha < \frac{2}{3}.$ In fact, this inequality was our starting point for improving the log-HLS inequality under moment constraints. Our second main result is a dual Chang-Gui inequality, which translates to the log-HLS inequality under moment constraints. 

\medskip

We obtain a Gibbs principle involving $\ln\left[\left(\int_{\s^2}e^{2u}{\rm d}\omega\right)^2-\sum_{i=1}^3\left(\int_{\s^2}\omega_i e^{2u}{\rm d} \omega\right)^2\right]$  instead of $\ln \int_{\s^2} e^{2u}{\rm d} \omega$ (as in the classical Gibbs principle), which helps us prove the following general log-HLS inequality incorporating the first moment:

\begin{Th}[general log-HLS under constraint] \label{main2}
For every $\alpha\geq \frac{2}{3}, \bm{a} \in B(0,1)$ and $\rho\in \Gamma$ satisfying $\int_{\s^2}\omega \rho(\omega) \ {\rm d}\omega = \bm{a}$, we have,
\begin{align*}&\int_{\s^2}\frac{1-\bm{a}\cdot\omega}{1-|\bm{a}|^2}\rho \ln \rho {\dw}-\frac{1}{2}\ln(1-|\bm{a}|^2) \\ \geq & \, \frac{1}{\alpha}\int_{\s_2}\int_{\s^2} G(\omega,\eta)\left(\frac{1-\bm{a}\cdot\omega}{1-|\bm{a}|^2}\rho(\omega)-1\right)\left(\frac{1-\bm{a}\cdot\eta}{1-|\bm{a}|^2}\rho(\eta)-1\right)\dw {\rm d} \eta.
\end{align*}
\end{Th}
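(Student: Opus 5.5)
The plan is to derive Theorem \ref{main2} from the Chang--Gui inequality $I_\alpha\ge 0$ on $H^1(\s^2)$, valid for $\alpha\ge \frac23$ \cite{CG-2023}, by a duality argument. It mirrors the classical equivalence between $J_\alpha\ge0$ and $\mathcal F_\alpha\ge 0$, but uses a ``Lorentzian'' Gibbs principle in place of Jensen's inequality. Throughout, write $w_{\bm a}(\omega)=\frac{1-\bm a\cdot\omega}{1-|\bm a|^2}$ and $\sigma=w_{\bm a}\rho$. The constraint $\int_{\s^2}\omega\rho\dw=\bm a$ gives $\int_{\s^2}\sigma\dw=1$, so $\sigma$ is again a probability density. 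Set $A(u)=\int_{\s^2} e^{2u}\dw$ and $B(u)=\int_{\s^2}\omega e^{2u}\dw$.

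\emph{Step 1 (generalized Gibbs inequality).} I would first prove that for every $\rho\in\Gamma$ with first moment $\bm a\in B(0,1)$ and every $u\in H^1(\s^2)$,
\begin{align*}
\frac12\ln\big(A(u)^2-|B(u)|^2\big)\;\ge\;\int_{\s^2}\sigma\,(2u-\ln\rho)\dw+\frac12\ln(1-|\bm a|^2).
\end{align*}
Equality holds at $u=\frac12\ln\rho$, since then $A=1$ and $B=\bm a$, and both sides shift by the same constant under $u\mapsto u+c$. A first-variation computation at $u=\frac12\ln\rho$ confirms criticality, and a second-order check at $\rho\equiv1$, $u=\varepsilon\omega_1$ gives $\frac49\varepsilon^2\ge0$.

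Note that the direct Lorentz reverse Cauchy--Schwarz bound $A-\bm b\cdot B\ge\sqrt{1-|\bm b|^2}\sqrt{A^2-|B|^2}$ points the wrong way. I would therefore exploit conformal invariance instead. The pair $(A,B)$ transforms as a Lorentz four-vector under Möbius maps $\varphi$ of $\s^2$ acting by $u\mapsto u\circ\varphi+\frac12\ln\mathcal J_\varphi$, so $A^2-|B|^2$ is invariant. Choosing $\varphi$ so that the transformed $u$ has $B=0$ reduces the left side to $\ln A$. Jensen's inequality with respect to the transformed probability density (the push-forward of $\sigma$) then gives $\ln A\ge\int(\text{transformed }\sigma)(2u-\ln\rho)$.

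What remains is to check that the right-hand side $\int\sigma(2u-\ln\rho)\dw+\frac12\ln(1-|\bm a|^2)$ is invariant under the same Möbius change of variables, with $\rho$ transformed as a density and $\bm a$ updated accordingly. The weight $w_{\bm a}$ and the correction term $\frac12\ln(1-|\bm a|^2)$ are exactly of the form produced by the Jacobian of a Möbius map, which is why I expect this to work. This invariance computation is the main obstacle, and I would do it explicitly with $\varphi$ the standard dilation along the axis determined by $B(u)/A(u)$.

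\emph{Step 2 (choice of test function).} Given $\rho$ (first assumed smooth and positive), let $u=\frac1\alpha\int_{\s^2}G(\cdot,\eta)(\sigma(\eta)-1)\,{\rm d}\eta$, so that $-\Delta_{\s^2}u=\frac1\alpha(\sigma-1)$. Writing $Q=\int\!\!\int G(\sigma-1)(\sigma-1)$, we have $\int|\nabla_{\s^2}u|^2\dw=\alpha^{-2}Q$ and $2\int u(\sigma-1)\dw=\frac2\alpha Q$.

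\emph{Step 3 (combining).} Apply $I_\alpha(u)\ge0$ and then Step 1:
\begin{align*}
\frac1\alpha Q+2\int_{\s^2} u\dw\;\ge\;\frac12\ln(A^2-|B|^2)\;\ge\;2\int_{\s^2}\sigma u\dw-\int_{\s^2}\sigma\ln\rho\dw+\frac12\ln(1-|\bm a|^2).
\end{align*}
Moving terms across and using $2\int u(\sigma-1)\dw=\frac2\alpha Q$ gives $\int\sigma\ln\rho\dw-\frac12\ln(1-|\bm a|^2)\ge\frac1\alpha Q$, which is the claimed inequality.

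\emph{Step 4 (approximation).} Finally, I would remove the smoothness and positivity assumptions on $\rho$. Approximate $\rho\in\Gamma$ by mollified densities, adjusted (for instance by a small Möbius correction) to keep the first moment equal to $\bm a$. Pass to the limit using lower semicontinuity of the entropy and continuity of the Green quadratic form, which follows from the log-HLS control of $\rho$ in $L\ln L$.
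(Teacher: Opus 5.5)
\textbf{Verdict: genuine gap in Step 1.} Your Steps 2--3 coincide with the paper's proof of Theorem \ref{GLHLS}. Plugging in $u=\frac1\alpha G\star(\sigma-1)$ directly is cleaner than the paper's appeal to the direct method followed by the Euler--Lagrange equation. Your Step 1 is exactly the inequality of Theorem \ref{GVP} applied to $2u$ and divided by $2$. The problem is that the conformal argument you sketch for Step 1 does not go through, and it fails at two points.

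\emph{First failure: the weight is wrong.} Under $u\mapsto u\circ\varphi+\frac12\ln\mathcal{J}_\varphi$, the measure $e^{2u}{\rm d}\omega$ is simply transported by $\varphi^{-1}$. So $A$ is invariant while $B$ moves, and $A^2-|B|^2$ is \emph{not} invariant. For example, $u\equiv0$ is sent to $\frac12\ln\mathcal{J}_\varphi$, which has $A=1$ but $B=\int\omega\,\mathcal{J}_\varphi\dw\neq0$ for a non-isometric $\varphi$. Centering $B$ this way replaces $\frac12\ln(A^2-|B|^2)$ by the larger quantity $\ln A$, so an inequality for the centered function says nothing about the original one. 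The weight that actually makes $(A,B)$ transform as a Lorentz vector is $\frac34\ln\mathcal{J}_\varphi$. This follows from $1-\omega\cdot\eta=\frac12|\omega-\eta|^2$ and $|\varphi(\omega)-\varphi(\eta)|^2=\mathcal{J}_\varphi(\omega)^{1/2}\mathcal{J}_\varphi(\eta)^{1/2}|\omega-\eta|^2$.

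\emph{Second failure: the boost does not center $\rho$.} Suppose you use the weight $\frac34$ and transform $\rho$ by the same rule, then renormalize. Your right-hand side is then indeed invariant. But the boost that centers $e^{2u}$ does not center $\rho$, so the right-hand side keeps the form $\int\tilde\sigma(2\tilde u-\ln\tilde\rho)\dw+\frac12\ln(1-|\tilde{\bm a}|^2)$ with $\tilde{\bm a}\neq0$. Jensen only gives $\int\tilde\sigma(2\tilde u-\ln\tilde\sigma)\dw$. Closing the difference requires the extra inequality $\int\tilde\sigma\ln(1-\tilde{\bm a}\cdot\omega)\dw\le\frac12\ln(1-|\tilde{\bm a}|^2)$, which your plan does not address. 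Boosting to center $\rho$ instead does not help: Jensen would then bound $\ln A$ from below, which is weaker than bounding the smaller quantity $\frac12\ln(A^2-|B|^2)$.

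\textbf{The missing idea.} Step 1 is plain Jensen on the product space, with no conformal geometry needed.
\begin{itemize}
\item By \eqref{log rewrite} with $2u$ in place of $u$, $A^2-|B|^2=\int\!\!\int(1-\omega\cdot\eta)e^{2u(\omega)+2u(\eta)}\dw\,{\rm d}\eta$.
\item ${\rm d}\pi_\star=(1-\omega\cdot\eta)\dw\,{\rm d}\eta$ is a probability measure on $\s^2\times\s^2$, because $\int_{\s^2}\omega\dw=0$.
\item For $\rho$ with first moment $\bm a$, ${\rm d}\pi=\frac{(1-\omega\cdot\eta)\rho(\omega)\rho(\eta)}{1-|\bm a|^2}\dw\,{\rm d}\eta$ is also a probability measure, with $\frac{{\rm d}\pi}{{\rm d}\pi_\star}=\frac{\rho(\omega)\rho(\eta)}{1-|\bm a|^2}$.
\item Jensen gives $\ln(A^2-|B|^2)\ge\int\!\!\int(2u(\omega)+2u(\eta))\,{\rm d}\pi-\int\!\!\int\ln\frac{{\rm d}\pi}{{\rm d}\pi_\star}\,{\rm d}\pi$.
\item The marginal identity $\int_{\s^2}(1-\omega\cdot\eta)\rho(\eta)\,{\rm d}\eta=1-\bm a\cdot\omega$ turns the right side into $4\int\sigma u\dw-2\int\sigma\ln\rho\dw+\ln(1-|\bm a|^2)$.
\end{itemize}
That is exactly twice your Step 1, and it is the paper's Theorem \ref{GVP}. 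This argument needs no smoothness or positivity of $\rho$, so your approximation Step 4 can also be dropped.
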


In Section \ref{equivalence section}, we will prove that the Chang-Gui inequality \eqref{Chang-GuiIneq} and the above general log-HLS inequality are equivalent. We remark that, as a corollary of Theorem \ref{main2}, the log-HLS functional $\mathcal{F}_{\alpha}$ is bounded from below by $0$ for all $\rho \in \Gamma$ satisfying $\int_{\s^2}\omega \rho(\omega) \ {\rm d}\omega = 0$, provided $\alpha \geq \frac{2}{3}.$ Therefore, Theorem \ref{main1} is stronger than this particular case of Theorem \ref{main2}. Here, the key point is that, when the center of mass is allowed to vary in $B(0,1)$, the optimal value of $\alpha$ is $\frac{2}{3}.$
The proof of Theorem \ref{main1} relies on the concentration-compactness principle in the spirit of Chang and Hang \cite{CH-2022}, which also provides the inspiration for seeking further improvements under higher-moment constraints.

\medskip

Chang and Hang \cite{CH-2022} discovered a family of improved Trudinger-Moser inequalities under higher moment constraints. To state their result, let us recall the following notations from their article. For any non-negative integer $k$, we denote
\begin{align*}
    \mathscr{P}_k &= \{ \text{all polynomials on } \mathbb{R}^3 \text{ with degree at most } k \}; 
    \mathring{\mathscr{P}}_k = \left\{ p \in \mathscr{P}_k : \int_{\mathbb{S}^2} p \, \dw = 0 \right\};  \\
    \mathscr{H}_k &= \{ \text{all degree } k \text{ homogeneous polynomials on } \mathbb{R}^3 \}; \ 
    \mathring{\mathscr{H}}_k = \{ h \in \mathscr{H}_k : \Delta_{\mathbb{R}^3} h = 0 \}. 
\end{align*}
Then 
\begin{equation*}
   \mathscr{P}_k|_{\mathbb{S}^2} = \bigoplus_{i=0}^k \mathring{\mathscr{H}}_i|_{\mathbb{S}^2} \ \ \ \ \mbox{where} \ \ \ \ \mathring{\mathscr{H}}_k|_{\mathbb{S}^2} = \left\{ h|_{\mathbb{S}^2} : h \in \mathring{\mathscr{H}}_k \right\}. 
\end{equation*}
For $m \in \mathbb{N}$, denote
\begin{align}
    \mathcal{N}_m &= \left\{ N \in \mathbb{N} : \ \mbox{there exists} \ x_i \in \mathbb{S}^2 \text{ and } \lambda_i \in (0, 1), \ 1 \leq i \leq N \ \text{s.t. } \ \sum_{i=1}^N \lambda_i = 1 \right. \notag \\     
    &\quad \left. \text{ and for any } p \in \mathring{\mathscr{P}}_m, \sum_{i=1}^N\lambda_i p(x_i) = 0 \right\} 
\end{align}
and define, $N_m = \min \mathcal{N}_m$.
The quantity $N_m$ plays the role of the optimal constant in the Trudinger-Moser inequality of Chang and Hang under constraints: for any $\epsilon > 0$, there exists a constant $c_{\epsilon}$ such that

\begin{equation}
    \log \int_{\mathbb{S}^2} e^{2u} \, {\rm d}\omega \leq \left( \frac{1}{N_m} + \epsilon \right) \int_{\s^2} |\nabla_{\s^2} u|^2 \, {\rm d}\omega+ c_\epsilon,
\end{equation}
holds for all $u \in H^1(\mathbb{S}^2)$ satisfying $\int_{\mathbb{S}^2} u \,  {\rm d}\omega = 0$ and $\int_{\mathbb{S}^2} p e^{2u} \,  {\rm d}\omega = 0$ for every $p \in \mathring{\mathscr{P}}_m.$ In addition, the value $\frac{1}{N_m}$ is optimal in the sense that no such finite $c_{\epsilon}$ exists if one replaces the coefficient of the gradient term by $\frac{1}{N_m} - \epsilon.$ To the best of our knowledge, the exact value of $N_m$ is known only for small values of $m$. In general, however, the lower bound $N_m \geq \left(\left [ \frac{m}{2}\right]+1\right)^2$ is known to hold \cite{Hesse2015}, where $[\kappa]$ denotes the largest integer less than or equal to $\kappa$.

\medskip

Adopting the concentration-compactness principle of Chang and Hang \cite{CH-2022}, we also studied the log-HLS inequality under higher-order constraints. Although we were not able to prove the expected optimal result, we establish the following in this regard. 
\begin{Th} \label{main3}
  Let $\alpha > \frac{1}{\left(\left [ \frac{m}{2}\right]+1\right)^2}$, then  there exists a constant $C(\alpha)> -\infty$ such that $\mathcal{F}_{\alpha}(\rho) > -C(\alpha)$ for every $\rho \in \Gamma$ satisfying $\int_{\s^2}p(\omega) \rho(\omega) \dw = 0$ for every $p \in \mathring{\mathscr{P}}_m$.
\end{Th}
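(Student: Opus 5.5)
The plan is a contradiction argument by concentration–compactness, transplanted to the dual side in the spirit of Chang–Hang \cite{CH-2022}. Throughout set $L:=\left(\left[\frac m2\right]+1\right)^2$, and fix $\rho\in\Gamma$ satisfying the constraints. Let $u=\int_{\s^2}G(\cdot,\omega)(\rho(\omega)-1)\dw$, so that $-\Delta_{\s^2}u=\rho-1$ and $\int_{\s^2}u\dw=0$. Then
\[
\int_{\s^2}\int_{\s^2}G(\rho-1)(\rho-1)=\int_{\s^2}|\nabla_{\s^2}u|^2\dw=\int_{\s^2}u\rho\dw ,
\]
and the classical case $\alpha=1$ gives the a priori bound $\int_{\s^2}|\nabla_{\s^2} u|^2\dw\le\int_{\s^2}\rho\ln\rho\dw$. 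Suppose the claim fails. Then there is a sequence $\rho_k$ in the constraint set with $\mathcal F_\alpha(\rho_k)\to-\infty$, and by the bound above necessarily $\int_{\s^2}\rho_k\ln\rho_k\dw\to\infty$. After passing to a subsequence, $\rho_k\dw\rightharpoonup\nu$ weakly as measures. Since polynomials are continuous, $\nu$ is a probability measure with $\int_{\s^2}p\,{\rm d}\nu=0$ for every $p\in\mathring{\mathscr P}_m$.

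\textbf{Step 1 (no heavy atoms).} I will show that $\nu(\{x\})\le 1/L$ for every $x\in\s^2$. Let $q$ be the reproducing kernel at $x$ of $\mathscr P_{[m/2]}|_{\s^2}$ in $L^2(\dw)$, and put $p=q^2\in\mathscr P_m$. Then $p\ge0$ and $p(x)=\dim\mathscr P_{[m/2]}|_{\s^2}\int_{\s^2}p\dw=L\int_{\s^2}p\dw$. The constraint gives $\int_{\s^2}p\,{\rm d}\nu=\int_{\s^2}p\dw$. Hence $\nu(\{x\})\,p(x)\le\int_{\s^2}p\,{\rm d}\nu=\int_{\s^2}p\dw$, which is the claim. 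By compactness of $\s^2$, $\sup_x\nu(\overline{B(x,2r)})\to\sup_x\nu(\{x\})\le 1/L$ as $r\to0$. So for any $\delta>0$ there is $r>0$ such that, for all large $k$, every ball of radius $2r$ carries $\rho_k$-mass at most $1/L+\delta$.

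\textbf{Step 2 (localized log-HLS).} The goal is an improved inequality for densities that are spread out at scale $r$:
\[
\int_{\s^2}|\nabla_{\s^2} u|^2\dw\le\Big(\frac1L+\delta'\Big)\int_{\s^2}\rho\ln\rho\dw+C(r,\delta).
\]
This holds whenever every $2r$-ball carries mass $\le 1/L+\delta$, with $\delta'\to0$ as $\delta\to0$. Choosing $\delta'$ with $\frac1L+\delta'<\alpha$ then gives $\mathcal F_\alpha(\rho_k)\ge\big(1-(\frac1L+\delta')/\alpha\big)\int_{\s^2}\rho_k\ln\rho_k\dw-C\ge -C$, which is the desired contradiction.

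To prove the estimate, write $G(\omega,\eta)=-\ln|\omega-\eta|+c$ and split $G=G\,\chi_{\{|\omega-\eta|<r\}}+G\,\chi_{\{|\omega-\eta|\ge r\}}$. The far part is bounded by $C(r)$, since the $\rho_k$ are probability densities. For the near part, cover $\s^2$ by balls $B_j=B(x_j,r)$ and take a partition of unity $\sum_j\varphi_j=1$ with $\operatorname{supp}\varphi_j\subset B(x_j,2r)$. Set $m_j=\int_{\s^2}\varphi_j\rho\dw\le\frac1L+\delta$, and apply the sharp log-HLS (Theorem \ref{main1} framework with $\alpha=1$, i.e.\ the Carlen–Loss inequality) to the normalized pieces $\varphi_j\rho/m_j$. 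The diagonal terms are then controlled by
\[
m_j^2\int_{\s^2}\frac{\varphi_j\rho}{m_j}\ln\frac{\varphi_j\rho}{m_j}\dw+C\le\Big(\frac1L+\delta\Big)\int_{\s^2}\varphi_j\rho\ln\rho\dw+C .
\]
Using $\varphi_j\ln\varphi_j\le0$ and summing over $j$ gives exactly the factor $\frac1L+\delta$ in front of the entropy.

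\textbf{Main obstacle.} The difficulty is the near-diagonal cross terms $\int\!\!\int_{|\omega-\eta|<r}(-\ln|\omega-\eta|)\,\varphi_i\rho(\omega)\,\varphi_j\rho(\eta)$ for neighbouring $i\neq j$. A naive Cauchy–Schwarz on the positive-definite kernel multiplies the constant by the overlap number of the cover, which destroys the gain. My first attempt would be to choose the pieces adapted to $\rho_k$ rather than to a fixed cover. One would group $\s^2$ into finitely many regions, each of $\rho_k$-mass at most $\frac1L+\delta$, separated by annular corridors of width $\gg r$. Since the mass in thin annuli can be made small by pigeonholing over radii, the corridors carry only $O(\delta)$ mass. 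This makes the cross interaction between distinct regions bounded, while the corridor mass contributes only $O(\delta)$ times the entropy.

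If that fails, I would instead run the argument on the primal side. I would use $\int_{\s^2}u\rho\dw$ together with the Gibbs inequality $\int_{\s^2}\rho f\dw\le\int_{\s^2}\rho\ln\rho\dw+\ln\int_{\s^2}e^f\dw$ applied locally to each $\varphi_j\rho$, combined with the localized Trudinger–Moser estimates of Chang–Hang for $u$. This reproduces their bookkeeping, where the overlap issue is already handled. The overlap problem is also the reason I only expect the non-sharp threshold $1/L$ rather than $1/N_m$.
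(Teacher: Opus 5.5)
Your Step 1 is correct. It is the same reproducing-kernel argument as the paper's Step 4, and your non-strict bound $\nu(\{x\})\le 1/L$ is what the argument actually gives; it suffices because $\alpha>1/L$.

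\textbf{The gap is Step 2.} The localized bound $\mathcal{E}(\rho)\le(\frac1L+\delta')\mathcal{H}(\rho)+C(r,\delta)$ under the small-ball mass condition is the entire remaining content of the theorem, and you do not prove it.

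The partition-of-unity version fails for the reason you give. If $\rho$ concentrates where $\varphi_1=\varphi_2=\frac12$, half of its self-interaction sits in the $i\neq j$ terms.

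Your corridor repair, as written, has the scales backwards. The regions can only inherit the mass bound $\frac1L+\delta$ from the $2r$-ball hypothesis if they have diameter of order $r$. Corridors of width much larger than $r$ between such regions would then cover most of $\s^2$, and no pigeonholing can make them carry small mass. You also do not say how the interaction between corridor mass and the regions is bounded by $O(\delta)\mathcal{H}(\rho)$. That is exactly the kind of cross term that caused the trouble. (Minor point: in the paper's normalization $G=-2\ln|\omega-\eta|+c$, not $-\ln|\omega-\eta|+c$.)

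\textbf{How to close it.} The corridor idea works at the opposite scale, with thin corridors and cells of size comparable to $r$.
\begin{itemize}
\item \emph{Grid.} Intersect $\s^2$ with a lattice of cubes of side $h$ in $\R^3$, with $h$ small enough that every cell lies in a ball of radius $2r$. For each coordinate $l$, consider the $N$ shifts $a\in\{kh/N\}$. The slab sets $\{\omega : {\rm dist}(\omega_l,a+h\mathbb{Z})<h/(2N)\}$ for different shifts are disjoint, so you can choose one with $\rho$-mass at most $1/N$.
\item \emph{Splitting.} Write $\rho=c+s$, where $s$ is the part in the chosen slabs, with mass $m_s\le 3/N$, and $c=\sum_i\rho_i$ is the part in the cells.
\item \emph{Cross-cell terms.} Distinct cells are at distance at least $h/N$, so $\sum_{i\neq j}\iint G\rho_i\rho_j\le C(h,N)$.
\item \emph{Diagonal terms.} Carlen--Loss applied to $\rho_i/m_i$ gives $\iint G\rho_i\rho_i\le m_i\int\rho_i\ln(\rho_i/m_i)$. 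Your diagonal computation then yields $\iint Gcc\le(\frac1L+\delta)\mathcal{H}(\rho)+C$.
\item \emph{Corridor terms.} $G$ is positive semidefinite, since $\int_{\s^2}G(\omega,\eta)\dw=0$. Hence $2\iint Gsc\le \epsilon\iint Gcc+\epsilon^{-1}\iint Gss$, and $\iint Gss\le m_s\mathcal{H}(\rho)+C$ by Carlen--Loss on $s/m_s$.
\end{itemize}
Taking $\epsilon=N^{-1/2}$ gives $\mathcal{E}(\rho)\le(\frac1L+\delta+O(N^{-1/2}))\mathcal{H}(\rho)+C(r,N)$, which closes your argument.

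\textbf{Comparison with the paper.} Completed this way, your route is a self-contained dual-side proof with no concentration-compactness for $u_n$. The paper instead passes to the primal side globally, with no localized Gibbs inequality:
\begin{itemize}
\item Gibbs gives $\ln\int e^{Mu_n}\ge(M-\frac1\alpha)m_n^2$ for $M>\frac1\alpha$.
\item Chang--Hang's concentration-compactness forces $e^{Mu_n}/\int e^{Mu_n}$ to concentrate at finitely many points.
\item A Jensen bound on $u_n=G\star(\rho_n-1)$ shows that each such point carries $\mu$-mass at least $\frac1M$.
\item The atom bound from Step 1 rules this out for $M<L$.
\end{itemize}
Your second fallback points in that direction, but as stated it is only a pointer.
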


We remark that the dimension of $\oplus_{k=0}^m\mathring{\mathscr{H}}_k := (m+1)^2,$ and the quantity appeared in Theorem \ref{main3} is precisely one over the appropriate dimension. We also remark that for $m = 2,$ the result is indeed almost optimal as in this case $N_2 = 4.$  

\medskip

Next, we study an application of the above improved log-HLS inequality by considering the constrained Wasserstein flow of the associated functional.

\subsection{A constrained Wasserstein flow: Patlak-Keller-Segel system under constraint}

Lifting the functional $\mathcal{F}_{\alpha}$ corresponding to $\alpha = 1$ to $\R^2$ via stereographic projection yields the classical log-HLS functional
\begin{align*}
\F_{\chi}(\rho) = \inte \rho(x) \ln \rho(x) \ {\rm d}x + \frac{\chi}{4\pi} \inte \inte \rho(x) \ln |x-y| \rho(y) \ {\rm d}x{\rm d}y,
\end{align*}
with $\chi = 8\pi.$ We keep the same notation $\rho$ for the densities, which we hope will not cause any confusion to the reader. Since we treat $\s^2$ and $\R^2$ in separate sections, there will be no confusion regarding their domains of definition.

\medskip

It follows from the independent works of Carlen-Loss \cite{CL-1992} and Beckner \cite{B-1993} that $\F_{\chi}$ is bounded from below on the set of all $\rho$ with $\int_{\R^2} \rho = 1$ if and only if $\chi = 8\pi.$ In fact, both articles established their inequality as a limiting case of the HLS inequality in all dimensions. As a consequence of Theorem \ref{main1}, we will deduce in Section \ref{loghls section}, Theorem \ref{loghlsr2} that $\mathcal{F}_{\chi}$ is bounded from below over 
\begin{align*}
\overset{\circ}{\Gamma}&:=\left \{\rho\in L^1_{+}(\R^2):\int_{\R^2}\rho\ln \rho \ {\rm d} x <\infty,\int_{\R^2}|x|^2\rho  \ {\rm d} x  <\infty, \int_{\R^2} \mathcal{S}(x) \rho(x) \ {\rm d} x = 0 \right \}
\end{align*}
 provided $\chi \leq 16\pi$, where $\mathcal{S}(x) = \frac{1}{1 + |x|^2}(2x, |x|^2 - 1)$  is the stereographic projection. 

\medskip

Let $\mathcal{P}_2(\R^2)$ be the set of all Borel probability measures with finite second moment $\int_{\R^2} \vert{}x\vert{}^2 d\mu(x) < \infty,$ and let $\mathcal{P}_{\mbox{\tiny{ac,2}}}(\R^2)$ be the subset of these measures that are absolutely continuous with respect to the Lebesgue measure. Let $W_2(\mu, \nu)$ be the Wasserstein distance between $\mu$ and $\nu \in \mathcal{P}_2(\R^2),$ obtained from the optimal transportation problem with the quadratic cost $\vert{}x - y\vert{}^2$ (see Section \ref{Wflow section}). The formal $W_2$-gradient flow, as discovered by Otto \cite{Otto} (see also \cite{JKO}), is then given by
\begin{align} \label{formal gf}
\partial_t \rho = \nabla \cdot \left(\rho \nabla \mathcal{F}_{\chi}^{\prime}(\rho)\right)
\end{align}
where $ \mathcal{F}_{\chi}^{\prime}(\rho)$ is the first variation of $\mathcal{F}_{\chi}$ with respect to $\rho.$ Equation \eqref{formal gf}
is the the classical Patlak-Keller-Segel system for chemotaxis: 
\begin{equation}\label{KS}
\begin{cases}
\partial_t \rho(x,t) = \Delta \rho - \chi \nabla \cdot \left[\rho(x,t) \nabla u(x,t) \right], \ \ \mbox{in} \ \R^2 \times (0,T) \\ 
u(x,t) := - \frac{1}{2\pi}\int_{\mathbb{R}^2}\ln |x - y| \rho(y,t) {\rm d}y, \ \ \mbox{in} \ \R^2 \times (0,T).
\end{cases}
\end{equation}
We refer to \cite{Pa, KS, Horsurvey2, W1, Horsurvey1} for its origin, biological relevance, and historical motivations of such a systems. One of the interesting facts about \eqref{KS} is that it admits an optimal critical parameter $\chi_{\star} = 8\pi.$ If $\chi \leq 8 \pi,$ there exists a global weak solution to \eqref{KS}, while if $\chi > 8\pi,$ all solutions blow up in finite time \cite{BD, BCM}. This system has been extensively studied, with particular emphasis on phenomena such as the global existence, long-time asymptotic, finite time blow-up profiles,  and the suppression of blow-up through the introduction of additional terms \cite{Childress, NSenba, SeSu, SSbook, Subook, BT, BKLN, BD, BCM, BDP, BCCjfa, CD, FM, GMas, Weietal}. In this article, we focus on this latter topic: the suppression of blow-up.

\medskip
 
 In \cite{HeTadmorKS} (see also \cite{HeTCAMS}), He and Tadmor considered a more realistic scenario which takes into account the flow of the ambient environment induced by a harmonic potential, and thus retaining the elliptic structure as in the original PKS system.
The harmonic perturbation considered is of the form $A (-x_1^2 + x_2^2).$ 
They demonstrate that for symmetric initial configurations where the mass is concentrated sufficiently far from the axis, a suitably large amplitude $A$ ensures global existence, provided $\chi < 16\pi$. The strong ambient field acts as a repelling force that drives the concentrated mass away from the axis, counteracting chemotactic attraction and preventing finite-time blow-up.

 This interesting result motivated us to consider the constrained Wasserstein flow of $\mathcal{F}_{\chi}$ for $\chi \leq 16\pi,$ which preserves the constraint $\int_{\R^2} \mathcal{S}(x) \rho \ {\rm d}x = 0$ along the flow. In Section \ref{Wflow section}, we formally derive such a flow for a general functional. Focusing on the log-HLS functional, we obtain the following PKS-system under constraints:

 \begin{equation} \label{KSc}
\begin{cases}
\partial_t \rho(x,t) = \Delta \rho - \chi \nabla \cdot \left[\rho(x,t) \nabla\left(u(x,t) + \frac{1}{\chi}\bm{\lambda}(t) \cdot \mathcal{S}(x)\right) \right] \\ 
u(x,t) := - \frac{1}{2\pi}\int_{\mathbb{R}^2}\ln |x - y| \rho(y,t) {\rm d}y\\ 
\mathcal{A}[\rho(t)](\bm{\lambda}(t)) = - \bm{b}[\rho(t)], 
\end{cases}
\end{equation}
with initial data $\rho(x,0) = \rho_0(x)$. Here $\mathcal{A}[\rho] \in \mathbb{R}^{3\times 3}$ and $\bm{b}[\rho] \in \R^3$ whose components are given by 
\begin{align*}
\left(\mathcal{A}[\rho]\right)_{ij} = \int_{\mathbb{R}^2}\rho \nabla \mathcal{S}_i \cdot \nabla \mathcal{S}_j \ {\rm d}x,
\end{align*}
\begin{align*} 
 \bm{b}[\rho]_i  
 = -\int_{\R^2}   \rho\Delta \mathcal{S}_i   + \frac{\chi}{4\pi} \int_{\R^2} \int_{\R^2} (\nabla \mathcal{S}_i(x) - \nabla \mathcal{S}_i(y)) \cdot \frac{x - y}{\vert{}x - y\vert{}^2} \rho(x) \rho(y) \,{\rm d}x \,{\rm d}y,
\end{align*}
and $\bm{\lambda}(t)\in \R^3$ is a time dependent Lagrange multiplier. While the system appears complicated, we will see in subsection \ref{pks subsection} that these expressions arise quite naturally in constrained Wasserstein flow. Note that since $\nabla \mathcal{S}_i(x)$ and $\Delta \mathcal{S}_i(x)$ are bounded, $\mathcal{A}[\rho]$ and $\bm{b}[\rho]$ are well defined for integrable densities. In fact, we will show that the matrix $\mathcal{A}[\rho]$ is positive definite for every $\rho$ having finite entropy, ensuring that $\bm{\lambda}(t)$ is uniquely determined by $\rho$.

\medskip

Along the flow \eqref{KSc}, the functional $\F_{\chi}$ is decreasing. Thus, it serves as a Lyapunov functional for the system; moreover, the moment constraint $\int_{\R^2} \mathcal{S}(x) \rho(x,t) \ {\rm d}x = 0$ is preserved.

\medskip

We humbly acknowledge that we are unaware of any natural biological process that is naturally governed by the present constrained system \eqref{KSc}. In particular, the Lagrange multiplier enforcing the constraint depends globally on the density distribution, which does not appear to have an obvious counterpart in standard biological mechanisms. We therefore regard the constraint as a deliberately imposed mathematical device, conceivable perhaps in a controlled or highly engineered setting, and study the resulting system primarily for its intrinsic mathematical interest.

\medskip

In Section \ref{jko section}, we employ the JKO-scheme \cite{JKO} and show that it converges to a weak solution (see Definition \ref{weak formulation}) of the system. We briefly state our result here and refer the reader to Sections \ref{Wflow section} and \ref{jko section} for a detailed discussion.

\begin{Th} \label{main4}
Let $\chi < 16\pi$ and $\rho_0 \in \overset{\circ}{\Gamma}$. Then the PKS-system \eqref{KSc} admits a global weak solution.
\end{Th}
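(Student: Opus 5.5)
We will construct the solution by the constrained JKO (minimizing movement) scheme. Fix a time step $\tau>0$ and set $\rho^0_\tau=\rho_0$. Given $\rho^{n}_\tau\in\overset{\circ}{\Gamma}$, we define $\rho^{n+1}_\tau$ as a minimizer of
\begin{align*}
\rho\mapsto \frac{1}{2\tau}W_2^2(\rho,\rho^n_\tau)+\F_\chi(\rho)
\end{align*}
over $\overset{\circ}{\Gamma}$ with unit mass. We then take the piecewise constant interpolation $\rho_\tau(t)=\rho^{n+1}_\tau$ for $t\in(n\tau,(n+1)\tau]$. The steps are: (i) existence of minimizers for each step; (ii) the discrete Euler--Lagrange equation with a Lagrange multiplier $\bm\lambda^{n+1}$; (iii) uniform estimates; (iv) compactness and passage to the limit $\tau\to0$ in the weak formulation of Definition \ref{weak formulation}.

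\textbf{Step (i).} This rests on Theorem \ref{loghlsr2}, the $\R^2$ version of Theorem \ref{main1}, which gives $\F_{16\pi}\ge -C$ on $\overset{\circ}{\Gamma}$. For $\chi<16\pi$ we split
\begin{align*}
\F_\chi=\Big(1-\frac{\chi}{16\pi}\Big)\int\rho\ln\rho+\frac{\chi}{16\pi}\F_{16\pi}.
\end{align*}
This yields a coercive bound on the entropy, once the negative part of $\int\rho\ln\rho$ is controlled by the second moment. The second moment in turn is controlled by $W_2(\rho,\rho^n_\tau)$. The minimizing sequence is therefore tight and has equi-integrable densities (Dunford--Pettis). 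The constraint $\int\mathcal S\rho=0$ passes to the limit because $\mathcal S$ is bounded and continuous. Lower semicontinuity then follows. The interaction term needs some care: we split $\ln|x-y|$ into a part bounded near the diagonal, handled via the entropy bound (e.g.\ the log-HLS with a small parameter on localized pieces), and a part with at most logarithmic growth, handled by the second moment.

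\textbf{Step (ii).} Here we perturb the minimizer by push-forwards $(\mathrm{id}+\varepsilon\xi)_\#\rho^{n+1}_\tau$. The perturbed density violates the constraint, so we correct it with a three-parameter family of flows generated by $\nabla\mathcal S_i$. The implicit function theorem applies because the Jacobian is exactly $\mathcal A[\rho^{n+1}_\tau]$, which is positive definite for finite-entropy $\rho$, as claimed in the introduction. This yields, for all test fields $\xi$,
\begin{align*}
\int \frac{T-\mathrm{id}}{\tau}\cdot\xi\,\rho^{n+1}=\int\rho^{n+1}\nabla\cdot\xi-\frac{\chi}{4\pi}\iint\frac{(x-y)\cdot(\xi(x)-\xi(y))}{|x-y|^2}\rho\rho+\bm\lambda^{n+1}\cdot\int\rho\nabla\mathcal S\cdot\xi,
\end{align*}
where $T$ is the optimal map from $\rho^{n+1}_\tau$ to $\rho^n_\tau$. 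Testing with $\xi=\nabla\mathcal S_i$ identifies $\bm\lambda^{n+1}$ through $\mathcal A\bm\lambda=-\bm b+O(\text{transport term})$. This is the discrete form of the third equation of \eqref{KSc}.

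\textbf{Steps (iii)--(iv).} The usual energy inequality gives
\begin{align*}
\sum_n\frac{W_2^2(\rho^{n+1},\rho^n)}{2\tau}\le \F_\chi(\rho_0)-\inf\F_\chi,
\end{align*}
so the curves are $\frac12$-Hölder in $W_2$. Together with uniform entropy and second-moment bounds, a refined Arzelà--Ascoli argument gives convergence $\rho_\tau\to\rho$ narrowly, uniformly in time, and weakly in $L^1$. The linear and interaction terms in the weak formulation pass to the limit by the standard argument for the PKS system: the kernel $\frac{(x-y)\cdot(\xi(x)-\xi(y))}{|x-y|^2}$ is bounded and continuous off the diagonal, and the diagonal carries no mass thanks to the equi-integrability.

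\textbf{Main obstacle.} I expect the hardest part to be controlling $\bm\lambda$ uniformly. We need a uniform lower bound on the smallest eigenvalue of $\mathcal A[\rho_\tau(t)]$. The entropy bound prevents mass concentration, and $|\nabla\mathcal S_i|$ only degenerates at infinity, where the second-moment bound controls the mass. From these two facts I would try to derive $\mathcal A\ge c(E,M_2)I$ by contradiction, using compactness of densities with bounded entropy and second moment. With this bound, $\bm\lambda_\tau$ is bounded in $L^2(0,T)$ and hence converges weakly. Since $\mathcal A[\rho_\tau]\to\mathcal A[\rho]$ and $\bm b[\rho_\tau]\to\bm b[\rho]$ strongly in time, the limiting relation $\mathcal A[\rho]\bm\lambda=-\bm b[\rho]$ follows.
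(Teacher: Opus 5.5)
Your overall architecture is the paper's: a constrained JKO scheme over $\overset{\circ}{\Gamma}$, entropy control from $\F_\chi=(1-\frac{\chi}{16\pi})\mathcal H+\frac{\chi}{16\pi}\F_{16\pi}$ together with Theorem \ref{loghlsr2}, correction of perturbations by the flows of $\nabla\mathcal S_j$ via the implicit function theorem with Jacobian $\mathcal A[\rho]$, and the compactness lower bound $\mathcal A\ge c(E,M_2)I$. The gap is in the last step, where you identify the limiting multiplier. Testing your discrete Euler--Lagrange equation with $\xi=\nabla\mathcal S_i$ gives exactly
\[
\mathcal A[\rho^{n+1}_\tau]\,\bm\lambda^{n+1}=-\bm b[\rho^{n+1}_\tau]+R^{n+1},\qquad
|R^{n+1}_i|=\frac1\tau\Bigl|\int_{\R^2}(T(x)-x)\cdot\nabla\mathcal S_i(x)\,\rho^{n+1}_\tau(x)\,{\rm d}x\Bigr|.
\]
Your only control on $R^{n+1}$ is the crude bound $\|\nabla\mathcal S\|_\infty W_2(\rho^{n+1}_\tau,\rho^n_\tau)/\tau$; that is why you only get $\bm\lambda_\tau$ bounded in $L^2(0,T)$. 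With that bound, $R_\tau$ is bounded in $L^2(0,T)$ but need not tend to zero, since it is a discrete velocity tested against $\nabla\mathcal S_i$. Passing to the limit with $\bm\lambda_\tau\rightharpoonup\bm\lambda$, $\mathcal A[\rho_\tau]\to\mathcal A[\rho]$ and $\bm b[\rho_\tau]\to\bm b[\rho]$ therefore only gives $\mathcal A[\rho]\bm\lambda=-\bm b[\rho]+\bar R$, where $\bar R$ is the unidentified weak limit of $R_\tau$. So the third equation of \eqref{KSc}, which is exactly where the constraint enters, is not established by your argument.

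The missing idea is a second-order cancellation that comes from the constraint itself. Since $T_\#\rho^{n+1}_\tau=\rho^n_\tau$ with $T$ optimal, and both densities lie in $\overset{\circ}{\Gamma}$, a Taylor expansion gives
\[
0=\int_{\R^2}\bigl(\mathcal S_i(T(x))-\mathcal S_i(x)\bigr)\rho^{n+1}_\tau(x)\,{\rm d}x
=\int_{\R^2}(T(x)-x)\cdot\nabla\mathcal S_i(x)\,\rho^{n+1}_\tau(x)\,{\rm d}x+O\Bigl(\|D^2\mathcal S\|_\infty\,W_2^2(\rho^{n+1}_\tau,\rho^n_\tau)\Bigr).
\]
Hence $|R^{n+1}|\le C\,W_2^2(\rho^{n+1}_\tau,\rho^n_\tau)/\tau$. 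By your energy inequality, $\int_0^T|R_\tau|\,{\rm d}t\le C\sum_n W_2^2(\rho^{n+1}_\tau,\rho^n_\tau)\le C\tau\to0$. This is precisely the point the paper makes in the last line of \eqref{bexpression}. It lets one work directly with $\bm\lambda_\tau(t):=-\mathcal A[\rho_\tau(t)]^{-1}\bm b[\rho_\tau(t)]$, at the cost of an $O(W_2^2)$ error per step that sums to $O(\tau)$. The uniform convergence of $\mathcal A[\rho_\tau]^{-1}\bm b[\rho_\tau]$ (Lemma \ref{a priori estimates}(d) and Lemma \ref{convergence}(d)) then yields the limit relation for every $t$, with $\bm\lambda$ continuous in time, instead of merely a weak $L^2$ limit. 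With this added, the rest of your argument goes through. Your piecewise-constant interpolation, in place of the paper's geodesic one, is an inessential difference.
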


\medskip

The article is organized as follows. In Section \ref{Gibbs section}, we recall the Gibbs variational principle for entropy and deduce the Gibbs principle incorporating first-moment constraints. In Section \ref{equivalence section}, using this principle, we prove the general log-HLS inequality (Theorem \ref{main2}) and demonstrate its equivalence to the Chang-Gui inequality \cite{CG-2023}, thereby establishing its sharpness. In Section \ref{loghls section}, we study the optimal log-HLS inequality under a first-moment constraint and classify its extremals (Theorem \ref{main1}). Later in the same section, we prove the improved log-HLS inequality under higher-moment constraints, establishing Theorem \ref{main3}. In Section \ref{Wflow section}, we introduce a constrained Wasserstein flow and deduce the constrained PKS system described above. Finally, in Section \ref{jko section}, we implement the minimizing movement scheme (or JKO scheme) to show that the constrained PKS system admits a global weak solution for $\chi < 16\pi$, proving Theorem \ref{main4}.


\section{Gibbs variational principle for entropy} \label{Gibbs section}
The classical Gibbs variational principal for entropy assess that 
\begin{align*}
\int_{\s^2} \rho \ln \rho \dw = \sup_{v \in C^{\infty}(\s^2), \bar v = 0}\left[ \int_{\s^2} \rho v \dw - \ln \left(\int_{\s^2} e^v \dw\right)\right],
\end{align*}
where recall $\dw$ is the normalized measure on $\s^2$. Equivalently, for every $u \in H^1(\s^2),$ the following formula holds
\begin{align*}
\ln \left(\int_{\s^2} e^{u} \ {\rm d}\omega \right) = \sup_{\rho \in L^1_+(\s^2), \ \int_{\s^2} \rho \dw = 1} \left\{ \int_{\s^2} u \rho \dw - \int_{\s^2} \rho\ln \rho \dw\right\}.
\end{align*}

The variational representations above offer a beautiful dual characterization. While the expressions for entropy and free energy were well-known in statistical mechanics from the time of Gibbs and Boltzmann, the identity long remained a piece of analytical folklore within the information theory community. The classical reference is the seminal work of Donsker and Varadhan \cite{DV1975}, which, to the best of our knowledge, provided the first rigorous proof of this identity over general spaces. Without delving into further details, for this particular case, it is not difficult to establish these inequalities purely from a variational point of view. The common strategy is to show that an extremum exists, write down the corresponding Euler-Lagrange equation, and evaluate the functional at the critical point.

\medskip

In this section, we propose a variation of the above Gibbs principle. This variational principle takes into account the centre of mass of the density. Before we proceed let us do some elementary algebra. We write
\begin{align}\label{log rewrite}
&\left[\left(\int_{\s^2} e^{u} \ {\rm d}\omega \right)^2 - \left|\left(\int_{\s^2} \omega e^{u} \ {\rm d}\omega \right) \right|^2\right] \notag\\
=& \int_{\s^2}\int_{\s^2} e^{u(\omega) + u(\eta)}  \ {\rm d}\omega {\rm d}\eta - \int_{\s^2} \int_{\s^2}\omega \cdot \eta e^{u(\omega) + u(\eta)}  \ {\rm d}\omega {\rm d}\eta \notag\\
=&\int_{\s^2}\int_{\s^2} (1 - \omega \cdot \eta)e^{u(\omega) + u(\eta)}  \ {\rm d}\omega {\rm d}\eta.
\end{align}
Let $\mathcal{P}(\s^2\times\s^2)$ be set of all probability measures on the product space $\s^2 \times \s^2$ and define the measure 
\begin{align*}
d\pi_{\star}(\omega, \eta) = (1 - \omega \cdot \eta)\ {\rm d}\omega {\rm d}\eta.
\end{align*}

By Gibbs variational principle on the product spaces \cite[Lemma 2.1]{DV1975} we obtain from \eqref{log rewrite}

\begin{align} \label{gibbs product}
&\ln \left[\left(\int_{\s^2} e^{u} \ {\rm d}\omega \right)^2 - \left|\left(\int_{\s^2} \omega e^{u} \ {\rm d}\omega \right) \right|^2\right] \notag\\
=& \sup_{\pi \in \mathcal{P}(\s^2\times\s^2)} \left[\int_{\s^2}\int_{\s^2} (u(\omega) + u(\eta))\ {\rm d}\pi(\omega, \eta) - \int_{\s^2}\int_{\s^2} \ln \left(\frac{{\rm d}\pi}{{\rm d}\pi_{\star}}\right)\ {\rm d}\pi(\omega, \eta)\right].
\end{align}
Here, the last term is the relative entropy with respect to the measure $\pi_{\star}$  defined by 
\begin{align*}
\int_{\s^2}\int_{\s^2} \ln \left(\frac{{\rm d}\pi}{{\rm d}\pi_{\star}}\right)\ {\rm d}\pi = 
\begin{cases}
\int_{\s^2}\int_{\s^2} f\ln f \ {\rm d}\pi_{\star}, \ \ \mbox{if} \ \pi \ll \pi_{\star}, {\rm d}\pi = f {\rm d}\pi_{\star},\\
+\infty, \ \ \ \hspace{2 cm}  \mbox{otherwise}.
\end{cases}
\end{align*}
We recall $\Gamma := \left\{\rho \in L^1_+(\s^2) \ | \ \int_{\s^2} \rho \ln \rho \dw < \infty, \int_{\s^2} \rho \dw=1 \right\}$ and define the subset with fixed center of mass $\bm{a} \in B(0,1)$ in $\R^3:$
\begin{align*}
\Gamma_{\bm{a}} := \left\{\rho \in \Gamma \ | \  \int_{\s^2} \omega\rho \dw=\bm{a} \right\}.
\end{align*}

For every $\bm{a} \in B(0,1), u \in H^1(\s^2)$ and $\rho \in \Gamma_{\bm{a}}$ we also  define the function with domain $H^1(\s^2)\times \Gamma_{\bm{a}}$ by
\begin{align*}
\Phi_{\bm{a}}(u,\rho) = \frac{1}{1 - |\bm{a}|^2} \left[2\int_{\s^2} (1 - \bm{a} \cdot \omega) u \rho \dw - 2 \int_{\s^2}(1 - \bm{a} \cdot \omega) \rho \ln \rho \dw\right] + \ln (1 - |\bm{a}|^2).
\end{align*}
We also note that for every $\rho \in \Gamma_{\bm{a}},\Phi_{\bm{a}}$ satisfies the following  homogeneity
\begin{align*}
\Phi_{\bm{a}}(u + c,\rho) = \Phi_{\bm{a}}(u,\rho) + 2c, \ \ \mbox{for every} \ c \in \mathbb{R}.
\end{align*}
 
First we prove the following  Gibbs principle.
\begin{Th}[Gibbs variational principle for entropy]\label{GVP}
For every $u \in H^1(\s^2)$ we have
\begin{align*}
\ln \left[\left(\int_{\s^2} e^{u} \ {\rm d}\omega \right)^2 - \left|\left(\int_{\s^2} \omega e^{u} \ {\rm d}\omega \right) \right|^2\right]
= \sup_{\bm{a} \in B(0,1)} \sup_{\rho \in \Gamma_{\bm{a}}} \Phi_{\bm{a}}(u,\rho).
\end{align*}
\end{Th}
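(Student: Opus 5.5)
The plan is to obtain the upper bound $\le$ directly from the product-space Gibbs principle \eqref{gibbs product}, by embedding each pair $(\bm a,\rho)$ into $\mathcal P(\s^2\times\s^2)$. The lower bound $\ge$ will come from one explicit competitor. Throughout, $e^{u}\in L^1(\s^2)$ for $u\in H^1(\s^2)$ by the Trudinger–Moser inequality \eqref{TMO}, so all quantities are finite.

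Set $Z=\int_{\s^2}e^u\dw$ and $\bm m=\int_{\s^2}\omega e^u\dw$. By \eqref{log rewrite}, $Z^2-|\bm m|^2=\int\!\!\int(1-\omega\cdot\eta)e^{u(\omega)+u(\eta)}\dw\,{\rm d}\eta>0$, since $1-\omega\cdot\eta>0$ off the diagonal. In particular $\bm b:=\bm m/Z\in B(0,1)$.

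\emph{Upper bound.} Fix $\bm a\in B(0,1)$ and $\rho\in\Gamma_{\bm a}$. Define
\begin{align*}
{\rm d}\pi(\omega,\eta)=\frac{\rho(\omega)\rho(\eta)}{1-|\bm a|^2}\,{\rm d}\pi_\star(\omega,\eta)=\frac{(1-\omega\cdot\eta)\rho(\omega)\rho(\eta)}{1-|\bm a|^2}\dw\,{\rm d}\eta .
\end{align*}
Its total mass is $\big(1-|\int\omega\rho|^2\big)/(1-|\bm a|^2)=1$, using $\int\rho=1$ and $\int\omega\rho=\bm a$, so $\pi\in\mathcal P(\s^2\times\s^2)$. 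Moreover $\pi\ll\pi_\star$ with density $f=\rho\otimes\rho/(1-|\bm a|^2)$.

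By symmetry, and since $\int(1-\omega\cdot\eta)\rho(\eta)\,{\rm d}\eta=1-\bm a\cdot\omega$,
\begin{align*}
\int\!\!\int (u(\omega)+u(\eta))\,{\rm d}\pi=\frac{2}{1-|\bm a|^2}\int_{\s^2}(1-\bm a\cdot\omega)u\rho\dw .
\end{align*}
In the same way,
\begin{align*}
\int\!\!\int \ln f\,{\rm d}\pi=\frac{2}{1-|\bm a|^2}\int_{\s^2}(1-\bm a\cdot\omega)\rho\ln\rho\dw-\ln(1-|\bm a|^2).
\end{align*}
Here finiteness follows from $\rho\ln\rho\in L^1$ and the boundedness of $1-\bm a\cdot\omega$. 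Hence the bracket in \eqref{gibbs product} evaluated at this $\pi$ equals exactly $\Phi_{\bm a}(u,\rho)$, so $\Phi_{\bm a}(u,\rho)\le\ln(Z^2-|\bm m|^2)$. Taking suprema gives $\le$.

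\emph{Lower bound.} Take $\bm a=\bm b$ and $\rho=e^u/Z$. This $\rho$ is a probability density with center of mass $\bm b$, and $\rho\ln\rho=\rho(u-\ln Z)$ is integrable because $ue^u\in L^1$ (again by Trudinger–Moser, e.g.\ $|u|e^u\le C(e^{2u}+1)$). So $\rho\in\Gamma_{\bm b}$. The $u\rho$ term then cancels, leaving
\begin{align*}
\Phi_{\bm b}(u,\rho)=\frac{2\ln Z}{1-|\bm b|^2}\int(1-\bm b\cdot\omega)\rho\dw+\ln(1-|\bm b|^2)=\ln\big(Z^2(1-|\bm b|^2)\big)=\ln(Z^2-|\bm m|^2).
\end{align*}
This gives $\ge$, and shows the supremum is attained.

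\emph{Main obstacle.} The only genuinely nontrivial step is finding the lift $\pi$ in the upper bound. A direct single-variable Gibbs/Jensen argument only yields the weaker quantity $\ln\big[(Z-\bm a\cdot\bm m)^2/(1-|\bm a|^2)\big]$. By the reverse (Lorentzian) Cauchy–Schwarz inequality $(1-\bm a\cdot\bm b)^2\ge(1-|\bm a|^2)(1-|\bm b|^2)$, this is always $\ge\ln(Z^2-|\bm m|^2)$, i.e.\ in the wrong direction. The constraint $\int\omega\rho=\bm a$ must therefore be used, and it enters precisely through the normalization of $\pi$ and the identity $\int(1-\omega\cdot\eta)\rho(\eta)\,{\rm d}\eta=1-\bm a\cdot\omega$. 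What remains is to check the integrability needed to invoke \cite[Lemma 2.1]{DV1975} for $u\in H^1$ (e.g.\ by approximating $u$ by smooth functions) and to confirm the relative entropy of $\pi$ is finite.
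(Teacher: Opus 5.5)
Your proof is correct and matches the paper's: for $\sup\le$ you lift $(\bm a,\rho)$ to ${\rm d}\pi=(1-\omega\cdot\eta)\rho(\omega)\rho(\eta)/(1-|\bm a|^2)\,{\rm d}\omega\,{\rm d}\eta$ in the product-space Gibbs principle \eqref{gibbs product}, and for $\ge$ you use the competitor $\rho=e^u/Z$ with $\bm a=\bm m/Z$; the paper uses the same competitor after normalizing $Z=1$ by homogeneity. Beyond the paper, you also check that $\pi$ has unit mass, that $|\bm b|<1$, and that $\rho\in\Gamma_{\bm b}$.
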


\begin{proof}
Fix $u \in H^1(\s^2)$. Let $\bm{a} \in B(0,1),$ and $\rho \in \Gamma_{\bm{a}},$ and define  
\begin{align*}
{\rm d}\pi (\omega,\eta) = \frac{1 - \omega \cdot \eta}{1 - |\bm{a}|^2} \rho(\omega)\rho(\eta) \dw {\rm d}\eta.
\end{align*}
We claim that the quantity with in the supremum in \eqref{gibbs product} is precisely the function $\Phi_{\bm{a}}(u,\rho).$ For that goal we simplify the expressions involved:
 \begin{align*}
 \int_{\s^2}\int_{\s^2} (u(\omega) + u(\eta))\ {\rm d}\pi(\omega, \eta) 
 =& \frac{1}{1- |\bm{a}|^2} \int_{\s^2}\int_{\s^2} (u(\omega) + u(\eta))(1 - \omega \cdot \eta)\rho(\omega)\rho(\eta) \dw {\rm d}\eta\\
 =& \frac{2}{1 - |\bm{a}|^2} \int_{\s^2} u(\omega)\rho(\omega) \int_{\s^2} (1 - \omega \cdot \eta) \rho(\eta) \dw {\rm d}\eta \\
 =& \frac{2}{1 - |\bm{a}|^2} \int_{\s^2} (1 - \bm{a} \cdot \omega)u \rho \dw.
 \end{align*}
 On the other hand, the relative entropy can be simplified as follows:
 \begin{align*}
 &\int_{\s^2}\int_{\s^2} \ln \left(\frac{{\rm d}\pi}{{\rm d}\pi_{\star}}\right)\ {\rm d}\pi(\omega, \eta) \\
 =&\frac{1}{1 - |\bm{a}|^2}\int_{\s^2}\int_{\s^2} \ln \left(\frac{\rho(\omega)\rho(\eta)}{1 - |\bm{a}|^2}\right)  (1 - \omega \cdot \eta) \rho(\omega)\rho(\eta) \dw {\rm d}\eta \\
 =&\frac{1}{1 - |\bm{a}|^2}\left[\int_{\s^2}\int_{\s^2}(1 - \omega \cdot \eta) \rho(\omega) \rho(\eta) \Big(\ln \rho(\omega) + \ln \rho(\eta)- \ln (1 - |\bm{a}|^2)\Big)\right]\\
 =& \frac{1}{1 - |\bm{a}|^2}\left[2\int_{\s^2}\int_{\s^2}(1 - \omega \cdot \eta) \rho(\omega)\rho(\eta)\ln \rho(\omega) \dw {\rm d}\eta
 - (1 - |\bm{a}|^2)\ln (1 - |\bm{a}|^2)\right]\\
 =&  \frac{2}{1 - |\bm{a}|^2}\int_{\s^2} \rho(\omega)\ln \rho(\omega) \dw\int_{\s^2}(1 - \omega \cdot \eta)\rho(\eta) {\rm d}\eta - \ln(1 - |\bm{a}|^2)\\
 =& \frac{2}{1 - |\bm{a}|^2}\int_{\s^2} (1 - \bm{a} \cdot \omega )\rho\ln \rho \dw - \ln (1 - |\bm{a}|^2).
 \end{align*}
As a result we deduce 
\begin{align*}
\left[\int_{\s^2}\int_{\s^2} (u(\omega) + u(\eta))\ {\rm d}\pi(\omega, \eta) - \int_{\s^2}\int_{\s^2} \ln \left(\frac{{\rm d}\pi}{{\rm d}\pi_{\star}}\right)\ {\rm d}\pi(\omega, \eta)\right] = \Phi_{\bm{a}}(u, \rho).
\end{align*}
Hence by \eqref{gibbs product} we deduce
\begin{align*}
\ln \left[\left(\int_{\s^2} e^{u} \ {\rm d}\omega \right)^2 - \left|\left(\int_{\s^2} \omega e^{u} \ {\rm d}\omega \right) \right|^2\right]
\geq \sup_{\bm{a} \in B(0,1)} \sup_{\rho \in \Gamma_{\bm{a}}} \Phi_{\bm{a}}(u,\rho).
\end{align*}
holds. To prove the opposite inequality, using the homogeneity of both terms terms and adding a constant, we can assume $\int_{\s^2} e^u \dw = 1.$ Let $\bm{a} = \int_{\s^2}\omega e^u \dw \in B(0,1)$ be the center of mass of $u.$ We set $\rho = e^u.$ Then it is easy to verify that for this choice of $\rho$,
$\Phi_{\bm{a}}(u, \rho) = \ln (1 - |\bm{a}|^2),$ precisely what needed to be proved.
\end{proof}

As an immediate corollary, applying it to the zero center of mass, we derive
\begin{Cor}
For every $u \in H^1(\s^2)$ we have
\begin{align*}
\ln \left[\left(\int_{\s^2} e^{2u} \ {\rm d}\omega \right)^2 - \left|\left(\int_{\s^2} \omega e^{2u} \ {\rm d}\omega \right) \right|^2\right]
\geq \sup_{\rho \in \Gamma_0} \left[ 4 \int_{\s^2} u \rho \dw - 2 \int_{\s^2}\rho \ln \rho \dw\right].
\end{align*}
\end{Cor}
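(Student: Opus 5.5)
The corollary follows from Theorem \ref{GVP} by two substitutions: first replace $u$ by $2u$, then restrict the double supremum to the single slice $\bm{a}=0$. Fix $u \in H^1(\s^2)$ and apply Theorem \ref{GVP} to $2u$, which still lies in $H^1(\s^2)$, and for which $e^{2u}$ is integrable by Moser's inequality. This gives
\begin{align*}
\ln \left[\left(\int_{\s^2} e^{2u} \dw \right)^2 - \left|\int_{\s^2} \omega e^{2u} \dw \right|^2\right]
= \sup_{\bm{a} \in B(0,1)} \sup_{\rho \in \Gamma_{\bm{a}}} \Phi_{\bm{a}}(2u,\rho)
\geq \sup_{\rho \in \Gamma_{0}} \Phi_{0}(2u,\rho).
\end{align*}
The inequality holds simply because $0 \in B(0,1)$.

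Next I evaluate $\Phi_0$ directly from its definition. With $\bm{a}=0$ the weight $1-\bm{a}\cdot\omega$ equals $1$, the prefactor $(1-|\bm{a}|^2)^{-1}$ equals $1$, and $\ln(1-|\bm{a}|^2)=0$. Hence
\begin{align*}
\Phi_0(2u,\rho)=2\int_{\s^2} 2u\,\rho \dw - 2\int_{\s^2}\rho\ln\rho \dw = 4\int_{\s^2} u\rho\dw - 2\int_{\s^2}\rho\ln\rho\dw .
\end{align*}
Substituting this into the previous display gives exactly the claimed inequality.

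The only point needing care is that every quantity is finite and well defined. For $\rho\in\Gamma_0$ the entropy term is finite by the definition of $\Gamma$. The product $u\rho$ is integrable by the Young-type inequality $u\rho \le \rho\ln\rho + e^{u-1}$ together with Moser's inequality. On the left-hand side, the bracket is strictly positive by \eqref{log rewrite}, since $1-\omega\cdot\eta>0$ off the diagonal, so its logarithm makes sense. I do not expect any real obstacle here: the argument is just the specialization of Theorem \ref{GVP}.
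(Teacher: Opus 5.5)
Your proposal is correct and is exactly the paper's argument: apply Theorem \ref{GVP} to $2u$ and keep only the slice $\bm{a}=0$ of the double supremum, where $\Phi_{0}(2u,\rho)=4\int_{\s^2}u\rho\dw-2\int_{\s^2}\rho\ln\rho\dw$. Your extra remarks on integrability of $u\rho$ and positivity of the bracket are sound, though Theorem \ref{GVP} already covers them.
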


\section{Equivalence of Chang-Gui and the General log-HLS inequality: Theorem \ref{main2} } \label{equivalence section}

In this section, we prove a general log-HLS inequality incorporating the center of mass of the density. Naturally, because of this constraint, one would expect an improved log-HLS inequality with a lower value of $\alpha < 1.$ Here, we show that if the center of mass is allowed to take any value in $B(0,1),$ then the best lower bound for $\alpha$ is $\frac{2}{3}.$ In the next section, we further improve this to the optimal value $\frac{1}{2}$ by requiring the center of mass to be at the origin.

\subsection{Green's Function of $-\Delta_{\s^2}$} \label{greens}

We denote the Green's function with respect to the normalized measure as $G(\omega,\eta)$, which satisfies
\begin{align*}
-\Delta_{\s^2}G(\cdot,\eta)=\delta_{\eta}-1
\end{align*}
in the sense of distributions. Thus, for any $\phi\in C^{\infty}(\s^2)$,
\begin{align*}
-\int_{\s^2} G(\omega,\eta)\Delta_{\s^2} \phi(\omega){\rm d}\omega=\phi(\eta)-1.
\end{align*}
Explicitly, $G(\omega,\eta)$ is given by
\begin{align*}
G(\omega,\eta)=-2\ln|\omega-\eta|+ \mbox{const}.
\end{align*}
where the additive constant is fixed by $\int_{\s^2}G(\omega,\eta) \dw=0$.

\subsection{General and Improved Logarithmic Hardy-Littlewood-Sobolev Inequality}
\begin{Th}[General Logarithmic Hardy-Littlewood-Sobolev Inequality]\label{GLHLS}
For every $\alpha\geq \frac{2}{3}$ and $\rho\in \Gamma_{\bm{a}}$, we have,
\begin{align} \label{gloghls}
&\int_{\s^2}\frac{1-\bm{a}\cdot\omega}{1-|\bm{a}|^2}\rho \ln \rho {\dw}-\frac{1}{2}\ln(1-|\bm{a}|^2) \notag \\ 
\geq &\frac{1}{\alpha}\int_{\s_2}\int_{\s^2} G(\omega,\eta)\left(\frac{1-\bm{a}\cdot\omega}{1-|\bm{a}|^2}\rho(\omega)-1\right)\left(\frac{1-\bm{a}\cdot\eta}{1-|\bm{a}|^2}\rho(\eta)-1\right)\dw {\rm d} \eta.
\end{align}  
\end{Th}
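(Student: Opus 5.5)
Fix $\bm{a}\in B(0,1)$ and $\rho\in\Gamma_{\bm{a}}$, and set
\begin{align*}
\sigma(\omega):=\frac{1-\bm{a}\cdot\omega}{1-|\bm{a}|^2}\rho(\omega).
\end{align*}
Since $\int_{\s^2}\omega\rho\dw=\bm{a}$, we get $\int_{\s^2}\sigma\dw=(1-|\bm{a}|^2)/(1-|\bm{a}|^2)=1$. So $\sigma$ is a probability density, and the right-hand side of \eqref{gloghls} is $\frac1\alpha\langle G*(\sigma-1),\sigma-1\rangle$. I would prove the inequality by duality, combining the Gibbs principle of Theorem \ref{GVP} with the Chang--Gui inequality \eqref{Chang-GuiIneq}. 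That inequality holds for $\alpha\ge\frac23$ by \cite{CG-2023}, and I take it as given.

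\textbf{Step 1: the inequality from the Gibbs principle.} Apply Theorem \ref{GVP} to $2u$ with this fixed $\bm{a}$ and $\rho$. Dividing by $2$ gives, for every $u\in H^1(\s^2)$,
\begin{align*}
\frac12\ln\Big[\Big(\int_{\s^2} e^{2u}\dw\Big)^2-\Big|\int_{\s^2}\omega e^{2u}\dw\Big|^2\Big]\ \ge\ 2\int_{\s^2}\sigma u\dw-\int_{\s^2}\sigma\ln\rho\dw+\frac12\ln(1-|\bm{a}|^2).
\end{align*}
Note that $\int_{\s^2}\sigma\ln\rho\dw$ is exactly the entropy-type term on the left of \eqref{gloghls}.

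\textbf{Step 2: combine with Chang--Gui.} By \eqref{Chang-GuiIneq}, the left-hand side of Step 1 is at most $\alpha\int_{\s^2}|\nabla_{\s^2} u|^2\dw+2\int_{\s^2}u\dw$. Combining the two bounds yields
\begin{align*}
\int_{\s^2}\sigma\ln\rho\dw-\frac12\ln(1-|\bm{a}|^2)\ \ge\ 2\int_{\s^2}(\sigma-1)u\dw-\alpha\int_{\s^2}|\nabla_{\s^2} u|^2\dw
\end{align*}
for every admissible $u$.

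\textbf{Step 3: optimize in $u$.} The optimal choice is $u=\frac1\alpha\,G*(\sigma-1)$, i.e.\ $-\alpha\Delta_{\s^2}u=\sigma-1$. With it,
\begin{align*}
2\int_{\s^2}(\sigma-1)u\dw-\alpha\int_{\s^2}|\nabla_{\s^2} u|^2\dw=\frac1\alpha\iint_{\s^2\times\s^2} G(\omega,\eta)(\sigma(\omega)-1)(\sigma(\eta)-1)\dw\,{\rm d}\eta,
\end{align*}
which is exactly the right-hand side of \eqref{gloghls}.

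\textbf{Main obstacle: regularity of $u$.} The chosen $u$ need not lie in $H^1(\s^2)$: we only know $\sigma\in L\log L$, which puts $\sigma-1$ in $H^{-1}$ only marginally. To handle this I would truncate and mollify. Take $\sigma_k=\min(\sigma,k)$ and renormalize, or equivalently apply the argument to smooth approximations $\rho_k$ of $\rho$ with center of mass $\bm{a}$. Then pass to the limit, using:
\begin{itemize}
\item lower semicontinuity (in fact monotone convergence) of the entropy term;
\item convergence of the Green energy along truncations, since the logarithmic kernel is controlled by the entropy via Jensen and Young's inequality $st\le e^{s}+t\ln t$.
\end{itemize}
If the quadratic form turns out to be $+\infty$, the inequality needs a separate check. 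This is excluded because the log-HLS energy is finite whenever the entropy is finite, by the classical log-HLS inequality. Alternatively, it suffices to run Steps 1--2 with a sequence $u_k\in H^1(\s^2)$ approximating the supremum of $u\mapsto 2\langle\sigma-1,u\rangle-\alpha\|\nabla_{\s^2} u\|_2^2$, which equals the Green energy by Hilbert-space duality.
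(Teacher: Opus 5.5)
Your proposal is correct and follows essentially the same route as the paper. You apply Theorem~\ref{GVP} to $2u$, combine it with the Chang--Gui inequality, and then maximize $2\int_{\s^2}(\sigma-1)u\dw-\alpha\int_{\s^2}|\nabla_{\s^2}u|^2\dw$ over $u\in H^1(\s^2)$, whose maximizer solves $-\alpha\Delta_{\s^2}u=\sigma-1$; the paper obtains this maximizer by the direct method rather than by Hilbert-space duality. Your regularity worry is milder than you suggest: Steps 1--2 already bound this functional uniformly in $u$, which forces $\sigma-1\in H^{-1}$ and makes $\frac1\alpha G\star(\sigma-1)$ an admissible $H^1$ maximizer. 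So no truncation is needed, and truncation would in any case break the center-of-mass constraint.
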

\begin{proof}
By Chang-Gui inequality \cite[Theorem 2.1]{CG-2023} and Gibbs variational principle for entropy Theorem \ref{GVP}, for $\alpha\geq \frac{2}{3}$, 
\begin{align*}
&\alpha\int_{\s^2}|\nabla_{\s^2} u|^2{\rm d} \omega+2\int_{\s^2}u{\rm d}\omega\\
\geq &\frac{1}{2}\ln\left[\left(\int_{\s^2} \e^{2u}{\rm d} \omega\right)^2-\left|\int_{\s^2}\omega \e^{2u}{\rm d}\omega\right|^2\right]\\
\geq&
\frac{1}{1-|\bm{a}|^2}\left[2\int_{\s^2}u\rho(1-\bm{a}\cdot \omega){\rm d} \omega-\int_{\s^2}(1-\bm{a}\cdot\omega )\rho\ln \rho {\rm d}\omega\right]+\frac{1}{2}\ln(1-|\bm{a}|^2).
\end{align*}
for every $\rho\in \Gamma_{\bm{a}}$.
Hence,
\begin{align*}
\int_{\s^2} \left[\frac{1-\bm{a}\cdot\omega}{1-|\bm{a}|^2}\rho\ln\rho\dw\right]-\frac{1}{2}\ln(1-|\bm{a}|^2)\geq 2\int_{\s^2}u\left[\rho\frac{1-\bm{a}\cdot \omega}{1-|\bm{a}|^2}-1\right]{\dw}-\alpha\int_{\s^2}|\nabla u|^2\dw. 
\end{align*}

Since L.H.S. is independent of $u$, the R.H.S. is uniformly bounded above for every $u\in H^1(\s^2)$.  Moreover, when $\|u\|_{H^1(\s^2)}\to \infty$, the R.H.S. tends to $ -\infty$. Therefore by direct method in calculus of variation we conclude that R.H.S. admits a maximizer in $H^1(\s^2)$, that we denote by $u$ itself. 

 \medskip
 
We set $\hat{\rho}(\omega)=\frac{1-\bm{a}\cdot\omega}{1-|\bm{a}|^2}\rho$. Then the Euler-Lagrange equation for $u$ satisfies
\begin{align*}
-\alpha\Delta_{\s^2} u= \hat{\rho}-1.
\end{align*}
Hence the maximum value of the R.H.S. is  
\begin{align*}
\frac{1}{\alpha}\int_{\s^2}\int_{\s^2} G(\omega,\eta)(\hat{\rho}(\omega)-1)(\hat{\rho}(\eta)-1){\rm d}\omega {\rm d}\eta,
\end{align*}
completing the proof. 
\end{proof}
As a direct corollary we have the following log-HLS inequality under constraints:
\begin{Cor}[Improved Logarithmic Hardy-Littlewood-Sobolev Inequality]\label{ILHLS}
For every $\alpha\geq \frac{2}{3}$ and $\rho\in \Gamma_0$, we have,
\begin{align*}
\int_{\s^2}\rho \ln \rho {\dw}
\geq \frac{1}{\alpha}\int_{\s_2}\int_{\s^2} G(\omega,\eta)\left(\rho(\omega)-1\right)\left(\rho(\eta)-1\right)\dw {\rm d} \eta.
\end{align*}  

\end{Cor}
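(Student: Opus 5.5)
This is a direct specialization of Theorem \ref{GLHLS} to the case $\bm{a}=0$. We take $\rho\in\Gamma_0$, so $\rho\in\Gamma_{\bm{a}}$ with $\bm{a}=0\in B(0,1)$, and apply inequality \eqref{gloghls} with any $\alpha\geq\frac{2}{3}$. Setting $\bm{a}=0$ simplifies every term:
\begin{itemize}
\item the weight $\frac{1-\bm{a}\cdot\omega}{1-|\bm{a}|^2}$ becomes identically $1$;
\item the correction term $-\frac{1}{2}\ln(1-|\bm{a}|^2)$ vanishes;
\item $\hat\rho=\frac{1-\bm{a}\cdot\omega}{1-|\bm{a}|^2}\rho$ reduces to $\rho$ itself.
\end{itemize}
The left side of \eqref{gloghls} is then $\int_{\s^2}\rho\ln\rho\dw$. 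The right side is $\frac{1}{\alpha}\int_{\s^2}\int_{\s^2}G(\omega,\eta)(\rho(\omega)-1)(\rho(\eta)-1)\dw\,{\rm d}\eta$, which is exactly the claimed inequality.

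For self-containedness, the same conclusion can also be traced through the chain used in the proof of Theorem \ref{GLHLS}, now with $\bm{a}=0$. We combine the Chang--Gui inequality \cite[Theorem 2.1]{CG-2023} with the Corollary to Theorem \ref{GVP} at zero center of mass. This gives, for every $u\in H^1(\s^2)$,
\begin{align*}
\int_{\s^2}\rho\ln\rho\dw\geq 2\int_{\s^2}u(\rho-1)\dw-\alpha\int_{\s^2}|\nabla_{\s^2}u|^2\dw .
\end{align*}
Here the term $2\int_{\s^2}u\dw$ is absorbed using $\int_{\s^2}\rho\dw=1$. We then take the supremum over $u$. By strict concavity and coercivity, the supremum is attained at the solution of $-\alpha\Delta_{\s^2}u=\rho-1$, namely $u=\frac{1}{\alpha}\int_{\s^2}G(\cdot,\eta)(\rho(\eta)-1)\,{\rm d}\eta$. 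At this $u$ the right-hand side equals $\frac{1}{\alpha}\iint G(\rho-1)(\rho-1)$.

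There is no real obstacle in either route. The only point to verify is that the maximizing $u$ is admissible, i.e.\ lies in $H^1(\s^2)$, and that the double integral is finite. Both hold for $\rho$ with finite entropy: $\rho\in L\ln L$ implies, via the logarithmic kernel and the Moser--Trudinger/Orlicz duality, that $G*(\rho-1)\in H^1$. This was already used implicitly in Theorem \ref{GLHLS}, so the corollary follows at once.
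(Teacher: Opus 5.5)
Your proposal is correct and is exactly the paper's argument: the corollary is obtained from Theorem \ref{GLHLS} by taking $\bm{a}=0$, so that the weight $\frac{1-\bm{a}\cdot\omega}{1-|\bm{a}|^2}$ becomes $1$ and the term $-\frac{1}{2}\ln(1-|\bm{a}|^2)$ vanishes. Your second route (Chang--Gui combined with the zero-center-of-mass Gibbs corollary, then inserting $u=\frac{1}{\alpha}G\star(\rho-1)$) just retraces the proof of Theorem \ref{GLHLS} at $\bm{a}=0$. Note that the concave functional in $u$ is invariant under adding constants, so it is strictly concave only modulo constants; plugging this $u$ in directly avoids any need for a maximizer, and $u\in H^1(\s^2)$ by finiteness of the energy.
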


\subsection{An Equivalence}

Next we show that the Chang-Gui inequality \cite[Theorem 2.1]{CG-2023} and the log-HLS inequality given in Theorem \ref{GLHLS} are equivalent. In particular, it will prove that  $\alpha = \frac{2}{3}$ is optimal. 

\begin{Th}\label{equivalent}
The following are equivalent:
\begin{itemize}
\item[(a)] Chang-Gui Inequality:
For $\alpha\geq \frac{2}{3}$ and $u \in H^1(\s^2)$, 
\begin{align*}
\alpha\int_{\s^2}|\nabla_{\s^2} u|^2\dw + 2\int_{\s^2}u \dw-\frac{1}{2}\ln\left[\left(\int_{\s^2}\e^{2u}\dw\right)^2-\left|\int_{\s^2}\omega \e^{2u}\dw\right|^2\right]\geq 0.
\end{align*}
\item[(b)] General log-HLS inequality \eqref{gloghls} holds, for every $\bm{a} \in B(0,1).$
\end{itemize}
\end{Th}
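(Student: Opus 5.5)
The plan is to prove the two implications separately. The implication (a)$\Rightarrow$(b) is exactly the argument already given in the proof of Theorem \ref{GLHLS}. There, the Chang--Gui inequality combined with the Gibbs principle (Theorem \ref{GVP}) yields, for every $u\in H^1(\s^2)$ and $\rho\in\Gamma_{\bm a}$,
\begin{align*}
\int_{\s^2}\hat\rho\ln\rho\dw-\tfrac12\ln(1-|\bm a|^2)\ \ge\ 2\int_{\s^2}u(\hat\rho-1)\dw-\alpha\int_{\s^2}|\nabla_{\s^2}u|^2\dw ,
\end{align*}
where $\hat\rho=\frac{1-\bm a\cdot\omega}{1-|\bm a|^2}\rho$. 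Maximizing the right-hand side over $u$ gives the Green's function expression in \eqref{gloghls}. So for this direction we only cite that proof.

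For (b)$\Rightarrow$(a), we run the duality backwards with a specific test density. Fix $\alpha\ge\frac23$ and $u\in H^1(\s^2)$.

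Step 1 (normalization). Both sides of (a) shift by $2c$ under $u\mapsto u+c$, so we may assume $\int_{\s^2}e^{2u}\dw=1$.

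Step 2 (choice of density). Set $\rho=e^{2u}$ and $\bm a=\int_{\s^2}\omega e^{2u}\dw$.
\begin{itemize}
\item Since $e^{2u}>0$ a.e. is not a Dirac mass, strict convexity of the ball gives $|\bm a|<1$.
\item The Moser--Trudinger inequality gives $e^{2u}\in L^p$ for all $p$. Hence $\rho\ln\rho=2ue^{2u}$ is integrable and $\rho\in\Gamma_{\bm a}$.
\item We have $\int_{\s^2}(1-\bm a\cdot\omega)\rho\dw=1-|\bm a|^2$, so $\hat\rho$ is again a probability density, and $\hat\rho\in L^p$.
\end{itemize}

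Step 3 (elementary duality bound). For every $v\in H^1(\s^2)$,
\begin{align*}
2\int_{\s^2}v(\hat\rho-1)\dw-\alpha\int_{\s^2}|\nabla_{\s^2}v|^2\dw\ \le\ \frac1\alpha\int_{\s^2}\int_{\s^2}G(\omega,\eta)(\hat\rho(\omega)-1)(\hat\rho(\eta)-1)\dw{\rm d}\eta .
\end{align*}
To prove this, let $w=\frac1\alpha\int_{\s^2}G(\cdot,\eta)(\hat\rho(\eta)-1){\rm d}\eta$, which solves $-\alpha\Delta_{\s^2}w=\hat\rho-1$. Since $\hat\rho\in L^p$, elliptic regularity gives $w\in H^1(\s^2)$. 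Expanding $\alpha\int_{\s^2}|\nabla_{\s^2}(v-w)|^2\dw\ge0$ gives exactly the bound, and the right-hand side equals $\alpha\int_{\s^2}|\nabla_{\s^2}w|^2\dw$.

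Step 4 (plug in). Take $v=u$ in Step 3 and combine with (b). Using $\ln\rho=2u$, the left-hand side of \eqref{gloghls} is $2\int_{\s^2}u\hat\rho\dw-\frac12\ln(1-|\bm a|^2)$. Therefore
\begin{align*}
2\int_{\s^2}u\hat\rho\dw-\tfrac12\ln(1-|\bm a|^2)\ \ge\ 2\int_{\s^2}u\hat\rho\dw-2\int_{\s^2}u\dw-\alpha\int_{\s^2}|\nabla_{\s^2}u|^2\dw .
\end{align*}
The terms $2\int_{\s^2}u\hat\rho\dw$ cancel. This leaves
\begin{align*}
\alpha\int_{\s^2}|\nabla_{\s^2}u|^2\dw+2\int_{\s^2}u\dw\ \ge\ \tfrac12\ln(1-|\bm a|^2)=\tfrac12\ln\Big[\Big(\int_{\s^2}e^{2u}\dw\Big)^2-\Big|\int_{\s^2}\omega e^{2u}\dw\Big|^2\Big],
\end{align*}
where the last equality uses the normalization. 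This is (a).

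The main obstacle is making Steps 2 and 3 rigorous: verifying that $e^{2u}$ lies in $\Gamma_{\bm a}$ with $|\bm a|<1$, and that $\hat\rho-1$ lies in $H^{-1}$ so that the completing-the-square identity is legitimate. Both follow from the Moser--Trudinger integrability $e^{2u}\in\bigcap_{p<\infty}L^p$. If needed, one can first take $u$ smooth and then pass to the limit by density, since both sides of (a) are continuous in $H^1(\s^2)$. The optimality remark, that $\alpha=\frac23$ is sharp for (b), then follows from the sharpness in \cite{CG-2023} via this equivalence.
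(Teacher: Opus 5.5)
Your proposal is correct and follows the paper's proof essentially verbatim. You delegate (a)$\Rightarrow$(b) to Theorem \ref{GLHLS}, and for (b)$\Rightarrow$(a) you use the same normalized test density $\rho=e^{2u}$ and the same potential solving $-\alpha\Delta_{\s^2}w=\hat\rho-1$; the completing-the-square step $\alpha\int_{\s^2}|\nabla_{\s^2}(u-w)|^2\dw\ge 0$ is also the paper's, packaged as a general bound before setting $v=u$, and your integrability checks ($|\bm a|<1$, $e^{2u}\in L^p$, $w\in H^1(\s^2)$) fill in details the paper leaves implicit.
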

\begin{proof}
The implication $(a)\implies (b)$ is proved in Theorem \ref{GLHLS}. We prove the opposite  implication.

\medskip

Given $u\in H^1(\s^2)$, we define $\rho=\frac{\e^{2u}}{\int_{\s^2}\e^{2u}\dw}$ and  $\bm{a}=\int_{\s^2}\omega \rho\dw.$ For the simplification of the proof we assume without loss of generality, $\int_{\s^2}\e^{2u}\dw=1$.
We define $\hat{\rho}=\frac{1-\bm{a}\cdot\omega}{1-|\bm{a}|^2}\rho$ and $v(\omega)=\frac{1}{\alpha}\int_{\s^2}G(\omega,\eta)(\hat{\rho}(\eta)-1) {\rm d}\eta$. Then 
\begin{align} \label{EEq1}
-\alpha\Delta_{\s^2}v=(\hat{\rho}-1).
\end{align}
Multiplying \eqref{EEq1} by $v$ and integrating by parts, we get
\begin{align*}
\alpha \int_{\s^2}|\nabla_{\s^2} v|^2\dw =\int_{\s^2} (\hat{\rho}-1)v\dw.
\end{align*}
Therefore, the General Logarithmic Hardy-Littlewood-Sobolev inequality gives,
\begin{align}\label{EEq2}
2\int_{\s^2} u \hat{\rho}\dw- \alpha\int_{\s^2}|\nabla_{\s^2} v|^2\dw\geq \frac{1}{2}\ln(1-|\bm{a}|^2).
\end{align} 
We will maximize the L.H.S..
For this we again integrate by part the equation \eqref{EEq1} but with the test function $u$ to obtain
\begin{align*}
\mbox{L.H.S.}&=2\int_{\s^2}u \hat{\rho}\dw-\alpha\int_{\s^2} |\nabla_{\s^2} v|^2\dw\\
&=2\int_{\s^2}u(\hat{\rho}-1)-\alpha\int_{\s^2}|\nabla_{\s^2} v|^2\dw +2\int_{\s^2}u\dw\\
&=2\alpha\int_{\s^2}\langle\nabla_{\s^2} u, \nabla_{\s^2} v\rangle\dw-\alpha \int_{\s^2}|\nabla_{\s^2}v|^2\dw +2\int_{\s^2}u \dw\\
&=\alpha\int_{\s^2} |\nabla_{\s^2} u|^2\dw-\alpha \int_{\s^2}| \nabla_{\s^2} (u-v)|^2\dw+2 \int_{\s^2} u\dw\\
&\leq \alpha\int_{\s^2}|\nabla_{\s^2} u|^2\dw +2\int_{\s^2}u \dw. 
\end{align*}
Combining the above inequality with the inequality \eqref{EEq2}, and recalling that  $\int_{\s^2}\e^{2u}\dw=1$, we obtain the desired implication. 
\end{proof} 

Theorem \ref{GLHLS} and Theorem \ref{equivalent} proves Theorem \ref{main2} and it's sharpness.

\section{Optimal Log-HLS inequality with center of mass constraint} \label{loghls section}

 In this section, we prove Theorem \ref{main1} that is for $\alpha\geq \frac{1}{2}$ the constrained infimum to be finite with best lower bound to be zero.
 For the convenience of the reader, we recall that the log-HLS functional is defined by
\begin{align*}
\mathcal{F}_{\alpha}(\rho)=\int_{\s^2}\rho\ln \rho\ \dw -\frac{1}{\alpha}\int_{\s^2}\int_{\s^2} G(\omega,\eta)(\rho(\omega)-1)(\rho(\eta)-1)\ \rm d\omega\rm d\eta,
\end{align*}
over the admissible set of center of mass at $0$
\begin{align*}
\Gamma_0 := \left\{\rho \in L^1_+(\s^2) \ | \ \int_{\s^2} \rho \ln \rho \dw < \infty, \int_{\s^2} \rho \dw=1,  \int_{\s^2} \omega\rho \dw=0 \right\}.
\end{align*}

We define 
\begin{align*}
u(\omega)=\int_{\s^2} G(\omega,\eta)(\rho(\eta)-1){\rm d} \eta =: G\star(\rho-1)(\omega).
\end{align*} 
 Then, $\bar{u} : = \int_{\s^2} w \dw=0$ and $u$ solves the PDE
 $$-\Delta_{\s^2}u=\rho-1.$$
We denote the entropy and the interaction energy as follows:
\begin{align*}
\mathcal{H}(\rho) :=\int_{\s^2}\rho\ln \rho\  {\rm d}\omega, \ \ \mathcal{E}(\rho):=\int_{\s^2}\int_{\s^2} G(\omega,\eta)(\rho(\omega)-1)(\rho(\eta)-1)\dw {\rm d}\eta=\int_{\s^2}|\nabla_{\s^2} u|^2\ {\rm d}\omega.
\end{align*}

We will achieve the optimal $\alpha$ as follows: first we prove that for $\alpha > \frac{1}{2},$ $\mathcal{F}_{\alpha}$ is bounded from below on $\Gamma_0.$ In the next step we show that for every $\alpha > \frac{1}{2}$ the infimum is attained and hence solves an appropriate Euler-Lagrange equation. Because of the constraints, there will be Lagrange multipliers. In the next step we show that the Lagrange multiplies corresponding to the moment constraint vanishes, and hence we can invoke the result of Gui-Moradifam \cite{GM-2018} to conclude that $\rho \equiv 1$ is the only minimizers and hence $\inf_{\Gamma_0} \mathcal{F}_{\alpha} = 0$ for all $\alpha > \frac{1}{2}.$ Therefore, the same inequality persists to the critical threshold $\alpha = \frac{1}{2}.$ 
\medskip

\begin{Th} \label{proof alpha half}
For every $\alpha > \frac{1}{2},$ the infimum $\inf_{ \rho \in \Gamma_0} \mathcal{F}_{\alpha}(\rho) > -\infty.$
\end{Th}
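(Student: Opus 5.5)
The plan is to argue by contradiction with a concentration--compactness argument in the spirit of Chang--Hang \cite{CH-2022}, using the classical (unconstrained) log-HLS inequality only \emph{locally}. Fix $\alpha>\frac12$ and suppose there are $\rho_k\in\Gamma_0$ with $\mathcal{F}_\alpha(\rho_k)\to-\infty$. By Jensen, $\mathcal{H}(\rho_k)\ge 0$, so $\mathcal{E}(\rho_k)\to\infty$. Since $\mathcal{F}_1\ge 0$ on $\Gamma$, also $\mathcal{H}(\rho_k)\to\infty$.

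\textbf{Step 1 (no concentration beyond half the mass).} Pass to a subsequence so that $\rho_k\,{\rm d}\omega\rightharpoonup\mu$ weakly as probability measures on $\s^2$. Since $\omega$ is continuous, $\int_{\s^2}\omega\,{\rm d}\mu=0$. If $\mu$ had an atom of mass $m$ at $x$, then $mx=-\int_{\s^2\setminus\{x\}}\omega\,{\rm d}\mu$, so $m\le 1-m$, i.e.\ $m\le\frac12$. Hence for every $\delta>0$ there are $r>0$ and $k_0$ such that
\begin{align*}
\sup_{x\in\s^2}\int_{B_r(x)}\rho_k\,{\rm d}\omega\le\tfrac12+\delta\qquad\text{for all }k\ge k_0 .
\end{align*}
This follows by compactness of $\s^2$ and upper semicontinuity of $\mu$ on closed caps: if it failed, one would produce an atom of $\mu$ of mass $\ge\frac12+\delta$.

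\textbf{Step 2 (localized log-HLS with improved constant).} Cover $\s^2$ by finitely many caps $B_{r}(x_i)$, $i=1,\dots,L$, and take a smooth partition of unity $\sum_i\phi_i^2=1$ subordinate to the caps $B_{2r}(x_i)$ (shrink $r$ if needed so that Step 1 applies to these). For $f_i=\phi_i^2\rho_k$ of mass $m_i\le\frac12+\delta$, scale the classical log-HLS inequality (valid for $\alpha=1$ on densities of mass $1$) to mass $m_i$. This gives
\begin{align*}
\int f_i\ln f_i\ \ge\ \frac{1}{m_i}\int\!\!\int G\,f_if_i\ -\ C ,
\end{align*}
with $C$ depending only on $r,\delta,L$; the $\ln m_i$ terms are bounded. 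I would then assemble these pieces. Since $\phi_i^2\le1$, one has $\phi_i^2\rho\ln(\phi_i^2\rho)\le\phi_i^2\rho\ln\rho$, so $\sum_i\int f_i\ln f_i\le\mathcal{H}(\rho_k)$ up to the harmless term $\sum_i\int\phi_i^2\ln\phi_i^2\,\rho$, which is bounded above by $0$.

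For the energy, rather than splitting the double integral directly, I would use the dual form on the potential side, as Chang--Hang do:
\begin{align*}
\mathcal{E}(\rho)=\sup_v\Big[2\int v(\rho-1)-\int|\nabla v|^2\Big].
\end{align*}
Testing with the optimal $v=u_k$ and writing $u_k=\sum_i\phi_i^2u_k$, one bounds the energy by a sum of local pieces. The cross terms are bounded by $\int|\nabla\phi_i|^2u_k^2$, which is lower order in $\mathcal{E}$ (control it by $\epsilon\mathcal{E}+C_\epsilon$ using the $L^2$ norm of mean-zero $u_k$ and interpolation). The outcome should be
\begin{align*}
\mathcal{H}(\rho_k)\ \ge\ \frac{1-\epsilon}{\tfrac12+\delta}\,\mathcal{E}(\rho_k)-C_{\epsilon,\delta}.
\end{align*}

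\textbf{Step 3 (conclusion).} Choose $\delta,\epsilon$ so small that $\frac{1-\epsilon}{1/2+\delta}>\frac1\alpha$; this is possible exactly because $\alpha>\frac12$. Then
\begin{align*}
\mathcal{F}_\alpha(\rho_k)\ge\Big(\frac{1-\epsilon}{1/2+\delta}-\frac1\alpha\Big)\mathcal{E}(\rho_k)-C\ge -C,
\end{align*}
which contradicts $\mathcal{F}_\alpha(\rho_k)\to-\infty$.

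The main obstacle is Step 2, specifically the localization of the interaction energy. The logarithmic kernel is singular on the diagonal, so naive splitting of $\int\!\!\int G\rho\rho$ across overlapping caps loses the constant. I expect to handle this on the potential side. Alternatively, one can work with the primal localized Moser inequality $\ln\int e^{2w}\phi^2\le(\frac{m}{1}+\epsilon)\int|\nabla w|^2+C$ combined with the Gibbs principle of Section~\ref{Gibbs section}, and derive the local dual estimate from it; here $m$ denotes the local mass.
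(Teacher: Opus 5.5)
Your Steps 1 and 3 are sound. Step 1 is the same observation the paper uses: a probability measure with zero barycenter has no atom of mass larger than $\frac12$. The gap is in Step 2, which carries the whole argument: the localization you sketch does not produce the constant $\frac{1}{1/2+\delta}$.

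With $f_i=\phi_i^2\rho_k$ the entropies add up, $\sum_i\int f_i\ln f_i\le\mathcal{H}(\rho_k)$, but the self-energies do not. Indeed $\mathcal{E}(\rho_k)=\sum_{i,j}\iint G f_if_j$, and for overlapping caps the off-diagonal terms are as large as the diagonal ones once mass concentrates in an overlap.

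Concretely, take $x_0$ with $\phi_1^2(x_0)=\phi_2^2(x_0)=\frac12$. Let $\rho_k$ be a bump of mass $\frac12$ concentrating at $x_0$ at scale $\lambda_k\to0$, plus its antipodal image. Then $\rho_k\in\Gamma_0$ and every small cap carries mass at most $\frac12$.
\begin{itemize}
\item The bump at $x_0$ contributes $\approx\ln\frac1{\lambda_k}$ to $\mathcal{H}(\rho_k)$ and $\approx\frac12\ln\frac1{\lambda_k}$ to $\mathcal{E}(\rho_k)$.
\item It contributes only $\approx\frac14\ln\frac1{\lambda_k}$ to $\mathcal{E}(f_1)+\mathcal{E}(f_2)$; the other half sits in the cross term $2\iint Gf_1f_2$.
\end{itemize}
So the needed bound $\sum_i\mathcal{E}(\phi_i^2\rho_k)\ge(1-\epsilon)\mathcal{E}(\rho_k)-C$ is false.

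The potential-side splitting does not repair this. From $u_k=\sum_i\phi_i^2u_k$ and $\sum_i\phi_i\nabla\phi_i=0$ you get the IMS identity $\int|\nabla u_k|^2=\sum_i\int|\nabla(\phi_iu_k)|^2-\int u_k^2\sum_i|\nabla\phi_i|^2$. Hence $\mathcal{E}(\rho_k)\le\sum_i\mathcal{E}(\phi_i\rho_k)+\int u_k^2\sum_i|\nabla\phi_i|^2+C$.
\begin{itemize}
\item The error term is indeed at most $\epsilon\mathcal{E}(\rho_k)+C_\epsilon$. This uses interpolation with the uniform $\dot H^{-1}$ bound on $u_k$; Poincar\'e alone only gives $\int u_k^2\le\frac12\mathcal{E}(\rho_k)$.
\item The local pieces are now $\phi_i\rho_k$, not $\phi_i^2\rho_k$, so the scaled log-HLS costs $\int(\sum_i\phi_i)\rho_k\ln\rho_k$.
\item Since $\sum_i\phi_i\ge\sum_i\phi_i^2=1$, with $\sum_i\phi_i(x_0)=\sqrt2$, bounding the masses by $\frac12+\delta$ makes you pay $\sqrt2(\frac12+\delta)$ times the entropy of that bump.
\end{itemize}
Either way, a fixed partition loses a fixed factor exactly where the mass concentrates. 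Your fallback ``localized Moser inequality with constant $m$'' is not a meaningful primal statement, since the local mass of $\rho$ cannot enter a Moser inequality for $w$.

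What is missing is a mechanism that keeps the seams of the decomposition away from where $\rho_k$ concentrates. The paper gets it from concentration--compactness:
\begin{itemize}
\item The Gibbs principle gives $\ln\int e^{2m_nw_n}\ge(2-\frac1\alpha)m_n^2$, and Chang--Hang's Proposition~2.4 then applies.
\item A Jensen estimate shows $e^{2u_n}$ stays bounded in $L^1$ near any point where the weak limit $\mu$ has mass $<\frac12$. Together with the barycenter constraint this forces $\mu=\frac12(\delta_{\omega_0}+\delta_{-\omega_0})$.
\item The paper then cuts $\s^2$ along an equator far from $\pm\omega_0$. The entropy splits exactly, each half is controlled by the scaled classical log-HLS with mass tending to $\frac12$, and the cross term is small because almost no mass lies near the equator.
\end{itemize}

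Your more direct route would bypass Chang--Hang's proposition, but it needs an extra device. One option:
\begin{itemize}
\item Use cutoffs whose transition layers have area $O(\eta)$, so that $\int_{\s^2}\sum_i\phi_i\,{\rm d}\omega\le1+C\eta$.
\item For each $k$, rotate the partition, choosing the rotation by averaging over $SO(3)$ so that $\int(\sum_i\phi_i)(\rho_k\ln\rho_k)_+\,{\rm d}\omega\le(1+C\eta)\int(\rho_k\ln\rho_k)_+\,{\rm d}\omega$.
\item Combine this with the IMS bound and the rotation-invariant cap-mass bound of Step~1 to get $(1-\epsilon)\mathcal{E}(\rho_k)\le(\frac12+\delta)(1+C\eta)\mathcal{H}(\rho_k)+C$.
\end{itemize}
As written, the proposal contains no such step, so Step~2 remains unproved.
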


\begin{proof}
{\bf Step 1}: Suppose there exists a sequence $\{\rho_{n}\}$ such that $\int_{\s^2}\rho_n\dw=1, \int_{\s^2}\omega\rho_n\dw = 0$
and 
$\mathcal{F}_{\alpha}(\rho_n)\to -\infty$. We set $u_n=G\star(\rho_n-1)$ then and $\bar{u}_n=0$ and 
\begin{align*}
\mathcal{F}_{\alpha}(\rho_n)=\int_{\s^2} \rho_n\ln \rho_n\dw -\frac{1}{\alpha}\int_{\s^2}\rho_n u_n
=\mathcal{H}(\rho_n)-\frac{1}{\alpha}\mathcal{E}(\rho_n).
\end{align*}

\medskip

  We claim that $\mathcal{H}(\rho_n) \to \infty $, and $\mathcal{E}(\rho _n)\to \infty$.

\medskip

If $\mathcal{E}(\rho_n)\leq C$ then $\mathcal{F}_{\alpha}(\rho_n)\to -\infty$ would give $\mathcal{H}(\rho_n)\to -\infty$. This is impossible as $\mathcal{H}(\rho_n)\geq 0$. Similarly, if $\mathcal{H}(\rho_n)\leq C$, by log-HLS inequality \cite{CL-1992},  $\mathcal{E}(\rho_n)\leq \mathcal{H}(\rho_n)\leq C.$

\medskip

 We set  $m_n=\sqrt{\mathcal{E}(\rho_n)}
$ and $w_n=\frac{u_n}{m_n}$.
 Then, 
 $$\|\nabla w_n\|_2=1\ \ \ ,\ \ \ \ \bar{w}_n=0$$ 
 
 and
\begin{align*}
\int_{\s^2}w_n \rho_n\dw =\frac{1}{m_n}\int_{\s^2} u_n\rho_n \dw 
=\frac{1}{m_n}\mathcal{E}(\rho_n)= m_n.
\end{align*}
 The second equality follows from $\int_{\s^2}u_n \dw=0.$

 \medskip

 {\bf Step 2:} By Gibbs variational formula we have,
 \begin{align*}
 \ln \int_{\s^2}\e^{2m_n w_n}\dw&\geq 2 m_n\int_{\s^2} w_n \rho_n-\int_{\s^2} \rho_n\ln \rho_n\\
 &=2 m_n^2- \mathcal{H}(\rho_n).
 \end{align*}
 
\medskip
 
Since $\mathcal{F}_{\alpha}(\rho_n)\to -\infty$, we have for large $n$, $\mathcal{H}(\rho_n)\leq \frac{1}{\alpha}\mathcal{E}(\rho_n)$. Therefore,
\begin{align} \label{CH log bound}
\ln\int_{\s^2}\e^{2m_nw_n}\geq \left(2-\frac{1}{\alpha}\right)m_n^2
\end{align}
with $m_n\to \infty$, $\|\nabla w_n\|=1$. Applying the concentration compactness lemma of Chang-Hang \cite[Proposition 2.4]{CH-2022}, we conclude 
\begin{align*}
w_n\rightharpoonup w \ \ \mbox{in}\ \ \ H^1(\s^2)\ \ \ ,\ \ \  |\nabla w_n|^2\ \dw \overset{\star}{\rightharpoonup} |\nabla w|^2\ \dw+\sigma, \ \mbox{in measure}
\end{align*}
and
\begin{align} \label{a weak conv}
\frac{\e^{2m_n w_n}}{\int_{\s^2} \e^{2m_nw_n}\dw}\overset{\star}{\rightharpoonup} \nu\ \ \  \mbox{in measure}, \ \nu=\sum_{i=1}^{N}\nu_i \delta_{x_i},\ \ \  \nu_i\geq 0, \ \ \ \sum_{i=1}^N\nu_i=1,
\end{align}
where the blow-up set is characterized by  $\left\{ x\in \s^2: \sigma(\{x\})\geq (2-\frac{1}{\alpha})\right\}=\{x_1,\dots x_N\}.$

\medskip

{\bf Step 3:} Let $\rho_n \dw \overset{\star}{\rightharpoonup} \mu$ in measure, then it satisfies $\int_{\s^2} \omega{\rm d}\mu = 0.$ We claim that $\mu = \frac{1}{2}(\delta_{\omega_0} + \delta_{-\omega_0})$ for some $\omega_0 \in \s^2.$

To prove it, let $\omega_0 \in \ \mbox{supp}(\mu)$ be such that $\mu(\{\omega_0\}) < \frac{1}{2}.$ Choose $r_0$ small enough such that $\mu(B_{r_0}(\omega_0)) < \frac{1}{2}.$  

For $\omega \in \s^2$ and $r >0,$ let $m(\rho_n, B_r(\omega)) := \int_{B_r(\omega)} \rho_n$ denote the mass of $\rho_n$ in the ball $B_r(\omega).$ Shrinking $r_0$ if necessary, we can assume that there exists a $\delta>0$ such that for large enough $n,$ 
\begin{align} \label{a mass bound}
\delta \leq m(\rho_n, B_{r_0}(\omega)) < \frac{1}{2} - \delta, \ \ \ \mbox{for all} \ \ \ \omega \in B_{\frac{r_0}{2}}(\omega_0).
\end{align}
 Now for $\omega \in B_{\frac{r_0}{2}}(\omega_0),$ we estimate $u_n.$ Recall that the Green's function satisfies the bound: 
 $G(\omega, \eta) \leq -2 \ln d(\omega, \eta) + C.$ Using this bound, we estimate
 \begin{align*}
 u_n(\omega) &= \int_{\s^2} G(\omega, \eta) \rho_n(\eta) {\rm d}\eta \\
 & = \int_{B_{r_0}(\omega)} G(\omega, \eta) \rho_n(\eta) {\rm d}\eta + \int_{\s^2 \backslash B_{r_0}(\omega)} G(\omega, \eta) \rho_n(\eta) {\rm d}\eta \\
& \leq - 2 \int_{B_{r_0}(\omega)} \ln d(\omega, \eta) \rho_n(\eta) {\rm d\eta} + C \\
&= - 2 m(\rho_n, B_{r_0}(\omega)) \int_{B_{r_0}(\omega)} \ln d(\omega, \eta) \frac{\rho_n(\eta)}{m(\rho_n, B_{r_0}(\omega))} {\rm d\eta} + C.
 \end{align*}
Applying Jensen's inequality, we get
\begin{align*}
e^{2 u_n(\omega)} \leq C \int_{B_{r_0}(\omega)}  d(\omega, \eta)^{- 4 m(\rho_n, B_{r_0}(\omega))} \frac{\rho_n(\eta)}{m(\rho_n, B_{r_0}(\omega))} {\rm d\eta}, \ \ \mbox{for all} \ \omega \in B_{\frac{r_0}{2}}(\omega_0).
\end{align*}
Integrating, and using \eqref{a mass bound} we deduce, $\int_{B_{\frac{r_0}{2}}(\omega_0)} e^{2 u_n(\omega)} \dw \leq C.$ 
By \eqref{CH log bound} and \eqref{a weak conv}, we conclude that $\omega_0 \not \in \ \mbox{supp}(\nu).$ Hence, 
$\mbox{supp}(\nu) \subset \{\omega \ : \ \mu(\{\omega\}) \geq \frac{1}{2}\}.$ This concludes simultaneously that $\{\omega \ : \ \mu(\{\omega\}) \geq \frac{1}{2}\} \neq \emptyset$ and that because of the center of mass constraint, $\mu$ has to be the sum of Dirac masses as claimed.

\medskip

{\bf Step 4:} From Step 3, it is classical to show that $\mathcal{F}_{\alpha}(\rho_n)$ remains bounded from below. For reader's convenience, we outline the proof. 

Recall that $G(\omega,\eta) = -2\log d(\omega,\eta) + c$, where $d$ denotes the geodesic distance on $\s^2$ and $c$ is a fixed constant. 

We decompose the sphere into two disjoint hemispheres, $\s^2=U_+\cup U_-,$
$U_+\cap U_-=\emptyset,$
separated by the equator. We choose the decomposition so that the concentration point $\omega_0$ lies in $U_+$ (at the north pole), and consequently $-\omega_0\in U_-$ (at the south pole).

We decompose $\rho_n$ in to two parts: $\rho_n = \rho_{1,n} + \rho_{2,n} := \rho_n \chi_{U_+} + \rho_n \chi_{U_-}$, then
\begin{align*}
\rho_{1,n} \rightharpoonup \frac{1}{2} \delta_{\omega_0}, \qquad \rho_{2,n} \rightharpoonup  \frac{1}{2} \delta_{-\omega_0}, 
\end{align*}
weakly as measures on $\s^2$. Fix $\varepsilon > 0$.
We decompose the interaction energy in to three parts: $\mathcal{E}(\rho_n) =  \mathcal{E}(\rho_{1,n}) + \mathcal{E}(\rho_{2,n}) + \mathcal{E}_{\mbox{\tiny{cross}}}(\rho_n).$

Then by classical log-HLS inequality
\begin{align*}
\frac{1}{\alpha}\left(\mathcal{E}(\rho_{1,n}) + \mathcal{E}(\rho_{2,n}) \right)
&\leq \frac{ \int_{\s^2} \rho_{1,n} \dw}{\alpha}\mathcal{H}(\rho_{1,n}) + \frac{ \int_{\s^2} \rho_{2,n} \dw}{\alpha}\mathcal{H}(\rho_{2,n}) + C \\
& \leq \frac{\max\{ \int_{\s^2} \rho_{1,n} \dw, \int_{\s^2} \rho_{2,n} \dw \}}{\alpha}\mathcal{H}(\rho_{n}) + C \\
& \leq (1 - \varepsilon)\mathcal{H}(\rho_{n}) + C,
\end{align*}
since $\alpha > \frac{1}{2},$ and where $C$ is independent of $n.$ Therefore it is enough to estimate the cross term as below:
\begin{align*}
\mathcal{E}_{\mbox{\tiny{cross}}}(\rho_n) := 2\int_{\s^2} \int_{\s^2}G(\omega,\eta)\, \rho_{1,n}(\omega)\, \rho_{2,n}(\eta) \, {\rm d}\omega\, {\rm d}\eta \leq \varepsilon \mathcal{H}(\rho_{n}) + C.
\end{align*}

Since $G$ is smooth outside the diagonal, it is enough to prove estimate of the form:
\begin{align} \label{cross term}
\int\int_{\{d(\omega, \eta) < \delta\}}G(\omega,\eta)\, \rho_{1,n}(\omega)\, \rho_{2,n}(\eta) \, {\rm d}\omega\, {\rm d}\eta \leq \varepsilon \mathcal{H}(\rho_{n}) + C.
\end{align}
for some $\delta >0.$ Fix $\delta$ small such that
\begin{align*}
\{d(\omega,\eta) < \delta\} \;=\; \Bigl(\{d(\omega,\eta)<\delta\} \cap \{\eta \notin B_{\delta}(- \omega_0)\}\Bigr) \;\cup\; \Bigl(\{d(\omega,\eta)<\delta\} \cap \{\omega \notin B_{\delta}(\omega_0)\}\Bigr).
\end{align*}

Fix $\beta \in (1, 2).$ Using $G(\omega,\eta) \leq -2\log d(\omega,\eta) + C,$
and Young's inequality $ab \le b\log b + e^a,$
with
\begin{align*}
a = \beta \log \frac{1}{d(\omega,\eta)}, \qquad b = \beta^{-1} \rho_{1,n}(\omega),
\end{align*}
we obtain, on the set $\{d(\omega,\eta)<\delta\} \cap \{\eta \notin B_{\delta}(- \omega_0)\}$,
\begin{align*}
\rho_{1,n}(\omega) \log \frac{1}{d(\omega,\eta)} \le \beta^{-1} \rho_{1,n}(\omega) \log \rho_{1,n}(\omega) + C + d(\omega,\eta)^{-\beta}.
\end{align*}

Since, $\int_{\{d(\omega,\eta)<\delta\}} d(\omega,\eta)^{-\beta} \, {\rm d}\omega \le C_{\beta} < \infty,$ 
integrating against $\rho_{2,n}(\eta)\, {\rm d}\eta$ over $\{\eta \notin B_\varepsilon(-\omega_0)\}$ gives
\begin{align*}
&\int\int_{\{d(\omega,\eta)<\delta\} \cap \{\eta \notin B_{\delta}(- \omega_0)\}} G(\omega,\eta)\, \rho_{1,n}(\omega)\, \rho_{2,n}(\eta) \, {\rm d}\omega\, {\rm d}\eta \\
&\leq \frac{2}{\beta} \left( \int_{\s^2 \setminus B_{\delta}(-\omega_0)} \rho_{2,n}\, {\rm d}\eta \right) \int_{\s^2} \rho_{1,n} \log \rho_{1,n} \, {\rm d}\omega +  C \leq \varepsilon \mathcal{H}(\rho_{1,n}) + C.
\end{align*}
Here the last inequality follows from the fact that $\rho_{2,n}$ has small mass in $\s^2 \setminus B_{\delta}(-\omega_0).$
The estimate over the other region is similar. This proves \eqref{cross term} and hence $\mathcal{F}_{\alpha}(\rho_n)$ remains uniformly bounded below. 
\end{proof}

It is now classical to show that $\inf_{ \rho \in \Gamma_0} \mathcal{F}_{\alpha}(\rho) > -\infty$ is attained for every $\alpha > \frac{1}{2}.$ We fix a $\alpha > \frac{1}{2},$ and denote the corresponding optimizer by $\rho$ (the dependence on $\alpha$ has been ignored). Then $\rho$ satisfies the Euler-Lagrange equation 
\begin{equation} \label{rho EL}
\log\rho(\omega)-\frac{2}{\alpha}\int_{\s^2}G(\omega,\eta)\bigl(\rho(\eta)-1\bigr)\,{\rm d}\eta = a+b\cdot\omega.
\end{equation}
where $a\in\mathbb{R}$, $b\in\mathbb{R}^3$ are Lagrange multipliers. 

\medskip 

We define $u(\omega):=\frac{1}{\alpha} \int_{\s^2}G(\omega,\eta)\bigl(\rho(\eta)-1\bigr)\,{\rm d}\eta$ (slightly different than the earlier notation where $u$ was defined with out the $\alpha$). Then $u$ solves 
 $-\Delta u=\frac1\alpha(\rho-1)$, i.e.
\begin{equation} \label{the EL log}
-\alpha \Delta u + 1=\rho =   \frac{e^{2u+b\cdot\omega}}{ \int_{\s^2} e^{2u+b\cdot\eta} {\rm d}\eta}.
\end{equation}
Integrating \eqref{the EL log} against ${\rm d}\omega$ and $\omega {\rm d}\omega$ and using $\rho \in \Gamma_0$, $-\Delta_{\s^2} \omega = 2 \omega$, we conclude that 
\begin{align} \label{a u property}
\int_{\s^2} u \ {\rm d}\omega = \int_{\s^2}  \omega u \ {\rm d}\omega = 0.
\end{align}
By classical regularity theory, we obtain that $u$ and $\rho$ are smooth.
Next we show that all the Lagrange multiplier vanishes. 

\begin{Lem} \label{multiplier vanishes}
Let $\rho \in \Gamma_0, u \in H^1(\s^2)$ satisfies the EL equation \eqref{the EL log}, then $b = 0.$
\end{Lem}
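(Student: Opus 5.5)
The plan is a Kazdan--Warner (Pohozaev-type) argument: test the Euler--Lagrange equation \eqref{the EL log} against the conformal vector fields $\nabla_{\s^2}\omega_j$, $j=1,2,3$, generated by the coordinate functions. Three facts drive it. First, on $\s^2$ one has $\nabla^2_{\s^2}\omega_j=-\omega_j\,g$ and $-\Delta_{\s^2}\omega_j=2\omega_j$. Second, \eqref{a u property} holds. Third, $\rho\in\Gamma_0$. By the regularity remark preceding the lemma, $u$ and $\rho$ are smooth, so every integration by parts below is justified.

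\textbf{Step 1: the Laplacian term vanishes.} Write $X_j=\nabla_{\s^2}\omega_j$. Integrate by parts and use $\nabla X_j=-\omega_j\,g$:
\begin{align*}
\int_{\s^2}\Delta_{\s^2}u\,\langle\nabla u,X_j\rangle\dw
&=-\int_{\s^2}\nabla^2u(\nabla u,X_j)\dw+\int_{\s^2}\omega_j|\nabla u|^2\dw\\
&=\frac12\int_{\s^2}|\nabla u|^2\Delta_{\s^2}\omega_j\dw+\int_{\s^2}\omega_j|\nabla u|^2\dw=0 .
\end{align*}
This uses that we are in dimension two. Multiplying \eqref{the EL log} by $\langle\nabla u,X_j\rangle$ and integrating therefore gives
\[
\int_{\s^2}(\rho-1)\langle\nabla u,X_j\rangle\dw=0 .
\]
Moreover $\int_{\s^2}\langle\nabla u,X_j\rangle\dw=2\int_{\s^2}\omega_j u\dw=0$ by \eqref{a u property}. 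Hence
\[
\int_{\s^2}\rho\langle\nabla u,X_j\rangle\dw=0 .
\]

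\textbf{Step 2: use the explicit form of $\rho$.} From $\rho=e^{2u+b\cdot\omega}/\int_{\s^2} e^{2u+b\cdot\eta}\,{\rm d}\eta$ we get
\[
\nabla\rho=\rho\,\bigl(2\nabla u+\nabla(b\cdot\omega)\bigr).
\]
Therefore
\[
0=2\int_{\s^2}\rho\langle\nabla u,X_j\rangle\dw=\int_{\s^2}\langle\nabla\rho,X_j\rangle\dw-\int_{\s^2}\rho\,\langle\nabla(b\cdot\omega),X_j\rangle\dw .
\]
The first term on the right equals $2\int_{\s^2}\omega_j\rho\dw=0$, by the center-of-mass constraint in $\Gamma_0$. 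So $\int_{\s^2}\rho\,\langle\nabla(b\cdot\omega),\nabla\omega_j\rangle\dw=0$ for each $j$.

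\textbf{Step 3: conclude.} Multiply the last identity by $b_j$ and sum over $j$ to obtain
\[
\int_{\s^2}\rho\,|\nabla_{\s^2}(b\cdot\omega)|^2\dw=0 .
\]
Since $\rho>0$ everywhere (it is an exponential), $b\cdot\omega$ is constant on $\s^2$. This forces $b=0$.

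The main obstacle is Step 1, the verification that the Pohozaev term vanishes. It needs the Hessian identity for the first spherical harmonics and the two-dimensional cancellation, and it must be checked carefully with the normalized measure and the sign conventions. Everything else is bookkeeping that uses \eqref{a u property} and the constraint $\int_{\s^2}\omega\rho\dw=0$. Notably, the argument never uses the value of $\alpha$.
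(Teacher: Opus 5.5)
Your proposal is correct and follows essentially the same route as the paper: the Kazdan--Warner identity (which you verify directly instead of citing it) kills $\int_{\s^2}\rho\,\langle\nabla_{\s^2}u,\nabla_{\s^2}\omega_j\rangle\dw$, and differentiating $\rho$ then gives $b=\bigl(\int_{\s^2}\rho\,\omega\otimes\omega\dw\bigr)b$. Your closing step, contracting with $b$ to obtain $\int_{\s^2}\rho\,|\nabla_{\s^2}(b\cdot\omega)|^2\dw=0$, is just a more direct phrasing of the paper's trace/eigenvalue argument.
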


\begin{proof}
For $c \in \mathbb{R}^3,$ we define $F(\omega) = c \cdot \omega.$ Then we recall the following result from Kazdan and Warner \cite[equation (8.3)]{KW-1971}
\begin{equation} \label{KW condition}
\int_{\s^2}\Delta_{\s^2} u (\nabla_{\s^2} F \cdot \nabla_{\s^2} u) \,{\rm d}\omega=0. 
\end{equation}

Using $\nabla_{\s^2}(b\cdot\omega)=b-(b\cdot\omega)\omega$, and taking derivative in \eqref{rho EL}, we get
\begin{align*}
\nabla_{\s^2}\rho=\rho\Bigl(2\nabla_{\s^2} u+b-(b\cdot\omega)\omega\Bigr).
\end{align*}

Let $\omega = (\omega_1, \omega_2, \omega_3),$ then taking inner product with $\nabla_{\s^2} \omega_i,$ we get
\begin{equation}
\nabla_{\s^2} \rho \cdot \nabla_{\s^2} \omega_i=\rho\Bigl(2 \nabla_{\s^2} u \cdot \nabla_{\s^2} \omega_i+b\cdot\nabla_{\s^2}\omega_i-(b\cdot\omega)\,\omega\cdot\nabla_{\s^2}\omega_i\Bigr).
\end{equation}
Rearranging the terms, we get 
\begin{equation} \label{an identity}
\rho \nabla_{\s^2} u \cdot \nabla_{\s^2} \omega_i=\tfrac12 \nabla_{\s^2} \rho \cdot \nabla_{\s^2} \omega_i -\tfrac12\rho\,b\cdot\nabla_{\s^2}\omega_i+\tfrac12\rho(b\cdot\omega)\,\omega\cdot\nabla_{\s^2}\omega_i. 
\end{equation}
 Since $\Delta_{\s^2}\omega_i =-2\omega_i$ and $\int\omega_i\rho\,{\rm d}\omega=0$, integration by parts gives 
\begin{align*}
\int_{\s^2}  \nabla_{\s^2} \rho \cdot \nabla_{\s^2} \omega_i \ {\rm d}\omega=2\int_{\s^2}\omega_i\rho\,{\rm d}\omega=0.
\end{align*}
On the other hand, using the identities $\nabla_{\s^2}\omega_i=e_i-\omega_i\omega$ equivalently $b\cdot\nabla_{\s^2}\omega_i=b_i-(b\cdot\omega)\omega_i,$ and $\omega\cdot\nabla_{\s^2}\omega_i=0,$
and integrating \eqref{an identity} we conclude
\begin{equation}\label{last id 1}
\int_{\s^2} \rho \nabla_{\s^2} u \cdot \nabla_{\s^2} \omega_i \,{\rm d}\omega=-\tfrac12\Bigl[b_i-\int\rho(b\cdot\omega)\omega_i\,{\rm d}\omega\Bigr]. 
\end{equation}
On the other hand, from \eqref{the EL log}, $\rho=1 - \alpha \Delta_{\s^2} u$, and so
\begin{align} \label{last id 2}
\int_{\s^2} \rho \nabla_{\s^2} u \cdot \nabla_{\s^2} \omega_i \,{\rm d}\omega = \int_{\s^2}  \nabla_{\s^2} u \cdot \nabla_{\s^2} \omega_i \,{\rm d}\omega - \alpha\int_{\s^2} \Delta u (\nabla_{\s^2} u \cdot \nabla_{\s^2} \omega_i )\,{\rm d}\omega  
= 0,
\end{align}
where the first term vanishes by \eqref{a u property} and the second term vanishes by the Kazdan Warner condition \eqref{KW condition}. Hence \eqref{last id 1} and \eqref{last id 2} gives
\begin{align*}
b_i=\int_{\s^2}\rho\,(b\cdot\omega)\,\omega_i\,{\rm d}\omega, \qquad i=1,2,3,
\end{align*}
i.e.\ in vector form,
\begin{equation} \label{the matrix}
b= \left(\int_{\s^2}\rho\,(\omega\otimes\omega)\,{\rm d}\omega \right) b.
\end{equation}
It is easy to verify that the above matrix is positive definite.
Since the trace of the matrix $\left(\int_{\s^2}\rho\,(\omega\otimes\omega)\,{\rm d}\omega \right)$ is $\int_{\s^2} \rho \ {\rm d}\omega = 1,$ if $b \neq 0,$ \eqref{the matrix} shows that $b$ is an eigenvector corresponding to the eigenvalue $1$, and hence the other two eigen values much be zero. This contradiction proves $b = 0.$
\end{proof}

As a consequence of the above results, we can now state and prove the main theorem. 

\begin{Th}
For every $1 > \alpha \geq \frac{1}{2},$ $\inf_{ \rho \in \Gamma_0} \mathcal{F}_{\alpha}(\rho) = 0.$ Moreover, the infimum is attained only by the constant function $\rho \equiv 1.$ 
\end{Th}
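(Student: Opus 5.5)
We argue first for $\alpha \in (\tfrac12, 1)$ and then pass to $\alpha = \tfrac12$ by a limit. Fix $\alpha \in (\frac12,1)$. By Theorem \ref{proof alpha half}, $\inf_{\Gamma_0}\mathcal{F}_\alpha > -\infty$, and we take for granted (as stated after that theorem) that the infimum is attained by some $\rho \in \Gamma_0$. This minimizer satisfies the Euler--Lagrange equation \eqref{rho EL}. In terms of $u = \frac1\alpha G\star(\rho-1)$ this is \eqref{the EL log}, with the orthogonality conditions \eqref{a u property}, and $u,\rho$ are smooth. Lemma \ref{multiplier vanishes} gives $b=0$. Hence $u$ solves the mean-field equation
\begin{align*}
-\alpha\Delta_{\s^2} u + 1 = \frac{e^{2u}}{\int_{\s^2} e^{2u}\,{\rm d}\eta}, \qquad \int_{\s^2}\omega\, e^{2u}\,{\rm d}\omega = 0 .
\end{align*}

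Next we invoke the uniqueness theorem of Gui--Moradifam \cite{GM-2018}. For $\alpha \in [\frac12,1)$, every solution of this equation is constant, i.e.\ $u\equiv \bar u = 0$. Hence $\rho = 1 - \alpha\Delta u \equiv 1$, and the minimum value is $\mathcal{F}_\alpha(1) = 0$. Since every minimizer solves the Euler--Lagrange equation and therefore equals $1$, the constant is the unique minimizer for $\alpha\in(\frac12,1)$. The one point needing care is that the Gui--Moradifam result is usually phrased for the equation $\alpha\Delta u + e^{2u}/\!\int e^{2u} - 1 = 0$ with $\alpha\ge\frac12$, which matches after rescaling. The center-of-mass condition is automatically compatible here, since $\rho\in\Gamma_0$.

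For $\alpha = \frac12$, the lower bound follows by monotonicity. The map $\alpha\mapsto \mathcal{F}_\alpha(\rho)$ is nondecreasing, because $\mathcal{E}(\rho)\ge0$. Letting $\alpha\downarrow\frac12$ in $\mathcal{F}_\alpha(\rho)\ge 0$, which holds for each fixed $\rho$, gives $\mathcal{F}_{1/2}(\rho)\ge 0$ on $\Gamma_0$, with equality at $\rho\equiv1$. For uniqueness at $\alpha=\frac12$, suppose $\rho\in\Gamma_0$ satisfies $\mathcal{F}_{1/2}(\rho)=0$. Then $\rho$ minimizes $\mathcal{F}_{1/2}$ on $\Gamma_0$, so it satisfies \eqref{rho EL} with $\alpha=\frac12$. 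Lemma \ref{multiplier vanishes}, whose proof does not use $\alpha>\frac12$, again gives $b=0$.

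The main obstacle is regularity at $\alpha = \tfrac12$. At this critical value a finite-entropy minimizer is not immediately known to be smooth, yet the Kazdan--Warner identity and Gui--Moradifam need classical solutions. My plan is to note that $\rho\ln\rho\in L^1$ gives $u = 2G\star(\rho-1)$ with $e^{pu}\in L^1$ for every $p$, by the Brezis--Merle-type estimate. Since $\rho = e^{2u+b\cdot\omega}/\int e^{2u+b\cdot\eta}\,{\rm d}\eta$, it follows that $\rho\in L^p$ for all $p$, and bootstrapping through elliptic regularity makes $u$ smooth. Gui--Moradifam at $\alpha=\frac12$ then forces $u$ constant, so $\rho\equiv1$. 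If the bootstrap cannot be justified directly, the fallback is the alternative classification of equality cases as a limit of the $\alpha>\frac12$ case. That route is weaker, so I would prioritize the regularity argument.
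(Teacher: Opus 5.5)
Your proposal is correct and follows the paper's own proof. For $\alpha\in(\frac12,1)$ you take an attained minimizer, write its Euler--Lagrange equation, use Lemma \ref{multiplier vanishes} to remove the moment multiplier, and invoke Gui--Moradifam to force $\rho\equiv1$; you then reach $\alpha=\frac12$ by letting $\alpha\downarrow\frac12$ and reuse the same classification for equality cases, and your Brezis--Merle bootstrap only spells out the regularity step that the paper covers with ``classical regularity theory'' and ``the same argument.''
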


\begin{proof}
Let $\alpha > \frac{1}{2}.$ Then we know that the infimum is attained by some $\rho \in \Gamma_0.$ As before, we set $u(\omega):=\frac{1}{\alpha} \int_{\s^2}G(\omega,\eta)\bigl(\rho(\eta)-1\bigr)\,{\rm d}\eta$, then by Lemma \ref{multiplier vanishes}, $u$ solves the PDE
\begin{equation} \label{optimal sol eqn}
-\alpha \Delta u + 1=\rho =   \frac{e^{2u}}{ \int_{\s^2} e^{2u} {\rm d}\eta}.
\end{equation}
By classification result of Gui and Moradifam \cite[Theorem 4.3]{GM-2018}, the only solution to \eqref{optimal sol eqn} is $u \equiv 0$ and hence $\rho \equiv 1.$ This proves $\mathcal{F}_ {\alpha} (\tilde \rho) \geq 0$ for every  $\tilde \rho \in \Gamma_0$ and $\alpha > \frac{1}{2},$ and hence the same inequality persists for $\alpha = \frac{1}{2}.$ The classification result of Gui and Moradifam holds up to $\alpha = \frac{1}{2}$ and hence the same argument gives that if a minimizer exists then it must be identically equal to $1$. 
\end{proof}

\noindent
{\bf Optimality:} Let $\alpha < 1/2$. Given $N \in \mathbb{N}$. Choose a small cap $C \subset \mathbb{S}^2$ centered at the North Pole and a density $\rho_1^*$ of mass $1/2$ supported in $C$ such that $\mathcal{F}_{\alpha}(\rho_1^*) < -N$ (this is possible by the optimality of the classical log-HLS inequality). Define  $\rho^*(\omega) = \rho_1^*(\omega) + \rho_1^*(-\omega)$, then we have
$\int_{\mathbb{S}^2} \omega \rho^*(\omega) \,{\rm d}\omega = 0, \int_{\mathbb{S}^2}  \rho^*(\omega) \,{\rm d}\omega  =1$, 
and $\mathcal{F}_{\alpha}(\rho^*) < -2N + C_0$, where the constant $C_0$ depends only on the distance between $C$ and $-C$. Taking $N \to \infty$ proves that $\mathcal{F}_{\alpha}$ is unbounded from below.

\subsection{Optimal log-HLS inequality on $\R^2$ with constraints}

Let $\mathcal{S}^{-1}:\s^2\setminus\{(0,0,1)\}\to \R^2$ be the stereographic projection on $\R^2$ with respect to the North  pole $\{(0,0,1)\}$, given by
\begin{align*}
\mathcal{S}^{-1}(\omega_1,\omega_2,\omega_3)=
\begin{cases}
\frac{((\omega_1,\omega_2)}{1-\omega_3}\quad \quad &\mbox{if}\ \omega_3<1,\\
\infty\quad \quad &\mbox{if} \ \omega_3=1.
\end{cases}
\end{align*}

Then it's inverse $\mathcal{S}:\R^2\to \s^2\setminus\{(0,0,1)\} $ is given by,
\begin{align*}
\mathcal{S}(x)=\left(\frac{2x_1}{1+|x|^2},\frac{2x_2}{1+|x|^2},\frac{|x|^2-1}{1+|x|^2}\right), \ x = (x_1,x_2).
\end{align*}
It can be shown that by direct computation
\begin{align}\label{sterest}
|\mathcal{S}(x)-\mathcal{S}(y)|=\frac{2|x-y|}{\sqrt{(1+|x|^2)(1+|y|^2)}},
\end{align}
for all $x,y \in \R^2.$ The Jacobian of $\mathcal{S}$ as a map from $\R^2$ to $\R^3$ is $\left(\frac{2}{1 + |x|^2}\right)^2$ 
and hence for any integrable function $\phi: \s^2\to \R$, we have,
\begin{align*}
\int_{\s^2}\phi(\omega) \dw=\frac{1}{\pi}\int_{\R^2} \phi\circ \mathcal{S}(x)\frac{1}{(1+|x|^2)^2}{\rm d} x.
\end{align*}

Given $\rho \in \overset{\circ}{\Gamma}$ (defined on $\s^2$), we define $\varrho (x) := \frac{1}{\pi(1 + |x|^2)} \rho \circ \mathcal{S}(x)$ (defined on $\R^2$). Then $\varrho$ satisfies 
\begin{align*}
\int_{\R^2} \varrho(x) \ {\rm d}x = 1, \ \ \int_{\R^2} \mathcal{S}(x)\varrho(x) \ {\rm d}x = 0.
\end{align*}

We define
\begin{align*}
\overset{\circ}{\Gamma}&:=\left \{\rho\in L^1_{+}(\R^2):\int_{\R^2}\rho\ln \rho \ {\rm d} x <\infty,\int_{\R^2}|x|^2\rho \ {\rm d} x  <\infty, \int_{\R^2} \mathcal{S}(x) \rho(x) \ {\rm d} x = 0 \right \}.
\end{align*}
It is enough to take $\int_{\R^2}\rho\ln(1+|x|^2) \ {\rm d} x  <\infty$ instead of the second moment finiteness, however, to reduce the notational complexity, we defined as it is.
As a consequence of Theorem \ref{main1}, we have the following theorem:

\begin{Th} \label{loghlsr2}
For every $\alpha\geq \frac{1}{2}$, there exist a constant $C(\alpha)\in \R$ such that
\begin{align*}
\int_{\R^2}\varrho\ln \varrho \ {\rm d}x+\frac{2}{\alpha}\int_{\R^2}\int_{\R^2}\varrho(x)\ln|x-y|\varrho(y) \ {\rm d}x{\rm d}y +\left(2-\frac{2}{\alpha}\right)\int_{\R^2}\varrho \ln(1+|x|^2){\rm d} x \geq - C(\alpha),
\end{align*}
holds for every $\varrho \in \overset{\circ}{\Gamma}$.
\end{Th}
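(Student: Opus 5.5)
The plan is to pull the sphere inequality of Theorem \ref{main1} back to $\R^2$ through the stereographic projection $\mathcal{S}$ and track every term. Given $\varrho\in\overset{\circ}{\Gamma}$, I would define the density on the sphere by
\begin{align*}
\rho(\mathcal{S}(x)) := \pi(1+|x|^2)^2\,\varrho(x).
\end{align*}
This is the inverse of the lifting described before the statement, with the Jacobian factor $(1+|x|^2)^{2}$, so that the change of variables $\int_{\s^2}\phi\,\dw=\frac1\pi\int_{\R^2}\phi\circ\mathcal{S}\,(1+|x|^2)^{-2}\,{\rm d}x$ applies. With this choice, $\int_{\s^2}\rho\,\dw=\int_{\R^2}\varrho\,{\rm d}x=1$ and $\int_{\s^2}\omega\rho\,\dw=\int_{\R^2}\mathcal{S}(x)\varrho\,{\rm d}x=0$.

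The first step is to check that $\rho\in\Gamma_0$. The entropy transforms as
\begin{align*}
\int_{\s^2}\rho\ln\rho\,\dw=\int_{\R^2}\varrho\ln\varrho\,{\rm d}x+\ln\pi+2\int_{\R^2}\varrho\ln(1+|x|^2)\,{\rm d}x .
\end{align*}
The last integral is finite because $\ln(1+|x|^2)\le |x|^2$ and $\varrho$ has finite second moment. Hence $\rho$ has finite entropy, and Theorem \ref{main1} gives $\mathcal{F}_\alpha(\rho)\ge 0$ for all $\alpha\ge\frac12$.

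The second step is to compute the interaction term. Since $\int_{\s^2}G(\omega,\eta)\,\dw=0$, the cross terms vanish, so
\begin{align*}
\mathcal{E}(\rho)=\int_{\s^2}\int_{\s^2}G(\omega,\eta)\rho(\omega)\rho(\eta)\,\dw\,{\rm d}\eta .
\end{align*}
I would insert $G=-2\ln|\omega-\eta|+c$ and use \eqref{sterest} in the form
\begin{align*}
\ln|\mathcal{S}(x)-\mathcal{S}(y)|=\ln 2+\ln|x-y|-\tfrac12\ln(1+|x|^2)-\tfrac12\ln(1+|y|^2).
\end{align*}
Because $\varrho$ has unit mass, this yields
\begin{align*}
\mathcal{E}(\rho)=-2\int_{\R^2}\int_{\R^2}\varrho(x)\ln|x-y|\varrho(y)\,{\rm d}x\,{\rm d}y+2\int_{\R^2}\varrho\ln(1+|x|^2)\,{\rm d}x+c',
\end{align*}
with $c'$ a universal constant. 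Substituting into $\mathcal{F}_\alpha(\rho)=\mathcal{H}(\rho)-\frac1\alpha\mathcal{E}(\rho)\ge0$ produces exactly the left-hand side of the claimed inequality, with coefficient $\frac{2}{\alpha}$ on the logarithmic interaction and $2-\frac{2}{\alpha}$ on $\int\varrho\ln(1+|x|^2)$. The inequality then holds with $C(\alpha)=\ln\pi-c'/\alpha$.

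The main obstacle is justifying the algebra: each of the three terms must be separately finite so that splitting and regrouping is legitimate. The weight term is handled by the second moment. The double integral of $\ln|x-y|$ is bounded above by $\ln(1+|x|)+\ln(1+|y|)$, hence controlled by the moment. Its negative part near the diagonal is controlled by the entropy of $\varrho$, either through the classical log-HLS inequality on $\R^2$ or through the finiteness of $\mathcal{E}(\rho)$ on the sphere, which is bounded by $\mathcal{H}(\rho)$. I would also verify the positive part of $\int\varrho\ln\varrho$, which is finite by assumption. Once these finiteness checks are done, the identity is a direct substitution.
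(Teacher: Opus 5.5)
Your proposal is correct and follows the paper's proof: you transfer Theorem~\ref{main1} through the stereographic projection, compute the entropy via the Jacobian factor $\pi(1+|x|^2)^2$, and compute the interaction term via \eqref{sterest}; your use of $\int_{\s^2}G\,\dw=0$ to remove the cross terms is the same fact the paper invokes as rotational invariance of $\int_{\s^2}\ln|\omega-\eta|\,\dw$. Your version is slightly more careful than the paper's: you start from $\varrho\in\overset{\circ}{\Gamma}$ and use the second moment to check that the lifted density lies in $\Gamma_0$, whereas the paper goes in the opposite direction and writes the Jacobian factor as $\frac{1}{\pi(1+|x|^2)}$ (a typo for the squared factor) in its definition of $\varrho$.
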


\begin{proof}
Given $\rho \in \Gamma_0,$ define $\varrho \in \overset{\circ}{\Gamma}$ by $\varrho (x) := \frac{1}{\pi(1 + |x|^2)} \rho \circ \mathcal{S}(x)$. Then 

\begin{align}\label{abc1}
\int_{\s^2}\rho\ln\rho\dw&=\frac{1}{\pi}\int_{\R^2} \rho \circ \mathcal{S}(x)\ln(\rho\circ \mathcal{S}(x))\frac{1}{(1+|x|^2)^2}{\rm d}x \notag\\
&=\int_{\R^2}\varrho(x)\ln \left(\pi(1+|x|^2)^2)\varrho(x)\right){\rm d} x \notag \\
&=\int_{\R^2}\varrho(x)\ln \varrho(x)\ {\rm d} x
+2\int_{\R^2} \varrho(x)\ln(1+|x|^2){\rm d}x+ \ln \pi.
\end{align}

Now lets simplify the interaction term. The Green's function is given by $G(\omega,\eta)=-2\ln|\omega-\eta|+ c$.  Therefore,
\begin{align}\label{abc2}
&-\int_{\s^2}\int_{\s^2} G(\omega,\eta)\left(\rho(\omega)-1\right)\left(\rho(\eta)-1\right)\dw {\rm d} \eta \notag\\
=&2\int_{\s^2}\int_{\s^2} \ln|\omega-\eta|\left(\rho(\omega)-1\right)\left(\rho(\eta)-1\right)\dw {\rm d} \eta \notag\\
=&2\int_{\s^2}\int_{\s^2}\ln |\omega-\eta|\rho(\omega)\rho(\eta)\dw \rm d \eta -4\int_{\s^2}\int_{\s^2}\ln|\omega-\eta|\rho(\omega) \dw\rm d \eta +2 \int_{\s^2}\int_{\s^2}\ln|\omega-\eta|\dw \rm d \eta \notag \\ 
=&2\int_{\s^2}\int_{\s^2}\ln |\omega-\eta|\rho(\omega)\rho(\eta)\dw \rm d \eta -2\int_{\s^2}\ln|\omega-e_1| \dw,
\end{align}
where we used $ \int_{\s^2}\ln|\omega- \eta| \dw$ is independent of the point $\eta$ by rotational invariance. 
Let us transform the first integral in terms of integrals on $\R^2$. Using \eqref{sterest} we get
\begin{align}\label{abc3}
&\int_{\s^2}\int_{\s^2}  \ln|\omega-\eta|\rho(\omega)\rho(\eta)\ \dw {\rm d}\eta \notag\\
=&\int_{\R^2}\int_{\R^2}\varrho(x)\ln|\mathcal{S}(x)-\mathcal{S}(y)|\varrho(y)\ {\rm d} x {\rm d} y \notag\\
=&\int_{\R^2}\int_{\R^2}\varrho(x) \ln|x-y| \varrho(y){\rm d} x {\rm d} y +\ln 2-\int_{\R^2}\ln(1+|x|^2)\varrho(x)\ {\rm d} x.
\end{align}

Therefore, combining \eqref{abc1}, \eqref{abc2}, \eqref{abc3} we get

\begin{align*}
&\int_{\s^2}\rho \ln \rho {\dw}
- \frac{1}{\alpha}\int_{\s^2}\int_{\s^2} G(\omega,\eta)\left(\rho(\omega)-1\right)\left(\rho(\eta)-1\right)\dw {\rm d} \eta \\
= & \int_{\R^2}\varrho\ln \varrho \ {\rm d}x+\frac{2}{\alpha}\int_{\R^2}\int_{\R^2}\varrho(x)\ln|x-y|\varrho(y) \ {\rm d}x{\rm d}y +\left(2-\frac{2}{\alpha}\right)\int_{\R^2}\varrho \ln(1+|x|^2){\rm d} x + C(\alpha).
\end{align*}
  By Theorem \ref{main1}, we conclude the proof.
\end{proof}

\subsection{Log-HLS inequality under higher moment constraints}

In this final part of this section, following the idea of \cite{CH-2022}, we improve $\alpha$ by imposing additional moment constraints. Recall from introduction 
$ \mathscr{P}_k\vert_{\mathbb{S}^2}=\bigoplus_{i=0}^{k} \mathring{\mathscr{H}}_i\vert_{\mathbb{S}^2},$
and it forms a Hilbert space with respect to the $L^2$ norm on $\mathbb{S}^2$.  

We first recall a few elementary notions of the theory of spherical harmonics and refer to \cite{BarrySimon} for a proof of the properties used below. 
Let $\gamma_m$ denote the dimension of $\mathscr{P}_m$, then it is known that $\gamma_{m}= (m+1)^2$.
Let $\left\{ \Phi_1, \Phi_2, \cdots , \Phi_{\gamma_m}  \right\}$ be an orthonormal basis of $\mathscr{P}_m$ and define the zonal harmonics
    \begin{align*}
        K_m(\omega,\eta)=\sum_{i=1}^{\gamma_m}\Phi_i(\omega)\Phi_i(\eta).
    \end{align*}

Then $K_m(x,y)$ satisfies the following properties \cite[Theorem 3.5.9]{BarrySimon} (note that we are adding all the zonal harmonics up to degree $m$): 

  \begin{itemize}
        \item[(a)] $K_m(x,y)$ is independent of the choice of the orthonormal basis.
        \item[(b)] $K_m(x,x) = \gamma_m = \mbox{dim} \ \mathscr{P}_m.$  
        \item[(c)] $\int_{\s^2} f(\omega) K_m(\omega, \eta) \dw =f(\eta)$ for every $\eta \in \mathbb{S}^2$ and $f \in \mathscr{P}_m$.
        \item[(d)] $K_m(\cdot,p) \in \mathscr{P}_m$ for each fixed $p \in \mathbb{S}^2$.
  \end{itemize}
 
We denote
    \begin{align*}
        \Gamma^m_0= \left \{ \rho \in L^1_{+}(\mathbb{S}^2) \ | \  \int_{\mathbb{S}^2}\rho \text{ln}\rho \dw < \infty, \int_{\mathbb{S}^2}\rho \dw= 1 , \int_{\mathbb{S}^2} p\rho \dw = 0 \ \forall  \ p \in \mathring{\mathscr{P}}_m \right\}.
    \end{align*}

We also denote by $\left[ \frac{m}{2} \right]$ the largest integer less than or equal to $\frac{m}{2}.$
We prove the following non-sharp log-HLS inequality.
\begin{Th} \label{restatement main3}
  Let $\alpha > \frac{1}{\gamma_{\tiny{\left[ \frac{m}{2} \right]}}}$, then  $\inf_{\rho \in \Gamma_m} \mathcal{F}_{\alpha} > -\infty$.
\end{Th}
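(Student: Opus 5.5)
We follow the same structure as the proof of Theorem \ref{proof alpha half}, replacing the ``two antipodal points'' mechanism by a mass-distribution bound obtained from the zonal kernel $K_k$ with $k=\left[\frac{m}{2}\right]$. Suppose, for contradiction, that $\rho_n\in\Gamma^m_0$ satisfies $\mathcal{F}_\alpha(\rho_n)\to-\infty$. Exactly as in Step 1 there, $\mathcal{H}(\rho_n)\to\infty$ and $\mathcal{E}(\rho_n)\to\infty$. Extracting a subsequence, $\rho_n\,{\rm d}\omega\overset{\star}{\rightharpoonup}\mu$, where $\mu$ is a probability measure. Since $p\in\mathring{\mathscr{P}}_m$ is continuous, $\mu$ inherits the constraints $\int_{\s^2} p\,{\rm d}\mu=0$ for all $p\in\mathring{\mathscr{P}}_m$, i.e. $\int p\,{\rm d}\mu=\int p\,\dw$ for all $p\in\mathscr{P}_m$.

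\textbf{Key step: atoms of $\mu$ have mass at most $1/\gamma_k$.} Fix $x\in\s^2$ and set $q(\omega)=K_k(\omega,x)^2/\gamma_k$. Since $K_k(\cdot,x)\in\mathscr{P}_k$, we have $q\in\mathscr{P}_{2k}\subset\mathscr{P}_m$, and $q\ge0$. Properties (b),(c) of the kernel give $\int q\,\dw=K_k(x,x)/\gamma_k=1$ and $q(x)=\gamma_k$. The moment constraints then yield
\[
\gamma_k\,\mu(\{x\})\le\int q\,{\rm d}\mu=\int q\,\dw=1,
\]
so $\mu(\{x\})\le 1/\gamma_k<\alpha$. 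The same argument with $q$ shows more: since $q$ is continuous with $q(x)=\gamma_k$, for every $\varepsilon>0$ there are $r>0$ and $n_0$ such that $\sup_{y\in\s^2}\int_{B_r(y)}\rho_n\,\dw\le \frac{1}{\gamma_k}+\varepsilon<\alpha$ for all $n\ge n_0$ (uniform small-ball mass bound, using compactness of $\s^2$ and weak convergence).

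\textbf{Conclusion via a localized log-HLS.} We then prove the lower bound directly, without needing the structure of $\nu$. Cover $\s^2$ by finitely many balls $B_{r/2}(y_j)$ and take a partition into sets $U_j\subset B_{r/2}(y_j)$. Write $\rho_n=\sum_j\rho_{j,n}$ with $\rho_{j,n}=\rho_n\chi_{U_j}$ and split $\mathcal{E}(\rho_n)$ into diagonal and cross terms. For the diagonal terms, the classical log-HLS inequality applied to $\rho_{j,n}/m_j$ (mass scaling, as in Step 4) gives
\[
\frac1\alpha\sum_j\mathcal{E}(\rho_{j,n})\le \frac{\max_j m_j}{\alpha}\,\mathcal{H}(\rho_n)+C\le(1-\varepsilon')\,\mathcal{H}(\rho_n)+C.
\]
Cross terms between non-adjacent pieces are bounded, since $G$ is smooth away from the diagonal. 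Cross terms between adjacent pieces near the common boundary are the delicate part. The cleaner route is a cutoff argument: take a smooth partition of unity and use
\[
\int\!\!\int G\,\rho\rho\le \sum_j\int\!\!\int_{d<r/2}G\,(\phi_j\rho)(\omega)\rho(\eta)+C,
\]
then apply log-HLS to pairs restricted to balls $B_r(y_j)$, each of mass $<\alpha$. Concretely, $\int\!\!\int_{d(\omega,\eta)<r/2}(-2\ln d)\,\rho\rho\le\sum_j\int\!\!\int(-2\ln d)\,(\rho\chi_{B_r(y_j)})(\omega)\,(\rho\phi_j)(\eta)$, and this is bounded by polarized log-HLS / Cauchy--Schwarz for the positive-definite kernel $G$ (plus constants), giving $\le \max_j\|\rho\chi_{B_r(y_j)}\|_1\,\mathcal{H}(\rho)+C$. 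Hence $\mathcal{F}_\alpha(\rho_n)\ge\varepsilon'\mathcal{H}(\rho_n)-C$, a contradiction.

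The main obstacle is making this localization rigorous, i.e. controlling the interaction energy by (local mass)$\times$(entropy) with constant close to $1$. If that proves messy, the fallback is the Chang--Hang route: the concentration compactness of \cite[Proposition 2.4]{CH-2022} combined with the Jensen estimate of Step 3 shows the blow-up set of $\nu$ lies in atoms of $\mu$ of mass $\ge\alpha$, which by the key step is empty, contradicting $\sum\nu_i=1$.
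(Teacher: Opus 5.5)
Your key step is exactly the paper's Step 4: you test the moment constraints against $K_{[m/2]}(\cdot,x)^2\in\mathscr{P}_m$. Your fallback is exactly the paper's Steps 2--3: Gibbs for $e^{Mu_n}$, Chang--Hang concentration--compactness, and Jensen near points of small $\mu$-mass. One correction there: the threshold is $1/M$ for some $M\in(1/\alpha,\gamma_k)$ fixed before extracting $\nu$ (which depends on $M$), not $\alpha$ itself. The primary ``localized log-HLS'' route, however, has a gap exactly at the step you call delicate. Set $f_j=\rho\chi_{B_r(y_j)}$, $g_j=\phi_j\rho$, $m_j=\|f_j\|_1$ and $H(f)=\int f\ln f$. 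Cauchy--Schwarz for the positive semidefinite form $\iint G\,fg$, combined with mass-scaled log-HLS, only gives the per-piece bound $\iint G f_jg_j\le\big(m_jH(f_j)+C\big)^{1/2}\big(\|g_j\|_1H(g_j)+C\big)^{1/2}$. You never say how these bounds sum to $\max_j m_j\,\mathcal{H}(\rho)$, and this is where the overlap of the balls bites. The sum $\sum_j H(f_j)$ can be of order $N_0\,\mathcal{H}(\rho)$, where $N_0\ge 2$ is the overlap multiplicity of $\{B_r(y_j)\}$. Splitting each product by (weighted) AM--GM therefore yields at best the coefficient $\sqrt{N_0}\,\beta$, with $\beta=\sup_y\rho(B_r(y))$. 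That is useless, because $\alpha$ may be arbitrarily close to $1/\gamma_k\approx\beta$.

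The step can be repaired in either of two ways.
\begin{itemize}
\item Apply Cauchy--Schwarz over $j$, pairing $\|g_j\|_1$ with $H(f_j)$ and $m_j$ with $H(g_j)$, working with the positive part of $\rho\ln\rho$. Since $\operatorname{supp}\phi_j\subset B_{r/2}(y_j)$, one has $\sum_j\|g_j\|_1\chi_{B_r(y_j)}\le\sup_y\rho(B_{3r/2}(y))$ and $\sum_jH(g_j)\le\mathcal{H}(\rho)$. This gives the coefficient $\big(\sup_y\rho(B_r(y))\,\sup_y\rho(B_{3r/2}(y))\big)^{1/2}+\varepsilon$.
\item Use the asymmetric Gibbs--Jensen bound $\iint(-2\ln d)\,f(\omega)g(\eta)\le\lambda^{-1}\int g\ln g+C$, valid for $\lambda\|f\|_1<1$. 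It charges only the entropy of $g_j$, so there is no overlap loss.
\end{itemize}

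The cleanest completion drops the partition altogether. Your small-ball bound holds for every $\rho\in\Gamma^m_0$, not just asymptotically, because $\int q_y\rho\,{\rm d}\omega=\int q_y\,{\rm d}\omega=1$ exactly. Hence $\rho(B_r(y))\le(\gamma_k-\delta(r))^{-1}$, with $\delta(r)\to0$ uniformly in $y$ by rotation invariance. Let $u_\rho=G\star(\rho-1)$. Run the Jensen estimate of Step 3 over the whole sphere, padding the local measure up to mass $(\gamma_k-\delta(r))^{-1}$ to cope with balls of small mass. This gives $\int_{\s^2}e^{Mu_\rho}\,{\rm d}\omega\le C$ for all admissible $\rho$ once $M<\gamma_k-\delta(r)$. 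Gibbs then gives $\mathcal{E}(\rho)\le\frac1M\mathcal{H}(\rho)+C$, and choosing $1/\alpha<M<\gamma_k$ and then $r$ small finishes the proof. This direct route is quantitative and avoids both the contradiction argument and Chang--Hang's concentration--compactness, on which the paper's proof relies.
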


\begin{proof}
    \textbf{Step 1:}  We proceed as before via contradiction. Suppose that there exists a sequence $\{ \rho_n \}$ in $\Gamma^m_0$ such that $\mathcal{F}_{\alpha}(\rho_n) \rightarrow -\infty$ and set $u_n=\int_{\mathbb{S}^2}G \star (\rho_n - 1)$ and $\bar u_n=0$. Following the same argument, we can conclude that $\mathcal{H}(\rho_n), \mathcal{E}(\rho_n) \rightarrow \infty$. \\

    \textbf{Step 2} $\colon$ By  Gibbs Variational Principle applied to $Mu_n,$ where $M > \frac{1}{\alpha}$ will be fixed later, we have
    \begin{align*}
        \text{ln} \int_{\mathbb{S}^2}e^{Mu_n} \geq \left( M- \frac{1}{\alpha} \right)m_{n}^2 \ .
    \end{align*}
  Proceeding exactly as in \cite{CH-2022}  we conclude
    \begin{align*}
        w_n \rightharpoonup w \text{ in } H^1(\mathbb{S}^2), \ \vert \nabla_{\s^2} w_n \vert^{2} \overset{\star}{\rightharpoonup} \vert \nabla_{\s^2} w \vert^2 + \sigma\text{ in measure}.
    \end{align*}
    and
    \begin{align*}
        \frac{e^{Mu_n}}{\int_{\mathbb{S}^2}e^{Mu_n}} \overset{\star}{\rightharpoonup} \nu= \sum_{i=1}^{N} \nu_{i}\delta_{x_i} \ \text{ in measure}, \ \nu_{i} \geq 0, \ \sum_{i=1}^{N} \nu_{i}=1,
    \end{align*}
    where the blow-up set is characterized by
        \begin{align*}
        \{x_1,x_2 \cdots, x_N\} = \left\{ x \in \mathbb{S}^2 \colon \sigma( \{x \}) \geq \frac{4}{M^2} \left(M - \frac{1}{\alpha} \right) \right\}.
    \end{align*}

    \textbf{Step 3} $\colon$ Let $\rho_n \overset{\star}{\rightharpoonup} \mu$ \text{in measure}. Repeating the same arguments of Theorem \ref{proof alpha half}, Step 3, we conclude  
    \begin{align}\label{blowupmass}
        \text{supp}(\nu) \subseteq \left\{ \omega \in \mathbb{S}^2 \colon \  \mu(\{\omega\}) \geq \frac{1}{M} \right\}. 
    \end{align}
Moreover, $\int_{\s^2} p \ {\rm d}\mu = 0$, for every $p \in \mathring{\mathscr{P}}_m.$

\medskip

    \textbf{Step 4} Now we claim that $\mu(\{\omega\}) \leq \frac{1}{\gamma_{\left [ \frac{m}{2} \right]}}$, for all $\omega \in \s^2$ and for every $\mu$ satisfying such moment constraints.
    For any fixed $p \in \mathbb{S}^2$, since $K_m(\cdot , p) \in \mathscr{P}_m$, it implies $K_m(\cdot ,p)^2 \in \mathscr{P}_{2m}$.
    
     For simplicity of notation we denote $K(\cdot,p) := K_{\left [ \frac{m}{2} \right]}(\cdot, p) \in \mathscr{P}_{\left[ \frac{m}{2} \right]}$. Now assume that $\mu$ has a Dirac mass at $p$. Then 
    \begin{align*}
        \int_{\mathbb{S}^2}K(x,p)^2 \ {\rm d}\mu > \mu(\{p\}) K(p,p)^2  = \mu(\{p\}) (\gamma_{\left[ \frac{m}{2} \right]})^2.
    \end{align*}
    On the other hand, since $\int_{\s^2} p \ {\rm d}\mu = 0$, for every $p \in \mathring{\mathscr{P}}_m,$ and $K(x,p)^2 - \int_{\mathbb{S}^2}K(\omega,p)^2 \ {\rm d}\omega \in  \mathring{\mathscr{P}}_m,$
    \begin{align*}
        \int_{\mathbb{S}^2}K(x,p)^2 \ {\rm d}\mu = \int_{\mathbb{S}^2}K(\omega,p)^2 \ {\rm d}\omega = K(p,p)= \gamma_{\left[ \frac{m}{2} \right]}.
    \end{align*}
  Comparing the last two inequalities we get
    \begin{align*}
        \mu(\{p\}) \gamma_{\left[ \frac{m}{2} \right]}^2 < \gamma_{\left[ \frac{m}{2} \right]}, \ \ \mbox{i.e.}, \ \ 
     \mu(\{p\}) < \frac{1}{\gamma_{\left[ \frac{m}{2} \right]}}.
    \end{align*}
    We fix M such that  $\alpha > \frac{1}{M} \geq \frac{1}{\gamma_{\left[ \frac{m}{2} \right]}}$, we conclude that the blow up set \eqref{blowupmass} is empty.  This contradiction proves $\F_{\alpha}$ is bounded from below for $\alpha > \frac{1}{\gamma_{\left[ \frac{m}{2} \right]}}.$
\end{proof}

\section{Constrained Wasserstein flow} \label{Wflow section}

In this section, we first formally introduce the Wasserstein gradient flow under constraint, following the seminal work of Otto \cite{Otto}. We need a little digression to introduce the essential terminologies on optimal transport. 

\subsection{Optimal Transport and the Wasserstein Distance} 
Let $\mathcal{P}(\mathbb{R}^d)$ be the space of probability measures on $\mathbb{R}^d$, $\mathcal{P}_2(\mathbb{R}^d)$ be the subset of measures with finite second moments, and $\mathcal{P}_{\tiny{ac},2}(\mathbb{R}^d)$ be the subset of absolutely continuous measures with respect to the Lebesgue measure.

 Given a measurable map $T: \mathbb{R}^d \to \mathbb{R}^d$, we say that $T$ transports $\mu$ onto $\nu$, denoted by $\nu = T_{\#}\mu$, if for any measurable set $B \subset \mathbb{R}^d$, $\nu(B) = \mu(T^{-1}(B))$. Equivalently, for any bounded continuous function $\zeta \in C_b^0(\mathbb{R}^d)$:

\begin{align*}
\int_{\mathbb{R}^d} \zeta(T(x)) \, d\mu(x) = \int_{\mathbb{R}^d} \zeta(y) \, d\nu(y).
\end{align*}

The 2-Wasserstein distance between $\mu, \nu \in \mathcal{P}_2(\mathbb{R}^d)$ is defined by the Kantorovich relaxation:
\begin{align*}
W_2^2(\mu, \nu) := \min_{\theta \in \Pi(\mu, \nu)} \int_{\mathbb{R}^d \times \mathbb{R}^d} \vert{}x - y\vert{}^2 \, d\theta(x, y)
\end{align*}

where $\Pi(\mu, \nu)$ represents the set of all transference plans (joint probability measures on $\mathbb{R}^d \times \mathbb{R}^d$ with marginals $\mu$ and $\nu$). When $\mu \in \mathcal{P}_{\tiny{ac},2}(\mathbb{R}^d)$, Brenier's theorem \cite{Brenier}  states that the optimal transference plan is unique and given by the gradient of a convex function: $\theta_{\mbox{\tiny{opt}}} = (\text{I}_{\mathbb{R}^d} , \nabla\varphi)_{\#}\mu$. In other words, the Wasserstein distance can be expressed as:
\begin{align*}
 W_2^2(\mu, \nu) = \inf_{\nu=T_{\#}\mu} \int_{\mathbb{R}^d} \vert{}x - T(x)\vert{}^2 \, d\mu(x) = \int_{\mathbb{R}^d} \vert{}x - \nabla\varphi(x)\vert{}^2 \, d\mu(x). 
\end{align*}

A highly useful property of the $W_2$ distance is its ability to control averages. If $\zeta$ is a Lipschitz function with Lipschitz constant $\mbox{Lip}(\zeta)$, then for any $\mu, \nu \in \mathcal{P}_2(\mathbb{R}^d)$:

\begin{align} \label{Lip control}
\left\vert{} \int_{\mathbb{R}^d} \zeta(x) \, d\mu - \int_{\mathbb{R}^d} \zeta(x) \, d\nu \right\vert{} \leq \mbox{Lip}(\zeta) \, W_2(\mu, \nu).
\end{align}
See \cite[Lemma 2.4]{JKO2} for a proof. For a comprehensive theory of optimal transport, Wasserstein gradient flows and its applications to other fields of mathematics we refer to the monographs \cite{Villani03, Villani09, San, AGS} and more recent texts \cite{ABSOT, FigalliOT, MaggiOT} on this subject.

\medskip

Let $\F : \mathcal{P}(\mathbb{R}^d) \to \mathbb{R} \cup \{\pm \infty\}$ be a functional that is finite at least on a subset of $\mathcal{P}(\mathbb{R}^d),$ say $\F$ is finite on the set of all absolutely continuous measure with finite entropy and second moment.  Then the Wasserstein gradient flow of $\F$ is given by 
\begin{align*}
\partial_t \rho = \nabla \cdot (\rho \nabla \Phi), \ \ \ \ \Phi = \F^{\prime}(\rho).
\end{align*}
It is easy to check at a formal level that the energy $\F(\rho(t))$ decreases along the flow.
Now let $\bm{V} : \R^d \to \R^m$ be a vector field. We are interested in a flow that preserves the moment constraint 
\begin{align*}
\int_{\R^d} \bm{V}(x) \rho(x,t) \ {\rm d}x = 0,
\end{align*}
and the energy $\mathcal{F}$ decreases along the flow. To determine the associated flow, we introduce a time-dependent Lagrange multiplier: Let
\begin{align*}
\tilde \Phi = \Phi + \bm{\lambda}(t) \cdot \bm{V}(x) =  \F^{\prime}(\rho) + \bm{\lambda}(t) \cdot \bm{V}(x)
\end{align*}
and consider the associated flow
\begin{align*}
\partial_t \rho = \nabla \cdot (\rho \nabla \tilde\Phi).
\end{align*}
The $\bm{\lambda}(t)$ is chosen so that the moment constraint is preserved along the flow: 
\begin{align*}
0  = \frac{d}{dt} \int \int_{\R^d} \bm{V}(x) \rho(x,t) \ {\rm d}x = \int_{\R^d} \bm{V}(x) \partial_t\rho(x,t) \ {\rm d}x 
= \int_{\R^d}\bm{V}(x) \nabla \cdot (\rho \nabla \tilde\Phi) \ {\rm d}x.
\end{align*}
Writing, in components $\bm{V} = (V_1, \ldots, V_m),$ and $\bm{\lambda}(t) = (\lambda_1(t), \ldots, \lambda_m(t))$ for every $i \in \{1,\ldots,m\}$
\begin{align*}
0 &=  \int_{\R^d}V_i(x) \nabla \cdot (\rho \nabla \tilde\Phi) \ {\rm d}x \\
&= - \int_{\R^d}\rho \nabla V_i(x) \cdot \nabla \F^{\prime}(\rho) \ {\rm d}x  - \int_{\R^d}\rho \nabla V_i(x) \cdot \nabla (\bm{\lambda}(t) \cdot \bm{V}(x)) \ {\rm d}x \\
&= - \int_{\R^d}\rho \nabla V_i(x) \cdot \nabla \F^{\prime}(\rho) \ {\rm d}x  - \sum_{k = 1}^m \lambda_k(t) \int_{\R^d} \rho \nabla V_i(x) \cdot \nabla V_k(x) \ {\rm d}x.
\end{align*}
We define the $m \times m$ symmetric matrix $\mathcal{A}[\rho(t)]$ and the column vector  $\bm{b}[\rho(t)]$ (surpassing the dependance of $t$ in the integral) by 
\begin{align*}
\mathcal{A}[\rho(t)]_{ik} = \int_{\R^n} \rho \nabla V_i(x) \cdot \nabla V_k(x) \ {\rm d}x, \ \ \mbox{and} \ \ \bm{b}[\rho(t)]_i = \int_{\R^n}\rho \nabla V_i(x) \cdot \nabla \F^{\prime}(\rho) \ {\rm d}x.
\end{align*}
Assume that the matrix $\mathcal{A}[\rho(t)]$ is invertible for every $t >0,$ then the Lagrange multiplier is uniquely determined by
\begin{align*}
\bm{\lambda}(t) = - \mathcal{A}[\rho(t)]^{-1} (\bm{b}[\rho(t)]).
\end{align*}
For simplicity of notations, sometimes, we will drop time $t$ from the notations.
Therefore, the associated system we are interested in is the following:
\begin{align}
\begin{cases}
&\partial_t \rho = \nabla \cdot (\rho \nabla [ \F^{\prime}(\rho) + \bm{\lambda}(t) \cdot \bm{V}(x)]) \\
&\mathcal{A}[\rho] \bm{\lambda} = - \bm{b}[\rho],
\end{cases}
\end{align}
where 
\begin{align}\label{a and b}
\mathcal{A}[\rho]_{ij} = \int_{\R^d} \rho \nabla V_i \cdot \nabla V_j\ {\rm d}x, \ \ \ \bm{b}[\rho]_i = \int_{\R^d} \rho \nabla V_i \cdot \nabla \F^{\prime}(\rho) \ {\rm d}x.
\end{align}

\subsection{Energy dissipation inequality}
We claim that 
\begin{align*}
\frac{d}{dt} \F(\rho(t)) = - \int_{\R^n} \rho |\nabla \tilde \Phi|^2 \ {\rm d}x.
\end{align*}
Indeed, 
\begin{align*}
\frac{d}{dt} \F(\rho(t)) &= \int_{\R^d} \F^{\prime}(\rho) \partial_t \rho \ {\rm d}x \\
& =  \int_{\R^d} \F^{\prime}(\rho) \nabla \cdot (\rho \nabla \tilde\Phi)  \ {\rm d}x\\
& = - \int_{\R^d} \rho \nabla \F^{\prime}(\rho)  \nabla \tilde\Phi \ {\rm d}x \\
& =  - \int_{\R^d} \rho \nabla (\tilde \Phi - \bm{\lambda} \cdot \bm{V})  \cdot  \nabla \tilde\Phi \ {\rm d}x \\
& = - \int_{\R^d} \rho |\nabla \tilde \Phi|^2 \ {\rm d}x +  \int_{\R^d} \rho \nabla (\bm{\lambda} \cdot \bm{V}) \cdot \nabla \tilde \Phi \ {\rm d}x.
\end{align*}
On the other hand, 
\begin{align*}
\int_{\R^d} \rho \nabla (\bm{\lambda} \cdot \bm{V}) \cdot \nabla \tilde \Phi \ {\rm d}x &= \sum_{k = 1}^m \lambda_k(t)\int_{\R^d} \rho \nabla V_k \cdot \nabla (\F^{\prime}(\rho) + \bm{\lambda} \cdot \bm{V}) \ {\rm d}x \\
& =  \sum_{k = 1}^m \lambda_k(t)\int_{\R^d} \rho \nabla V_k \cdot \nabla \F^{\prime}(\rho) + \sum_{k=1}^m \sum_{j = 1}^m \lambda_j(t) \lambda_k(t) \int_{\R^d}  \rho \nabla V_i \cdot \nabla V_j\ {\rm d}x \\
& =  \bm{b}[\rho] \cdot \bm{\lambda} + \bm{\lambda}^{tr} \mathcal{A}[\rho] \bm{\lambda} = 0.
\end{align*}
This proves the energy dissipation inequality.

\subsection{A constrained Patlak-Keller-Segel system on $\mathbb{R}^2$} \label{pks subsection}
We now concentrate on the particular case of  the log-HLS energy functional on $\mathbb{R}^2$, and for simplicity, we remove the dependence of $\chi$ from the notations.
\begin{align}\label{loghls}
\F(\rho) &= \inte \rho(x) \ln \rho(x) \ {\rm d}x + \frac{\chi}{4\pi} \inte \inte \rho(x) \ln |x-y| \rho(y) \ {\rm d}x{\rm d}y \\
&=: \mathcal{H}(\rho) + \frac{\chi}{4\pi} \mathcal{W}(\rho), \notag
\end{align}
and the vector field we consider is the stereographic coordinates, 
\begin{align*}
\mathcal{S}(x) = \frac{1}{1 + |x|^2}\left(2x_1, 2x_2, |x|^2 - 1\right).
\end{align*}

Let $\Gamma$ denote the set
\begin{align*}
\Gamma := \left\{\rho \in L^1_+(\mathbb{R}^2) \ | \ \int_{\mathbb{R}^2} \rho\ln \rho \ {\rm d}x < \infty, \int_{\mathbb{R}^2} \rho \ {\rm d}x = 1, \int_{\mathbb{R}^2}|x|^2 \rho \ {\rm d}x<\infty\right\},
\end{align*}
and let 
\begin{align*}
\overset{\circ}{\Gamma} := \left\{\rho \in \Gamma \ | \ \int_{\mathbb{R}^2} \mathcal{S}(x)\rho(x) \ {\rm d}x = 0\right\}.
\end{align*}
By Theorem \ref{loghlsr2}, the functional $\F$ is bounded from below on $\overset{\circ}{\Gamma} $ if $2 \leq \frac{\chi}{4\pi} \leq 4,$ i.e. $8\pi \leq \chi \leq 16\pi.$ The associated Patlak-Keller-Segel system takes the form
\begin{equation}\label{mKS}
\begin{cases}
\partial_t \rho(x,t) = \Delta \rho - \chi \nabla \cdot \left[\rho(x,t) \nabla\left(u(x,t) + \frac{1}{\chi}\bm{\lambda}(t) \cdot \mathcal{S}(x)\right) \right] \\ 
u(x,t) := - \frac{1}{2\pi}\int_{\mathbb{R}^2}\ln |x - y| \rho(y,t) {\rm d}y\\ 
\mathcal{A}[\rho(t)](\bm{\lambda}(t)) = -\bm{b}[\rho(t)], \ \mbox{with initial data} \  \rho(x,0) = \rho_0(x) \in \overset{\circ}{\Gamma},
\end{cases}
\end{equation}
 where $\mathcal{A}[\rho]$ and $\bm{b}[\rho]$ are given by \eqref{a and b} with associated $\F$ given by \eqref{loghls} and $\bm{V}$ given by the stereographic projection. In particular,
\begin{align*}
\left(\mathcal{A}[\rho]\right)_{ij} = \int_{\mathbb{R}^2}\rho \nabla \mathcal{S}_i \cdot \nabla \mathcal{S}_j \ {\rm d}x. 
\end{align*}
It is easy to see that the matrix $\mathcal{A}[\rho]$
is positive definite for every $\rho \in \overset{\circ}{\Gamma}$. Indeed, lifting the integral to $\s^2,$ and writing  
$\varrho(\omega) = \rho(\mathcal{S}^{-1}(\omega))\pi(1 + |\mathcal{S}^{-1}(\omega)|^2)^2$ we can rewrite the integral as (up to a constant multiple)
\begin{align}\label{the A rho}
\mathcal{A}[\rho] &= \int_{\s^2} (I_{\R^3} - \omega \otimes \omega)(1 - \omega_3)^2 \varrho(\omega){\rm d}\omega.
\end{align}
Clearly, $\mathcal{A}[\rho]$ is a positive definite matrix as $\rho \geq 0$ and its integral is equal to $1$. In addition, if we have uniform entropy and second moment bound on $\rho$ (and hence uniform entropy bound on $\varrho$), then the matrix $\mathcal{A}[\rho]$ is uniformly bounded from below by $cI$ with $c>0$ (see Lemma \ref{a priori estimates}[d] below). Indeed, if for a sequence $\rho_m$, we have uniform entropy and moment bound, while one of the eigenvalue of  $\mathcal{A}[\rho_m]$ converging to zero, then up to a subsequence, $\rho_m$ converges weakly in $L^1$ to some $\rho$ with integral equal to $1.$ Since the matrix $\mathcal{A}[\rho]$ is weakly $L^1$ continuous, we get the desired contradiction.

The $\bm{b}[\rho]_i = \int_{\R^2} \rho \nabla \mathcal{S}_i \cdot \nabla \F^{\prime}(\rho) \ {\rm d}x,$ in this case is given by after a formal integration by parts, and using $\nabla u(x) = -\frac{1}{2\pi} \int_{\R^2} \frac{(x - y)}{|x - y|^2} \rho(y) \ {\rm d}y,$
\begin{align} \label{the b rho}
 \bm{b}[\rho]_i  &= \int_{\R^2} \rho \nabla \mathcal{S}_i \cdot \nabla (1 + \ln \rho - \chi u) \ {\rm d}x \notag\\
 &= \int_{\R^2}  \nabla \mathcal{S}_i \cdot \nabla  \rho - \chi \int_{\R^2} \rho \nabla \mathcal{S}_i \cdot \nabla  u \ {\rm d}x \notag\\
 &= -\int_{\R^2}   \rho\Delta \mathcal{S}_i   + \frac{\chi}{4\pi} \int_{\R^2} \int_{\R^2} (\nabla \mathcal{S}_i(x) - \nabla \mathcal{S}_i(y)) \cdot \frac{x - y}{\vert{}x - y\vert{}^2} \rho(x) \rho(y) \,{\rm d}x \,{\rm d}y.
\end{align}
The last line follows by using the symmetry in $x$ and $y$.
Since $\nabla \mathcal{S}_i(x)$ and $\Delta \mathcal{S}_i(x)$ are bounded, $\bm{b}[\rho]$ is well defined. 

\subsection{Weak formulation}

\begin{Def} \label{weak formulation}
For any initial data $\rho_0 \in \overset{\circ}{\Gamma}$ and $T^*>0$, we say that a non-negative  function $\rho \in (C([0,T^*); L^1_{\text{\tiny{weak}}}(\R^2))$ is a weak solution to the PKS-system \eqref{mKS} on the time interval $(0,T^*)$ if for all test function $\psi \in C_c^{\infty}(\R^2)$, $t \mapsto \int_{\R^2} \psi(x)\rho(x,t) \ {\rm d}x$ is absolutely continuous and
\begin{align*}
\frac{d}{dt} \int_{\R^2} \psi(x)\rho(x,t) \ {\rm d}x = &\int_{\R^2} \Delta \psi(x)\rho(x,t) \ {\rm d}x \\
& - \frac{\chi}{4\pi} \int_{\R^2}\int_{\R^2} (\nabla \psi(x) - \nabla \psi(y)) \cdot \frac{(x - y)}{|x - y|^2} \rho(x, t)\rho(y,t) \ {\rm d}x {\rm d}y\\
& + \int_{\R^2} \nabla \psi(x) \cdot \nabla (\bm{\lambda}(t) \cdot \mathcal{S}(x))\rho(x,t) \ {\rm d}x
\end{align*}
together with the conditions $\rho(x,0) = \rho_0(x)$ and $\mathcal{A}[\rho(t)] \bm{\lambda}(t) =  \bm{b}[\rho(t)]$ for all $t\in (0, T^*)$ hold, where $\mathcal{A}[\rho]$ and $\bm{b}[\rho]$ are given by \eqref{the A rho} and \eqref{the b rho} respectively. If $T^* = \infty$ we say that $\rho$ is a global weak solution of the system.
\end{Def}

\section{The constrained JKO scheme}\label{jko section}

As in the classical case, we consider the minimizing movement scheme or the JKO scheme, but with constraint: for a time step $\tau>0$, we define recursively
\begin{align} \label{JKO scheme}
\rho^n_{\tau}
\in \arg\min_{\rho \in  \overset{\circ}{\Gamma}}\Big\{
\mathcal{F}(\rho) + \frac{1}{2\tau}W_2^2(\rho,\rho^{n-1}_{\tau})
\Big\}, \ \ n \geq 1,
\end{align}
with $\rho_{\tau}^0 = \rho_0.$  If $\chi < 16 \pi,$ then by Lemma \ref{well defined} below, the JKO-scheme \eqref{JKO scheme} is well defined. The goal is the show that an appropriate interpolation of the $\rho_{\tau}^n$s converge to a weak solution of \eqref{mKS}. 

\medskip

\begin{Lem}[Existence of minimizers]\label{well defined}
\begin{itemize}
\item [(a)] Assume $\chi < 16\pi$, and fix $\bar{\rho} \in \overset{\circ}{\Gamma}$ and $\tau > 0$. Then, the functional 
\begin{align*}
 \rho \mapsto \mathcal{F}(\rho) + \frac{1}{2\tau} W_2^2(\rho, \bar{\rho})
\end{align*}
is bounded from below on $\overset{\circ}{\Gamma}$ and is sequentially lower semicontinuous with respect to the weak topology of $L^1(\mathbb{R}^2)$.
\item[(b)] If $\chi < 16\pi$, then all the sub-level sets $\{\mathcal{F}(\rho) + \frac{1}{2\tau}  W_2^2(\rho, \bar{\rho}) \leq C\}$ are sequentially precompact with respect to the weak topology of $L^1(\mathbb{R}^2)$. In particular, the minimization problem $\inf_{\rho \in \overset{\circ}{\Gamma}} \left(\mathcal{F}(\rho) + \frac{1}{2\tau}  W_2^2(\rho, \bar{\rho})\right)$ admits a solution.
\end{itemize}
\end{Lem}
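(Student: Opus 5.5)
The argument follows the standard route for JKO schemes applied to the Keller--Segel system, as in Blanchet--Calvez--Carrillo. The one new ingredient is that the sharp constrained inequality, Theorem \ref{loghlsr2}, is used in place of the classical log-HLS inequality. Since $\chi<16\pi$, we may choose $\alpha>\frac12$ with $\frac{\chi}{4\pi}\le \frac{2}{\alpha}-\varepsilon_0$ for some $\varepsilon_0>0$.

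\textbf{Lower bound with spare entropy.} I would split
\[
\F(\rho)=\theta\,\mathcal{H}(\rho)+(1-\theta)\Big[\mathcal{H}(\rho)+\tfrac{\chi}{4\pi(1-\theta)}\mathcal{W}(\rho)\Big],
\]
with $\theta>0$ small enough that $\frac{\chi}{4\pi(1-\theta)}\le \frac{2}{\alpha'}$ for some $\alpha'>\frac12$. Theorem \ref{loghlsr2} then bounds the bracket from below by $-C-(2-\frac{2}{\alpha'})\int\rho\ln(1+|x|^2)$. Because $\alpha'<1$, this correction term carries a negative coefficient of size at most $\frac{2}{\alpha'}-2<2$. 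It is controlled by the second moment, since $\ln(1+|x|^2)\le |x|^2$, or better by $\delta|x|^2+C_\delta$.

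The moment itself is controlled by the transport term:
\[
\int|x|^2\rho\le 2W_2^2(\rho,\bar\rho)+2\int|x|^2\bar\rho.
\]
Taking $\delta$ small relative to $1/\tau$, part of $\frac{1}{2\tau}W_2^2$ absorbs the logarithmic correction.

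The remaining piece $\theta\mathcal{H}(\rho)$ may be negative. The classical estimate $\mathcal{H}(\rho)\ge -C-\int\rho\ln(1+|x|^2)\cdot c$, obtained via Carleman/Jensen against $e^{-|x|^2}$, gives $\int\rho\ln\rho\ge -C-\int|x|^2\rho$. This is again absorbed by the transport term after shrinking $\theta$. The outcome is a bound of the form
\[
\F(\rho)+\tfrac{1}{2\tau}W_2^2\ge \theta'\int\rho|\ln\rho|+c\int|x|^2\rho - C.
\]

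\textbf{Lower semicontinuity and compactness.} On a sublevel set, the estimate above gives uniform bounds on $\int\rho|\ln\rho|$ and on the second moment. By Dunford--Pettis this yields weak $L^1$ precompactness and tightness, which is part (b).

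The constraint set is closed under this convergence. Mass is preserved by tightness, and $\int\mathcal{S}\rho=0$ passes to the limit because $\mathcal{S}$ is bounded and continuous. Lower semicontinuity of $\mathcal{H}$ and of $W_2^2$ under weak $L^1$ convergence with tightness is classical, by convexity and Kantorovich duality.

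The main obstacle is $\mathcal{W}$, since $\ln|x-y|$ is unbounded in both directions. The plan is to truncate: write $\ln|x-y|=\min(\max(\ln|x-y|,-k),k)+r_k$. The truncated part is bounded and continuous, so it converges under weak convergence of $\rho\otimes\rho$. For the far-field tail, $\ln|x-y|\le \ln(1+|x|^2)+\ln(1+|y|^2)$, and this is uniformly small by the moment bound.

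The near-diagonal tail $\iint_{|x-y|<e^{-k}}|\ln|x-y||\rho\rho$ must also be made uniformly small. I would use Young's inequality as in Step 4 of Theorem \ref{proof alpha half}: $\rho(x)\ln\frac{1}{|x-y|}\le \beta^{-1}\rho\ln\rho+|x-y|^{-\beta}$ on the small set. This smallness needs equi-integrability, which holds here because the sublevel set carries a uniform entropy bound, not merely the constrained inequality.

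With these pieces, the direct method yields a minimizer in $\overset{\circ}{\Gamma}$, completing (a) and (b).
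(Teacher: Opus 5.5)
Your proposal is correct and follows the paper's route. The lower bound comes from Theorem \ref{loghlsr2}, with a spare fraction of entropy available because $\chi<16\pi$. Compactness and lower semicontinuity then follow from the classical Keller--Segel JKO argument that the paper cites and you spell out: entropy and moment bounds, Dunford--Pettis, and truncation of $\ln|x-y|$ with Young's inequality near the diagonal. There are two harmless slips. Since $\mathcal{W}$ has no sign, you should fix $\alpha'$ by the equality $\frac{\chi}{4\pi(1-\theta)}=\frac{2}{\alpha'}$, which is allowed because Theorem \ref{loghlsr2} holds for every $\alpha\ge\frac12$. Also, when $\alpha'<1$ the correction $(\frac{2}{\alpha'}-2)\int\rho\ln(1+|x|^2)\,{\rm d}x$ is nonnegative rather than negative, so it needs no absorption in that case.
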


The boundedness from below in (a) follows from Theorem \ref{loghlsr2} (for $\chi \leq 8\pi$ we use the classical log-HLS inequality). The remaining of the proof of the above lemma is classical, see for example in this particular context  \cite[Lemma 2.1 and Lemma 3.1]{JKO2}. This later parts require the hypothesis $\chi < 16\pi$.

\medskip

Next we present the a priori estimates on the interpolates, their interpolations and convergence results. These results are classical, so we only mention it in the following lemmas.

\medskip

We define the time interpolation of the discrete scheme by connecting each pair of consecutive densities $\rho_\tau^n$ and $\rho_\tau^{n+1}$ with the constant-speed Wasserstein geodesic on the interval $[n\tau,(n+1)\tau)$. Thus, for every
$t\in[n\tau,(n+1)\tau),$ let $\nabla\varphi^n$ denote the Brenier map (the optimal transport map) sending $\rho_\tau^n$ to $\rho_\tau^{n+1}$ \cite{Brenier}. Then,  we define
\begin{align} \label{time interpolates}
\rho_\tau(t) = \left(\frac{(n+1)\tau-t}{\tau}\,\mathrm{Id} + \frac{t-n\tau}{\tau}\,\nabla\varphi^n \right)_{\#}\rho_\tau^n.
\end{align}
This yields the constant-speed geodesic joining $\rho_\tau^n$ and $\rho_\tau^{n+1}$ in the Wasserstein space \cite{McCann}. In particular, $\rho_{\tau}(n\tau) = \rho_{\tau}^n$ and for $t\in[n\tau,(n+1)\tau]$
\begin{align*}
W_2\bigl(\rho_\tau^n,\rho_\tau(t)\bigr)
=
\frac{t-n\tau}{\tau}\,
W_2\bigl(\rho_\tau^n,\rho_\tau^{n+1}\bigr).
\end{align*}

\begin{Lem}[A priori estimates] \label{a priori estimates}
Let $\chi < 16 \pi$. Then for every $T \in (0, \infty),$ there exists a constant $C(T),$ such that for each $\tau \in (0, 1)$ and positive integers $N$ satisfying $N\tau \leq T$, the following estimates hold
\begin{itemize}
\item[(a)]
\begin{align*}
\frac{1}{2\tau} \sum_{n=1}^N W_2^2(\rho_\tau^n, \rho_\tau^{n-1}) + \sup_{1 \leq n \leq N}\int_{\R^2}|x|^2\rho_\tau^n \ {\rm d}x +  \sup_{1 \leq n \leq N}\int_{\mathbb{R}^2} \rho_{\tau}^n \vert{}\ln \rho_{\tau}^n\vert{} \ {\rm d}x \leq C(T).
\end{align*}
\item[(b)] For every $0 \leq t \leq N\tau \leq T,$ 
\begin{align*}
\sup_{0 \leq t \leq N\tau}\int_{\R^2}|x|^2\rho_\tau(t) \ {\rm d}x + \sup_{0 \leq t \leq N\tau}\int_{\mathbb{R}^2} \rho_{\tau}(t) \vert{}\ln \rho_{\tau}(t)\vert{} \ {\rm d}x \leq C(T).
\end{align*}
\item[(c)] There exists a constant $C_0$ depending only on (up to the third derivative of) $\mathcal{S}$ such that for every $t \in [0,T],$ and $k \in \mathbb{N}$ satisfying $t \in [k\tau, (k + 1)\tau],$ the following estimates hold
\begin{align*}
&\left|\int_{\R^2} \mathcal{S}(x)\rho_{\tau}(t) \ {\rm d}x\right| \leq C_0 W_2(\rho_{\tau}(t), \rho_{\tau}^k) \leq C_0\sqrt{\tau}, \\
&\left|\mathcal{A}[\rho_{\tau}(t)] - \mathcal{A}[\rho_{\tau}^k]\right| \leq C_0 W_2(\rho_{\tau}(t), \rho_{\tau}^k) \leq C_0\sqrt{\tau}, \\
&\left|\bm{b}[\rho_{\tau}(t)] - \bm{b}[\rho_{\tau}^k]\right| \leq C_0 W_2(\rho_{\tau}(t), \rho_{\tau}^k)\leq C_0\sqrt{\tau}.
\end{align*}
\item[(d)] For every $C_{\star} > 0,$ there exists $c_{\star}>0$ such that for every $\rho \in \overset{\circ}{\Gamma}$ satisfying uniform entropy and second moment bound bound $\mathcal{H}(\rho) \leq C_{\star}, \int_{\R^2}|x|^2 \rho(x) \ {\rm d}x \leq C_{\star}$ 
\begin{align*}
\mathcal{A}[\rho] \geq c_{\star} \mbox{I}_{\R^3},
 \end{align*}
holds, where $\mbox{I}_{\R^3}$ is the identity matrix in $\R^3$.
\end{itemize}
\end{Lem}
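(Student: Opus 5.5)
We establish the four parts of Lemma \ref{a priori estimates} in the order (a), (b), (d), (c). This follows the classical JKO argument of \cite{JKO, JKO2}, adapted to the constraint set $\overset{\circ}{\Gamma}$.

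For (a), we start from the minimality of $\rho^n_\tau$ in \eqref{JKO scheme}, tested against the admissible competitor $\rho^{n-1}_\tau\in\overset{\circ}{\Gamma}$. This gives
\begin{align*}
\mathcal{F}(\rho^n_\tau)+\frac{1}{2\tau}W_2^2(\rho^n_\tau,\rho^{n-1}_\tau)\le \mathcal{F}(\rho^{n-1}_\tau).
\end{align*}
Summing over $n$ telescopes to $\mathcal{F}(\rho^N_\tau)+\frac{1}{2\tau}\sum_{n\le N} W_2^2\le \mathcal{F}(\rho_0)$, and the same holds with any intermediate $n$ in place of $N$. To control the second moment, we use $W_2(\rho^n_\tau,\delta_0)\le W_2(\rho_0,\delta_0)+\sum_{k\le n} W_2(\rho^k_\tau,\rho^{k-1}_\tau)$ together with Cauchy--Schwarz. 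This yields
\begin{align*}
\Big(\int|x|^2\rho^n_\tau\Big)^{1/2}\le C+\sqrt{N}\Big(\sum W_2^2\Big)^{1/2}\le C+\sqrt{2T}\,\big(\mathcal{F}(\rho_0)-\mathcal{F}(\rho^N_\tau)\big)^{1/2}.
\end{align*}
So the whole bound reduces to a lower bound on $\mathcal{F}$ that degrades at most linearly in the second moment. This is the main obstacle. We cannot simply quote Theorem \ref{loghlsr2}: for $\chi/4\pi=2/\alpha$ with $\alpha\in(1/2,1)$, it bounds $\mathcal{F}$ only after adding $(2-\frac{2}{\alpha})\int\rho\ln(1+|x|^2)$. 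That correction term is negative, so it is bounded below by $-C\ln(1+M_2)$ using Jensen's inequality, where $M_2=\int|x|^2\rho$. Since $\chi<16\pi$, we choose $\alpha$ with $\chi/(4\pi)<2/\alpha$ strictly. Writing $\mathcal{F}_\chi=\theta\mathcal{H}+(1-\theta)(\ldots)$ with $\theta>0$ and applying Theorem \ref{loghlsr2} to the second piece gives
\begin{align*}
\mathcal{F}(\rho)\ge \theta\,\mathcal{H}(\rho)-C\ln(1+M_2)-C.
\end{align*}
Combined with the moment estimate above, this gives a closed inequality of the form $M_2\le C+C\ln(1+M_2)$, hence $M_2\le C(T)$. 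The entropy is then bounded above by $\mathcal{F}(\rho_0)+C\ln(1+M_2)+C$. For $\int\rho|\ln\rho|$, we control the negative part of $\rho\ln\rho$ in the standard way by $\int\rho|x|^2+C$, comparing against $e^{-|x|^2}$ on the set $\{\rho<1\}$.

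For (b), we use that along the geodesic interpolation \eqref{time interpolates} the second moment is controlled by those of the endpoints, by convexity of $|x|^2$ along the transport. The entropy is displacement convex, so $\mathcal{H}(\rho_\tau(t))\le\max(\mathcal{H}(\rho^n_\tau),\mathcal{H}(\rho^{n+1}_\tau))$ by McCann \cite{McCann}. The negative part of the entropy is handled as in (a).

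For (d), we argue by contradiction, as sketched after \eqref{the A rho}. Suppose $\rho_m$ satisfy the entropy and moment bounds while $\xi_m^{T}\mathcal{A}[\rho_m]\xi_m\to0$ for unit vectors $\xi_m$. Dunford--Pettis gives weak $L^1$ compactness with no mass loss, thanks to the moment bound. The bounded continuous integrand in $\mathcal{A}$ then passes to the limit, producing $\rho$ with $\int\rho=1$ and $\xi^{T}\mathcal{A}[\rho]\xi=0$. On the sphere, this forces $\varrho$ to concentrate where $(I-\omega\otimes\omega)\xi=0$, i.e.\ at $\omega=\pm\xi/|\xi|$, or at the north pole because of the weight $(1-\omega_3)^2$. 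That is a null set, which contradicts absolute continuity.

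For (c), $\mathcal{S}_i$, $\nabla\mathcal{S}_i\cdot\nabla\mathcal{S}_j$ and $\Delta\mathcal{S}_i$ are Lipschitz, so \eqref{Lip control} handles the first estimate (using $\int\mathcal{S}\rho^k_\tau=0$), the second estimate, and the linear part of $\bm{b}$. For the interaction part of $\bm{b}$, the kernel $(\nabla\mathcal{S}_i(x)-\nabla\mathcal{S}_i(y))\cdot\frac{x-y}{|x-y|^2}$ is bounded and Lipschitz on $\R^2\times\R^2$, by Taylor expansion and bounds on up to third derivatives of $\mathcal{S}$. We apply \eqref{Lip control} on the product space to the coupling $\theta\otimes\theta$, which gives a bound of $C\,W_2(\rho_\tau(t),\rho_\tau^k)$. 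Finally, $W_2(\rho_\tau(t),\rho_\tau^k)\le W_2(\rho^{k+1}_\tau,\rho^k_\tau)\le\sqrt{2\tau C(T)}$ by (a).
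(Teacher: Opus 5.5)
\textbf{Verdict.} For (a), (b) and (d) you follow the paper's route, but part (c) rests on a false claim about the interaction kernel, and the closing step of (a) has a fixable gap.

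\textbf{Where you agree with the paper.} You test the scheme against $\rho^{n-1}_\tau$ and telescope. You split $\mathcal{F}_\chi=\frac{\chi}{16\pi}\mathcal{F}_{16\pi}+(1-\frac{\chi}{16\pi})\mathcal{H}$. You use McCann's displacement convexity for the interpolants, and you prove (d) by compactness. Your (d) is more complete than the paper's, since you explain why $\xi^{T}\mathcal{A}[\rho]\xi=0$ would force $\rho$ onto a null set.

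\textbf{The gap in (c), third estimate.} The kernel $K_i(x,y)=(\nabla\mathcal{S}_i(x)-\nabla\mathcal{S}_i(y))\cdot\frac{x-y}{|x-y|^2}$ is bounded. It is \emph{not} Lipschitz, and not even continuous, across the diagonal.
\begin{itemize}
\item For $x=y+he$ with $|e|=1$, $K_i(x,y)\to e^{T}D^2\mathcal{S}_i(y)e$ as $h\to 0$, and this limit depends on the direction $e$. For example, $D^2\mathcal{S}_3(1,0)=\mathrm{diag}(-1,1)$.
\item Taylor expansion and third-derivative bounds control $\int_0^1 D^2\mathcal{S}_i(y+s(x-y))\,{\rm d}s$. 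They do not control the factor $\frac{x-y}{|x-y|}$. So \eqref{Lip control} on the product space does not apply.
\item No argument using only entropy and moment bounds can give $C_0W_2$. Put mass $m$ uniformly on a $\delta\times\delta^2$ rectangle at $(1,0)$, oriented along $e_1$ in one density and along $e_2$ in the other, over a common smooth background. Then $W_2\lesssim\sqrt{m}\,\delta$, while $\bm{b}_3$ differs by about $\frac{\chi}{2\pi}m^2$. Taking $m=1/\ln(1/\delta)$ keeps the entropy bounded and makes the ratio blow up.
\end{itemize}
What survives is weaker. From
\[
|K_i(x,y)-K_i(x',y')|\le C\,(|x-x'|+|y-y'|)+C\min\{1,(|x-x'|+|y-y'|)/|x-y|\}
\]
and the equi-integrability supplied by the entropy bound, you get a (logarithmic) modulus of continuity in $W_2$, uniform on the entropy/moment class. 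That is enough for the uniform convergence of $\bm{\lambda}_\tau$ used later. It is not enough for the Lipschitz estimate as stated.

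The paper's proof only says that (c) ``follows from \eqref{Lip control}'' and has the same defect. So you have not missed an idea the paper supplies, but the step as you wrote it is false.

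\textbf{First issue in (a): the sign in Theorem \ref{loghlsr2}.} Your reason for not quoting Theorem \ref{loghlsr2} directly has the sign backwards. For $\alpha\in[\frac12,1)$, the correction $(2-\frac{2}{\alpha})\int\rho\ln(1+|x|^2)$ is nonpositive on the left-hand side. Moving it across gives $\mathcal{F}_{8\pi/\alpha}\ge -C(\alpha)$ outright. This is how the paper uses $\alpha=\frac12$, i.e.\ $\inf_{\overset{\circ}{\Gamma}}\mathcal{F}_{16\pi}>-\infty$. A Jensen \emph{lower} bound on that correction term cannot yield a lower bound on $\mathcal{F}$. Your displayed inequality is true only because it is weaker than the correct one.

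\textbf{Second issue in (a): the closing step.} $\mathcal{H}(\rho^N_\tau)$ can be negative on $\R^2$. So the term $-\theta\mathcal{H}(\rho^N_\tau)$, with $\theta=1-\frac{\chi}{16\pi}$, survives in the moment inequality, and the closed inequality is not $M_2\le C+C\ln(1+M_2)$.
\begin{itemize}
\item The coefficient-one comparison with $e^{-|x|^2}$ that you invoke closes only when $4T\theta<1$, or after iterating over short time intervals.
\item The clean fix is $\mathcal{H}(\rho)\ge-\varepsilon M_2(\rho)-\ln(\pi/\varepsilon)$, which is the nonnegativity of the entropy relative to $\frac{\varepsilon}{\pi}e^{-\varepsilon|x|^2}$. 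Take $\varepsilon=(8T\theta)^{-1}$.
\item Then absorb the $M_2$ term. This is legitimate because $M_2(\rho^N_\tau)<\infty$ by admissibility, and it gives $M_2(\rho^N_\tau)\le 4M_2(\rho_0)+C(T)$.
\end{itemize}
The paper's ``easy consequence of the Wasserstein distance bound'' skips this same step. With this repair, (a), (b), (d) and the first two estimates of (c) go through.
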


\begin{proof} For (a), we simply test \eqref{JKO scheme} against $\rho_{\tau}^{n-1}:$  $\mathcal{F}(\rho_{\tau}^{n}) + \frac{1}{2\tau}W_2^2(\rho_{\tau}^{n},\rho^{n-1}_{\tau}) \leq \mathcal{F}(\rho_{\tau}^{n-1}),$ for $1 \leq n \leq N$. Adding all the terms, and using $\mathcal{F}$ is bounded from below on $\overset{\circ}{\Gamma}$ if $\chi = 16\pi,$ and denoting $\inf_{\overset{\circ}{\Gamma}} \mathcal{F}_{16\pi} = C_{\mbox{\tiny{log-HLS}}}(16\pi)$ we get 
\begin{align*}
\frac{\chi}{16\pi}C_{\mbox{\tiny{log-HLS}}}(16\pi) + \left(1 - \frac{\chi}{16\pi}\right)\int_{\R^2} \rho_{\tau}^{N} \ln \rho_{\tau}^{N} \ {\rm d}x + \frac{1}{2\tau} \sum_{n=1}^N W_2^2(\rho_\tau^n, \rho_\tau^{n-1}) \leq \mathcal{F}(\rho_0).
\end{align*}
The second moment bound is an easy consequence of the Wasserstein distance bound. Finally, the desired entropy bound bound follows from the Carleman estimate \cite[Lemma 2.6]{BD}:
\begin{align*}
\int_{\mathbb{R}^2} \rho \vert{}\log \rho\vert{} \, {\rm d}x \le \int_{\mathbb{R}^2} \rho \left( \log \rho + \vert{}x\vert{}^2 \right) {\rm d}x + 2 \log(2\pi)  + \frac{2}{e}.
\end{align*}
Part (b) follows from McCann's displacement convexity of the second moment and the entropy \cite{McCann} (see also Villani's monograph \cite[Theorem 5.15]{Villani03}). Part (c) follows from \eqref{Lip control}.

Finally, for part (d), we argue by contradiction. Suppose that the statement is false. Then there exists a sequence ${\rho_k}$ satisfying the uniform entropy and second-moment bounds such that one of the eigenvalue of $\mathcal{A}[\rho_k] \rightarrow 0.$ By compactness, up to passing to a subsequence, $\rho_k\rightharpoonup \rho$ weakly in $L^1$, where $\rho$ has finite entropy and finite second moment. By the weak $L^1$ continuity of $\mathcal{A}$, we obtain one of the eigenvalue of $\mathcal{A}[\rho] = 0$ which is impossible.
\end{proof}

\begin{Lem}[Convergence] \label{convergence}
 Let $\rho_0 \in \overset{\circ}{\Gamma}$ and $\chi < 16\pi, T>0$, and $\rho_{\tau}(t)$ be defined as in \eqref{time interpolates} for $t \leq T.$ Then
\begin{itemize}
\item[(a)]  For any $0 \leq s \leq t \leq T$, the curve $\rho_\tau$ satisfies the $1/2$-H\"older estimate in the Wasserstein space:
\begin{align*}
W_2(\rho_\tau(s), \rho_\tau(t)) \leq \sqrt{6 \left( \mathcal{F}(\rho_0) - \inf_{ n \in \mathbb{N}} \mathcal{F}(\rho_{\tau}^n) \right)} (t - s)^{1/2}.
\end{align*}

\item[(b)] Thus, after extracting a subsequence,
\begin{align*}
\rho_\tau \longrightarrow \rho \ \ \text{in } \  C\left([0, T]; L^1_{\text{\tiny{weak}}}(\mathbb{R}^2)\right), \ \ \mbox{as} \ \tau \rightarrow 0.
\end{align*}
In particular, for every $\psi \in C_{\text{b}}(\mathbb{R}^2)$,
\begin{align*}
\sup_{t \in [0, T]} \left\vert{} \int_{\mathbb{R}^2} \psi(x) \left( \rho_\tau(x, t) - \rho(x,t) \right) {\rm d}x \right\vert{} \longrightarrow 0, \ \ \mbox{as} \ \tau \rightarrow 0.
\end{align*}

\item[(c)] For every fixed $t \in [0, T]$, the weak $L^1$ convergence $\rho_\tau(t) \rightharpoonup \rho(t)$ implies convergence of $\rho_\tau(t) \otimes \rho_\tau(t) \rightharpoonup \rho(t) \otimes \rho(t)$ weakly in $L^1(\mathbb{R}^{2} \times \R^2),$
in the sense that
\begin{align*}
\int_{\mathbb{R}^{2}} \int_{\mathbb{R}^{2}}\Psi(x, y) \rho_\tau(x,t) \rho_\tau(y,t) \, {\rm d}x \, {\rm d}y \longrightarrow \int_{\mathbb{R}^{2}}\int_{\mathbb{R}^{2}} \Psi(x, y) \rho(x,t) \rho(y,t) \, {\rm d}x \, {\rm d}y
\end{align*}
for every bounded continuous function $\Psi \in C_{\text{b}}(\mathbb{R}^{2} \times \mathbb{R}^{2})$ as $\tau \rightarrow 0$.  Moreover, since the convergence of $\rho_\tau$ is uniform in $t$, this convergence is also uniform in time:
\begin{align*}
\sup_{t \in [0, T]} \left\vert{}\int_{\mathbb{R}^{2}} \int_{\mathbb{R}^{2}}\Psi(x, y) (\rho_\tau(x,t) \rho_\tau(y,t) - \rho(x,t) \rho(y,t)) \, {\rm d}x \, {\rm d}y \right\vert{} \longrightarrow 0.
\end{align*}

\item[(d)]  In addition, the following convergences
\begin{align*}
\int_{\R^2} \mathcal{S}(x)\rho_{\tau}(t) \ {\rm d}x \rightarrow \int_{\R^2} \mathcal{S}(x)\rho(t)  \ {\rm d}x , \ 
\mathcal{A}[\rho_{\tau}(t)] \rightarrow \mathcal{A}[\rho(t)], \ 
\bm{b}[\rho_{\tau}(t)]\rightarrow \bm{b}[\rho(t)],
\end{align*}
hold uniformly in $t \in [0,T],$ as $\tau \rightarrow 0.$ 
In particular,   $\int_{\R^2} \mathcal{S}(x)\rho(t)  \ {\rm d}x = 0,$ and
\begin{align*}
\mathcal{A}[\rho_{\tau}(t)]^{-1}\bm{b}[\rho_{\tau}(t)] \rightarrow \mathcal{A}[\rho(t)]^{-1}\bm{b}[\rho(t)] \ \ \ \mbox{in} \ C[0,T].
\end{align*}
\end{itemize}
\end{Lem}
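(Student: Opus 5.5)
The plan is to follow the classical JKO convergence argument (as in Jordan--Kinderlehrer--Otto and Blanchet--Calvez--Carrillo), using the a priori bounds of Lemma \ref{a priori estimates} throughout; the constraint only adds Lipschitz terms in $W_2$, controlled by Lemma \ref{a priori estimates}(c).

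\emph{Part (a).} Let $s\in[k\tau,(k+1)\tau]$ and $t\in[l\tau,(l+1)\tau]$. Along each geodesic segment the metric speed is constant, equal to $W_2(\rho_\tau^n,\rho_\tau^{n+1})/\tau$. Split $W_2(\rho_\tau(s),\rho_\tau(t))$ with the triangle inequality into two partial segments plus full segments, and apply Cauchy--Schwarz:
\begin{align*}
W_2(\rho_\tau(s),\rho_\tau(t)) \le \Big(\sum_{n} \tfrac{1}{\tau}W_2^2(\rho_\tau^n,\rho_\tau^{n+1})\Big)^{1/2}\,\big(\text{total time} \big)^{1/2}.
\end{align*}
The telescoped JKO inequality $\frac{1}{2\tau}\sum_n W_2^2(\rho_\tau^{n},\rho_\tau^{n-1})\le \mathcal F(\rho_0)-\inf_n\mathcal F(\rho_\tau^n)$ bounds the first factor. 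Treating the partial segments separately (when $t-s<\tau$, or with two end pieces) accounts for the constant $6$. Finiteness of $\inf_n\mathcal F(\rho_\tau^n)$ follows from Theorem \ref{loghlsr2} together with $\chi<16\pi$.

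\emph{Part (b).} Apply a refined Arzel\`a--Ascoli theorem (Ambrosio--Gigli--Savar\'e, Prop.~3.3.1). For each $t$, the family $\{\rho_\tau(t)\}$ has uniformly bounded entropy and second moment by Lemma \ref{a priori estimates}(b). It is therefore uniformly integrable (de la Vall\'ee Poussin) and tight, hence weakly $L^1$ precompact by Dunford--Pettis. Part (a) gives equicontinuity in $W_2$, up to an error vanishing as $\tau\to0$; weak $L^1$ precompactness and this equicontinuity then yield a subsequence converging in $C([0,T];L^1_{weak})$. The main subtlety is that Arzel\`a--Ascoli must be run for the weak topology, which is metrizable on these bounded sets, for instance by a $W_1$- or $W_2$-type distance, since second moments are bounded. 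The extension to $\psi\in C_b$ uses tightness to reduce from $C_c$ to $C_b$.

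\emph{Part (c).} For $\Psi=\psi_1\otimes\psi_2$ the claim follows directly from (b). For general $\Psi\in C_b$, use tightness of the product measures together with Stone--Weierstrass on compacts: linear combinations of tensor products are uniformly dense on $K\times K$, and the uniform tails control the complement. Uniformity in $t$ follows since all bounds are uniform.

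\emph{Part (d).} $\mathcal S$ and the entries of $\mathcal A$ are bounded continuous integrands, so (b) applies. For $\bm b$, the first term involves $\Delta\mathcal S_i\in C_b$. The kernel $(\nabla\mathcal S_i(x)-\nabla\mathcal S_i(y))\cdot\frac{x-y}{|x-y|^2}$ is bounded because $\nabla\mathcal S_i$ is Lipschitz, but it is discontinuous on the diagonal. This is the main obstacle. I would handle it with a cutoff: on $\{|x-y|\le\delta\}$, use the bound on the kernel and the fact that the product measure of $\{|x-y|<\delta\}$ is uniformly small, by uniform integrability from the entropy bound; away from the diagonal, apply (c). Constraint preservation then follows from Lemma \ref{a priori estimates}(c): $|\int\mathcal S\rho_\tau(t)|\le C_0\sqrt\tau\to0$, since $\rho^k_\tau\in\overset{\circ}{\Gamma}$. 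Lower semicontinuity gives $\rho(t)$ bounded entropy and second moment, so Lemma \ref{a priori estimates}(d) gives $\mathcal A\ge c_\star I$ uniformly for both $\rho_\tau(t)$ and $\rho(t)$. Inversion is Lipschitz on $\{A\ge c_\star I\}$, so $\mathcal A^{-1}\bm b$ converges uniformly in $t$.
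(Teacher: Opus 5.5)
Your proposal is correct and is essentially the classical JKO/Blanchet--Calvez--Carrillo argument that the paper simply cites for this lemma: Cauchy--Schwarz on the piecewise-constant metric speed, Arzel\`a--Ascoli from the uniform entropy and second-moment bounds, a diagonal cutoff for the bounded but discontinuous kernel in $\bm{b}$, and Lipschitz inversion of $\mathcal{A}$ (your argument in (a) in fact gives the constant $2$ in place of $6$). One small point: for $t\notin\tau\mathbb{N}$ the interpolant $\rho_\tau(t)$ need not lie in $\overset{\circ}{\Gamma}$, so Lemma \ref{a priori estimates}(d) does not literally apply to it. This is harmless: its proof never uses the constraint, and alternatively the uniform convergence $\mathcal{A}[\rho_\tau(t)]\to\mathcal{A}[\rho(t)]\geq c_\star \mbox{I}_{\R^3}$ gives $\mathcal{A}[\rho_\tau(t)]\geq \frac{c_\star}{2}\mbox{I}_{\R^3}$ for all small $\tau$.
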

\begin{proof}
For a proof of (a), we refer to \cite[(equation (3.5))]{JKO2}.  The convergence are all classical, we refer to \cite{JKO, JKO2} for this particular context.
\end{proof}

\subsection{The first variation formula}
Given $\rho_{\tau}^n \in \overset{\circ}{\Gamma}$ and let $\rho_{\tau}^{n+1} \in \overset{\circ}{\Gamma}$ be defined as in the JKO-scheme \eqref{JKO scheme}. For simplicity of notations, in this subsection we denote $\rho_{\tau}^{n+1}$ by $\rho.$
 
 \medskip
 
Given a test function $\psi \in C_c^{\infty}(\R^2),$ define the (transport) map:
\begin{equation}
T_{\varepsilon,\bm{a}}(y) = y+\varepsilon\nabla\psi(y)
+\sum_{j=1}^3a_j\nabla \mathcal{S}_j(y),
\end{equation}
where $\bm{a}=(a_1,a_2,a_3)\in\mathbb R^3$ will be chosen below, depending on $\varepsilon$, such that 
$\rho_{\varepsilon} := (T_{\varepsilon, \bm{a}(\varepsilon)})_\#\rho$ is an admissible competitor for every $\varepsilon$ sufficiently small.  For $\delta >0$, define the smooth map $F: (-\delta, \delta) \times \R^3 \to \R^3,$ by
\begin{equation}
F(\varepsilon,\bm{a})
:=
\int_{\mathbb R^2}
\mathcal{S}(T_{\varepsilon, \bm{a}}(y))\rho(y) \,{\rm d}y.
\end{equation} 
Since $\rho$ is admissible, $F(0,0)=0$. The differential of this constraint map with respect to $\bm{a} = (a_1,a_2,a_3)$ at $(0,0)$ has entries:
\begin{align*}
\left. \frac{\partial F_i}{\partial a_j} \right|_{(0,0)} = 
\int_{\mathbb R^2}
\nabla S_i(y)\cdot\nabla S_j(y)\rho(y) \,{\rm d}y = \mathcal{A}[\rho]_{ij},
\end{align*}
which is invertible. Then the implicit function theorem gives a $C^1$-function
$\bm{a}:(-\delta,\delta)\to\mathbb R^3$ such that $\bm{a}(0)=0,$ and $F(\varepsilon,\bm{a}(\varepsilon))=0.$ Therefore $\rho_\varepsilon
:=
(T_{\varepsilon,\bm{a}(\varepsilon)})_\#\rho
\in \overset{\circ}{\Gamma}$ for every sufficiently small $\varepsilon$. Differentiating $F(\varepsilon, \bm{a}(\varepsilon)) = 0$ with respect to $\varepsilon$ and evaluating at $\varepsilon=0$ yields:
\begin{equation}
\int_{\mathbb R^2} D\mathcal{S} \, \nabla\psi\rho \,{\rm d}y
+
\mathcal{A}[\rho] \bm{a}^{\prime}(0) = 0,
\end{equation}
and hence we obtain
\begin{align} \label{aprime}
\bm{a}^{\prime}(0)=-\mathcal{A}[\rho]^{-1} \left(\int_{\mathbb R^2} D\mathcal{S} \, \nabla\psi\rho \,{\rm d}y\right).
\end{align}
Note that $\bm{a}^{\prime}(0)$ is linear in $\psi$.
We also denote the vector field $\xi(y) := \left. \frac{d}{d\varepsilon} \right|_{\varepsilon=0} T_{\varepsilon,\bm{a}(\varepsilon)}(y)$ 
which is given by
\begin{align}\label{effective vector field}
\xi(y) = \nabla\psi(y) - \sum_{j=1}^3 \bm{a}^{\prime}_j(0)\nabla \mathcal{S}_j(y),
\end{align}
and hence, it is also linear in $\psi.$

\medskip

\noindent
{\bf{First variation of the entropy:}}
Since for $\varepsilon$ small, the map $T_{\varepsilon,\bm{a}(\varepsilon)} : \mathbb{R}^2 \to \R^2$ is a $C^1$ diffeomorphism, the classical change-of-variables formula gives
\begin{align*}
\int\rho_\varepsilon\log\rho_\varepsilon = 
\int\rho\log\rho - \int\rho\log\det DT_{\varepsilon,\bm{a}(\varepsilon)}.
\end{align*}
Differentiating this and evaluating at $\varepsilon=0$ gives
\begin{equation}
\left. \frac{d}{d\varepsilon} \right|_{\varepsilon=0} \int\rho_\varepsilon\log\rho_\varepsilon = 
-\int_{\mathbb R^2}
\operatorname{div}\xi \, \rho \,{\rm d}x.
\end{equation}
Plugging in \eqref{effective vector field}, we get
\begin{align} \label{entropy 1st var}
\left. \frac d{d\varepsilon} \right|_{\varepsilon=0} \int\rho_\varepsilon\log\rho_\varepsilon
={}&
-\int \Delta\psi\rho \,{\rm d}y + \sum_{j=1}^3\bm{a}_j'(0) \int\Delta \mathcal{S}_j\rho \,{\rm d}y.
\end{align}

\medskip

\noindent
{\bf{ First variation of the interaction energy.}}
By definition of the push forward measure $\rho_\varepsilon = (T_{\varepsilon,\bm{a}(\varepsilon)})_\#\rho$, we have
\begin{align*}
\mathcal{W}(\rho_\varepsilon) = \int_{\mathbb R^2}\int_{\mathbb R^2} \log\left| T_{\varepsilon,\bm{a}(\varepsilon)}(x) - T_{\varepsilon,\bm{a}(\varepsilon)}(y) \right| \rho(x)\rho(y) \,{\rm d}x \,{\rm d}y.
\end{align*}
Using \eqref{effective vector field} and expanding in Taylor series $T_{\varepsilon,\bm{a}(\varepsilon)}(x) = x + \varepsilon\xi(x) + o(\varepsilon)$, the kernel expands as
\begin{align*}
&\log\left| T_{\varepsilon,\bm{a}(\varepsilon)}(x) - T_{\varepsilon,\bm{a}(\varepsilon)}(y) \right| 
=\log\left|(x-y) + \varepsilon(\xi(x)-\xi(y)) + o(\varepsilon)\right| \\
= &\log|x-y| + \varepsilon \frac{(x-y)\cdot(\xi(x)-\xi(y))}{|x-y|^2} + o(\varepsilon).
\end{align*}
Therefore, we get
\begin{align*}
\left.\frac{d}{d\varepsilon}\right|_{\varepsilon=0} \mathcal{W}(\rho_\varepsilon) &= \int_{\mathbb R^2}\int_{\mathbb R^2} \frac{(x-y)\cdot(\xi(x)-\xi(y))}{|x-y|^2} \rho(x)\rho(y) \,{\rm d}x \,{\rm d}y. 
\end{align*}

Plugging the the expression of $\xi$ from \eqref{effective vector field}, we get

\begin{align}\label{first var W}
\left.\frac{d}{d\varepsilon}\right|_{\varepsilon=0} \mathcal{W}(\rho_\varepsilon) =& \int_{\mathbb R^2}\int_{\mathbb R^2} \frac{(x-y)\cdot(\nabla \psi(x)-\nabla \psi(y))}{|x-y|^2} \rho(x)\rho(y) \,{\rm d}x \,{\rm d}y \notag \\
& - \sum_{j = 1}^3 \bm{a}_j^{\prime}(0) \int_{\mathbb R^2}\int_{\mathbb R^2} \frac{(x-y)\cdot(\nabla \mathcal{S}_j(x)-\nabla \mathcal{S}_j(y))}{|x-y|^2} \rho(x)\rho(y) \,{\rm d}x \,{\rm d}y.
\end{align}

\medskip

\noindent
{\bf{ First variation of the Wasserstein distance.}}
Let $\nabla\varphi_{\#}\rho_{\tau}^n = \rho$ be the Brenier map, then the map $T_{\varepsilon,\bm{a}(\varepsilon)}\circ\nabla\varphi$ pushes $\rho_{\tau}^n$ forward to $\rho_\varepsilon$. Hence,
\begin{align*}
W_2^2(\rho_{\tau}^n,\rho_\varepsilon)
\leq
\int_{\mathbb R^2}
\left|
T_{\varepsilon,\bm{a}(\varepsilon)}(\nabla\varphi(x))-x
\right|^2
\rho_{\tau}^n(x) \,{\rm d}x.
\end{align*}
Again by Taylor series, $T_{\varepsilon,\bm{a}(\varepsilon)}(y) = y+\varepsilon\xi(y)+o(\varepsilon),$ we obtain
\begin{align} \label{W2 1st var}
&\left. \frac {d}{d\varepsilon} \right|_{\varepsilon=0} \frac{1}{2} \int \left| T_{\varepsilon,\bm{a}(\varepsilon)}(\nabla\varphi(x))-x
\right|^2 \rho_{\tau}^n(x) \,{\rm d}x \notag\\
 =&\int_{\mathbb R^2} \bigl(\nabla\varphi(x)-x\bigr) \cdot \xi(\nabla\varphi(x)) \rho_{\tau}^n(x) \,{\rm d}x, \notag\\
 =& \int (\nabla\varphi(x)-x) \cdot\nabla\psi(\nabla\varphi(x)) \rho_{\tau}^n(x) \,{\rm d}x  -\sum_{j=1}^3 \bm{a}_j'(0) \int (\nabla\varphi(x)-x)
\cdot\nabla S_j(\nabla\varphi(x)) \rho_{\tau}^n(x) \,{\rm d}x.
\end{align}

\subsection{The Euler-Lagrange equation}
Given $\rho_{\tau}^n \in  \overset{\circ}{\Gamma},$ let $\rho_{\tau}^{n+1} \in \overset{\circ}{\Gamma}$ be a minimizer, and let
 $\rho_{\varepsilon}$ is defined as above with $\rho$ replaced by $\rho_{\tau}^{n+1} $.
 Let $\rho^{n+1}_{\tau} = (\nabla\varphi^n)_\# \rho^n_{\tau}$ be the Brenier map. Then by minimality of $\rho_{\tau}^{n+1},$ we have
 \begin{align*}
 \mathcal{F}(\rho_{\varepsilon}) + \frac{1}{2\tau} W_2^2(\rho_{\varepsilon}, \rho_{\tau}^n) \geq  \mathcal{F}(\rho_{\tau}^{n+1}) + \frac{1}{2\tau} W_2^2(\rho_{\tau}^{n+1}, \rho_{\tau}^n),
 \end{align*}
 and hence,
\begin{align*}
\frac{\left(\mathcal{H}(\rho_{\varepsilon}) - \mathcal{H}(\rho_{\tau}^{n+1})\right)}{\varepsilon}  + \frac{\chi}{4\pi} \frac{\left(\mathcal{W}(\rho_{\varepsilon}) - \mathcal{W}(\rho_{\tau}^{n+1})\right)}{\varepsilon} + \frac{1}{2\tau} \frac{W_2^2(\rho_{\varepsilon}, \rho_{\tau}^n) - W_2^2(\rho_{\tau}^{n+1}, \rho_{\tau}^n)}{\varepsilon} \geq 0,
\end{align*}
for every $\varepsilon >0$ sufficiently small. Letting $\varepsilon \rightarrow 0$, using \eqref{entropy 1st var}, \eqref{first var W}, \eqref{W2 1st var}, and recalling every term is linear in $\psi$, we conclude
\begin{align} \label{elequation}
 &\ \int_{\mathbb R^2} (\nabla\varphi^n(x) - x) \cdot \nabla \psi(\nabla\varphi^n(x)) \, \rho_{\tau}^n(x) \,{\rm d}x \notag\\
 ={} & \tau \int_{\mathbb R^2} \Delta \psi(x) \rho_{\tau}^{n+1}(x) \,{\rm d}x  - \frac{\chi}{4\pi} \tau \int_{\mathbb R^2} \int_{\mathbb R^2} (\nabla \psi(x) - \nabla \psi(y)) \cdot \frac{x - y}{\vert{}x - y\vert{}^2} \rho_{\tau}^{n+1}(x) \rho_{\tau}^{n+1}(y) \,{\rm d}x \,{\rm d}y \notag \\
 & \, \, + \tau\sum_{j=1}^3 \boldsymbol{a}^{\prime}_j(0) (\bm{b}_{\tau}^{n+1})_j
 \end{align}
where $\bm{b}_{\tau}^{n+1} := ((\bm{b}_{\tau}^{n+1})_1 ,(\bm{b}_{\tau}^{n+1})_2 ,(\bm{b}_{\tau}^{n+1})_3 )$ is given by

\begin{align} \label{bexpression}
(\bm{b}_{\tau}^{n+1})_j :={}& -\int_{\mathbb R^2} \Delta \mathcal{S}_j(x) \rho_{\tau}^{n+1}(x) \,{\rm d}x \notag \\ & + \frac{\chi}{4\pi} \int_{\mathbb R^2} \int_{\mathbb R^2} (\nabla \mathcal{S}_j(x) - \nabla \mathcal{S}_j(y)) \cdot \frac{x - y}{\vert{}x - y\vert{}^2} \rho_{\tau}^{n+1}(x) \rho_{\tau}^{n+1}(y) \,{\rm d}x \,{\rm d}y \notag\\ & + \frac{1}{\tau} \int_{\mathbb R^2} (\nabla\varphi^n(x) - x) \cdot \nabla \mathcal{S}_j(\nabla\varphi^n(x)) \, \rho_{\tau}^n(x) \,{\rm d}x, \notag \\
& = \bm{b}[\rho_{\tau}^{n+1}]_j + O\left(\frac{1}{\tau} W_2^2(\rho_{\tau}^{n+1}, \rho_{\tau}^n)\right)
\end{align}
where recall $\bm{b}[\rho]$ is given by \eqref{the b rho} and the last term is of $ O\left(\frac{1}{\tau} W_2^2(\rho_{\tau}^{n+1}, \rho_{\tau}^n)\right)$ which follows from the constraints as described below.

Since $\rho^{n}_{\tau}, \rho^{n+1}_{\tau} \in  \overset{\circ}{\Gamma}$ and $\rho^{n+1}_{\tau} = (\nabla\varphi^n)_\# \rho^n_{\tau}$, their moment constraints give:
\begin{align*}
\int_{\mathbb{R}^2} \left[ \mathcal{S}_j(\nabla\varphi^n(x)) - \mathcal{S}_j(x) \right] \rho^{n}_{\tau} \, {\rm d}x = 0.
\end{align*}

Applying a second-order Taylor expansion to $\mathcal{S}_j(x)$ around $\nabla\varphi^n(x)$ and integrating against $\rho^{n}_{\tau}$ yields 
\begin{align*}
\int_{\mathbb R^2} (\nabla\varphi^n(x) - x) \cdot \nabla \mathcal{S}_j(\nabla\varphi^n(x)) \rho_{\tau}^n(x) \, {\rm d}x = O \left(\int_{\mathbb R^2} |\nabla\varphi^n(x) - x|^2 \rho^{n}_{\tau} \, {\rm d}x \right) = O\left(W_2^2(\rho_{\tau}^{n+1}, \rho_{\tau}^n)\right).
\end{align*}

We define, for $0 \leq t \leq T,$
\begin{align*}
\bm{\lambda}_{\tau}(t) = -\mathcal{A}[\rho_{\tau}(t)]^{-1}\bm{b}[\rho_{\tau}(t)].
\end{align*}

Note that by Lemma \ref{a priori estimates}(a)(d), and \eqref{bexpression} 
\begin{align*}
\tau \mathcal{A}[\rho_{\tau}^{n+1}]^{-1} \left(\bm{b}_{\tau}^{n+1} - \bm{b}[\rho_{\tau}(t)]\right) = O\left(W_2^2(\rho_{\tau}^{n+1}, \rho_{\tau}^n)\right).
\end{align*}

By symmetry of  $\mathcal{A}[\rho_{\tau}^{n+1}]$,  we can simplify the last term in \eqref{elequation} as follows:
\begin{align} \label{bexpression1}
 \tau\sum_{j=1}^3 \boldsymbol{a}^{\prime}_j(0) (\bm{b}_{\tau}^{n+1})_j
 &= \tau\left( -\mathcal{A}[\rho_{\tau}^{n+1}]^{-1} \int_{\mathbb{R}^2} D\mathcal{S}(x) \nabla\psi(x) \rho_{\tau}^{n+1}(x) \, {\rm d}x \right) \cdot \bm{b}_{\tau}^{n+1} \notag\\ 
 &= - \tau\left( \int_{\mathbb{R}^2} D\mathcal{S}(x) \nabla\psi(x) \rho_{\tau}^{n+1}(x) \, {\rm d}x \right) \cdot \left( \mathcal{A}[\rho_{\tau}^{n+1}]^{-1} \bm{b}_{\tau}^{n+1} \right)\notag \\ 
 &= - \tau\left( \int_{\mathbb{R}^2} D\mathcal{S}(x) \nabla\psi(x) \rho_{\tau}^{n+1}(x) \, {\rm d}x \right) \cdot \Big[-\bm{\lambda}_{\tau}((n+1)\tau)\notag \\
 & \hspace{7 cm} + O\Big(\frac{1}{\tau} W_2^2(\rho_{\tau}^{n+1}, \rho_{\tau}^n)\Big)\Big]\notag \\ 
 &=  \tau\sum_{j=1}^3 (\bm{\lambda}_{\tau}((n+1)\tau))_j \int_{\mathbb{R}^2} \nabla \mathcal{S}_j(x) \cdot \nabla \psi(x) \rho_{\tau}^{n+1}(x) \, {\rm d}x \notag\\
 & \hspace{8 cm} + O\left(W_2^2(\rho_{\tau}^{n+1}, \rho_{\tau}^n)\right)\notag \\ 
 &=  \tau\int_{\mathbb{R}^2} \left( \sum_{j=1}^3 ((\bm{\lambda}_{\tau}((n+1)\tau))_j  \nabla \mathcal{S}_j(x) \right) \cdot \nabla \psi(x) \rho_{\tau}^{n+1}(x) \, {\rm d}x \notag\\
 & \hspace{8 cm} + O\left(W_2^2(\rho_{\tau}^{n+1}, \rho_{\tau}^n)\right)\notag\\ 
 &= \tau\int_{\mathbb{R}^2} \nabla(\bm{\lambda}_{\tau}((n+1)\tau) \cdot \mathcal{S}(x)) \cdot \nabla\psi(x) \rho_{\tau}^{n+1}(x) \, {\rm d}x + O\left(W_2^2(\rho_{\tau}^{n+1}, \rho_{\tau}^n)\right).
\end{align}

\medskip

\noindent
{\bf Proof of Theorem \ref{main4}:}

\begin{proof}
By Taylor’s expansion:
\begin{align}
\psi(\nabla\varphi^n(x)) - \psi(x) =  (\nabla\varphi^n(x) - x) \cdot \nabla\psi \circ \nabla\varphi^n(x) + O\left(\vert{}x - \nabla\varphi^n(x)\vert{}^2\right)
\end{align}
 and hence
we can recast the left-hand side of \eqref{elequation} as:
\begin{align} \label{elequation1}
& \int_{\mathbb R^2} (\nabla\varphi^n(x) - x) \cdot \nabla \psi(\nabla\varphi^n(x)) \, \rho_{\tau}^n(x) \,{\rm d}x \notag\\
=& \left( \int_{\mathbb{R}^2} \psi \circ \nabla\varphi^n(x) \rho^n_\tau(x) \, {\rm d}x - \int_{\mathbb{R}^2} \psi(x) \rho^n_\tau(x) \, {\rm d}x \right) + O\left( \int_{\mathbb{R}^2} \vert{}x - \nabla\varphi^n(x)\vert{}^2 \rho^n_\tau(x) \, {\rm d}x \right) \notag \\ 
=& \left( \int_{\mathbb{R}^2} \psi(x) (\rho^{n+1}_\tau(x) \, {\rm d}x -  \rho^n_\tau(x)) \, {\rm d}x \right) + O\left( W^2_2(\rho^n_\tau, \rho^{n+1}_\tau) \right).
\end{align}
Combining \eqref{elequation}, \eqref{bexpression1}, and \eqref{elequation1}
\begin{align} \label{elequation2}
 &\int_{\mathbb{R}^2} \psi(x) (\rho^{n+1}_\tau(x) - \rho^n_\tau(x) )  {\rm d}x + O\left( W^2_2(\rho^n_\tau, \rho^{n+1}_\tau) \right)  \notag  \\ 
 =& \tau \int_{\mathbb{R}^2}  \Delta\psi(x) \rho^{n+1}_\tau(x) \, {\rm d}x - \frac{\chi}{4\pi} \tau\int_{\mathbb{R}^2}\int_{\mathbb{R}^2} (\nabla\psi(x) - \nabla\psi(y)) \cdot \frac{(x - y)}{\vert{}x - y\vert{}^2} \rho^{n+1}_\tau(x) \rho^{n+1}_\tau(y) \, {\rm d}x \, {\rm d}y 
 \notag \\
 & +\tau\int_{\mathbb{R}^2} \nabla(\bm{\lambda}_{\tau}((n+1)\tau) \cdot \mathcal{S}(x)) \cdot \nabla\psi(x) \rho_{\tau}^{n+1}(x) \, {\rm d}x.
\end{align}

Thanks to \eqref{Lip control}, for $n\tau \leq t \leq (n+1)\tau,$ and $\zeta \in C_c^{\infty}(\R^2)$
\begin{align*}
\tau \int_{\R^2} \zeta(x) \rho_{\tau}^{n+1}(x) \ {\rm d}x = \int_{n\tau}^{(n+1)\tau} \int_{\R^2} \zeta(x) \rho_{\tau}(x,t) \ {\rm d}x \ {\rm d}t + O(W_2(\rho_{\tau}^{n+1}, \rho_{\tau}^{n})).
\end{align*}
Similarly, for $n\tau \leq t \leq (n+1)\tau,$ Lemma \ref{a priori estimates}[(c)] gives
\begin{align*}
\bm{\lambda}_{\tau}((n+1)\tau) = \bm{\lambda}_{\tau}(t) + O(W_2(\rho_{\tau}^{n+1}, \rho_{\tau}^{n})).
\end{align*}

Let $0 \leq t_1 < t_2 \leq T$ and denote $m_1 = [\frac{t_1}{\tau}] + 1, m_1 = [\frac{t_2}{\tau}],$ then summing \eqref{elequation2} over $n = m_1, \ldots, m_2,$ the bordering integrals $t_1$ to $m_1\tau$ and $m_2\tau$ to $t_2$ being negligible, we deduce
\begin{align*}
&\int_{\mathbb{R}^2} \psi(x) (\rho_\tau(x, t_2) - \rho_\tau(x,t_1) )  {\rm d}x   \notag  \\ 
 =& \int_{t_1}^{t_2} \int_{\mathbb{R}^2}  \Delta\psi(x) \rho_{\tau}(x, t) \, {\rm d}x dt \\
 &  - \frac{\chi}{4\pi} \int_{t_1}^{t_2}\int_{\mathbb{R}^2}\int_{\mathbb{R}^2} (\nabla\psi(x) - \nabla\psi(y)) \cdot \frac{(x - y)}{\vert{}x - y\vert{}^2} \rho_\tau(x,t) \rho_\tau(y,t) \, {\rm d}x \, {\rm d}y \, dt
 \notag \\
 & +\int_{t_1}^{t_2}\int_{\mathbb{R}^2} \nabla(\bm{\lambda}_{\tau}(t) \cdot \mathcal{S}(x)) \cdot \nabla\psi(x) \rho_{\tau}(x,t) \, {\rm d}x \, dt + O(\sqrt{\tau}),
\end{align*}
together with the constraint
\begin{align*}
\bm{\lambda}_{\tau}(t) = - \mathcal{A}[\rho_{\tau}(t)]^{-1}\bm{b}[\rho_{\tau}(t)].
\end{align*}

Thanks to Lemma \ref{convergence}, we can pass to the limit in the above equations, showing that the limit $\rho(x, t)$ is a weak solution to \eqref{mKS} in the sense of Definition \ref{weak formulation}. This concludes the proof.
\end{proof}

\medskip

\medskip

 \noindent
{\bf Acknowledgement.}  D. Karmakar acknowledges the support of the Department of Atomic Energy, Government of India,
under Project Identification No. RTI 4014 and ANRF ARG-MATRICS grant ANRF/ARGM/2025/000976/MTR. M. Ghosh acknowledges the postdoctoral fellowship   provided  through ARG project  grant ANRF/ARG/2025/000572/MS DT.12.03.2026 under the mentorship of Professor Debdip Ganguly at Indian Statistical Institute, Delhi.
The authors gratefully acknowledge that this work was significantly enhanced during their participation in the in-person programs {\bf Geometric Analysis and PDE} [ICTS/GPDE/2026/02] and {\bf A School on Gradient Flows} [ICTS/FLOWS/2026/08], hosted by the International Centre for Theoretical Sciences (ICTS) in February and August–September 2026 respectively.

\medskip

\noindent
{\bf Disclosure Statement.} The authors report there are no competing interests to declare.

\medskip

\noindent
{\bf Data availability statement.} Data sharing not applicable to this article as no datasets
were generated or analysed during the current study.

\bibliographystyle{alpha}
\bibliography{Onofri.bib}

 \end{document}